\DeclareMathOperator*{\dist}{dist}
\DeclareMathOperator*{\supp}{supp}
\newcounter{smalllist}
\numberwithin{equation}{section}
\newtheorem{thm}{Theorem}
\newtheorem{rem}[thm]{Remark}
\newtheorem{lemma}[thm]{Lemma}
\newtheorem{claim}[thm]{Claim}
\newtheorem{estimate}[thm]{Estimate}
\newtheorem{defn}[thm]{Definition}
\newcommand{\bbr}{\mathbb R}
\newcommand{\bbz}{\mathbb Z}
\newcommand{\cala}{\mathcal A}
\newcommand{\calb}{\mathcal B}
\newcommand{\cald}{\mathcal D}
\newcommand{\cali}{\mathcal I}
\newcommand{\calj}{\mathcal J}
\newcommand{\calp}{\mathcal P}
\newcommand{\calr}{\mathcal R}
\newcommand{\calt}{\mathcal T}
\newcommand{\cals}{\mathcal S}
\newcommand{\calu}{\mathcal U}
\newcommand{\calc}{\mathcal C}
\newcommand{\one}{\mathbf{1}}
\newcommand{\boldc}{\mathbf{C}}
\newcommand{\dense}{\mathbf {dense} }
\newcommand{\size}{\mathbf {size} }
\newcommand{\udense}{\overline{\dense}}
\newcommand{\topp}{\mathbf {top} }
\newcommand{\beq}{\begin{equation}}
\newcommand{\eeq}{\end{equation}}
\newcommand{\beqa}{\begin{eqnarray*}}
\newcommand{\eeqa}{\end{eqnarray*}}
\newcommand{\beqan}{\begin{eqnarray}}
\newcommand{\eeqan}{\end{eqnarray}}
\begin{document}

\title[Single annulus $L^p$ estimates]{Single annulus $L^p$ estimates for Hilbert transforms along vector fields}
\author{Michael Bateman}
\address{Michael Bateman, UCLA}
\email{bateman@math.ucla.edu}

\begin{abstract}
We prove $L^p$, $p\in (1,\infty)$ estimates on the Hilbert transform along a one variable vector field acting on functions with frequency support in an annulus.  Estimates when $p>2$ were proved by Lacey and Li in \cite{LL1}.  This paper also contains key technical ingredients for a companion paper \cite{BT} with Christoph Thiele in which $L^p$ estimates are established for the full Hilbert transform.  
\end{abstract}

\maketitle

\tableofcontents

\section*{Acknowledgments} 
The author thanks Ciprian Demeter and Christoph Thiele for helpful discussions, and especially Christoph Thiele for making many comments on an early version of this paper.  The author also thanks Francesco Di Plinio for pointing out an important typo. The author was partially supported by NSF grant DMS-0902490.


\section{Introduction}

Let $v$ be a nonvanishing vector field that depends one variable, i.e., $v\colon \bbr ^2 \rightarrow \bbr ^2 \setminus \{0\}$ and $v(x_1, x_2)=v(x_1)$.  In this paper we prove $L^p$ estimates on the Hilbert transform along $v$ precomposed with frequency restriction to an almost-annular region.  More specifically, define
\beqa
H_v f(x) = p.v. \int {{f(x- t v(x) ) } \over t} dt.
\eeqa
Because of the structure of the Hilbert kernel, the magnitude of $v$ is irrelevant, provided it is nonzero.  For this reason we may assume that $v(x_1, x_2) = (1, u(x_1))$.  
We will further assume that the slope of $v$ is bounded by $1$.  This will be helpful for some technical reasons in this paper, but our main interest is in the action of 
$H_v$ on arbitrary functions (i.e., those not necessarily having frequency support in an annulus); in this more general case, the operator is invariant under dilations in the vertical variable.  See \cite{BT} for more on the symmetries of this problem.  This invariance allows us to assume, in that case, that the slope of $v$ is bounded by $1$.  (This is mostly a technical convenience, that allows us to think of rectangles and parallelograms as being the same kind of objects.)  Since this general problem is the primary motivation for this paper, we adopt the restriction on the slope here as well.  The general problem is addressed in a companion paper with Christoph Thiele \cite{BT}.  This paper is logically prior to the other, and is therefore self-contained.  Fix $w\geq 0$, and define $\tau$ to be the trapezoid with corners $(-{1\over w}, {1\over w}),$ $({1\over w}, {1\over w}),$ 
$(-{2\over w}, {2\over w}),$ and $({2\over w}, {2\over w})$.  Also define
\beqa
\widehat{\Pi _{\tau}  f }(\xi) = \one _{\tau} (\xi) \hat{f} (\xi).
\eeqa
Here we prove the following
\begin{thm}\label{main}
Let $v$ be a vector field depending on one variable with slope bounded by $1$.
Let $p\in (1,\infty)$.  Then
\beqa
||(H_v \circ \Pi _{\tau} ) f || _p \lesssim || \Pi _{\tau} f || _p .
\eeqa
\end{thm}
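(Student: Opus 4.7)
My approach follows the time-frequency framework of Lacey--Li \cite{LL1}, with a sharper tree-counting step that extends the range to all $p\in(1,\infty)$. First, since $\Pi_\tau f$ has Fourier support in the fixed trapezoid $\tau$, a Whitney decomposition of the cone of directions transverse to $v$ discretizes $H_v\circ\Pi_\tau$ as a model sum
\[
\Lambda(f,g) = \sum_{P\in\calp} \langle f,\varphi_P\rangle\langle g,\psi_P\rangle,
\]
indexed by parallelograms (tiles) $P = R_P\times\omega_P$, where $\varphi_P,\psi_P$ are $L^2$-normalized wave packets adapted to $P$, and the sum is restricted to tiles for which $v(x)\in\omega_P$ on a large fraction of $R_P$. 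The slope bound on $v$ lets one treat the spatial parallelograms essentially as axis-parallel rectangles, which is convenient for the geometric arguments below.

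Next, I organize $\calp$ into trees---collections of tiles whose frequency components share a common point---and attach to each tree $T$ a size
\[
\size(T) = \sup_{P\in T}\Bigl(\tfrac{1}{|R_P|}\sum_{P'\in T,\,\omega_{P'}\supset\omega_P}|\langle f,\varphi_{P'}\rangle|^2\Bigr)^{1/2}
\]
and a density $\dense(T) = |\{x\in\bigcup_T R_P\colon v(x)\in\omega_T\}|/|\bigcup_T R_P|$. A standard Bessel/orthogonality argument then yields a single-tree bilinear bound roughly of the form $|\Lambda_T(f,g)| \lesssim \size(T)\,\dense(T)^{1/2}\,\bigl|\bigcup_T R_P\bigr|\,\|g\|_\infty$, with $\|g\|_\infty$ replaced by appropriate $L^1$-based quantities in the duality step.

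Finally, I decompose $\calp$ greedily into layers of fixed size $\sigma$ and density $\delta$. A John--Nirenberg/orthogonality argument controls the total shadow of size-$\sigma$ layers by $\sigma^{-2}\|f\|_2^2$. The crucial new step is a Kakeya-type counting lemma that bounds the shadow of density-$\delta$ layers with a sharp, near-linear dependence on $\delta^{-1}$: here one exploits that $v$ depends on only one variable, so that the maximal $R_P$'s in a density-$\delta$ tree correspond to parallelograms whose slopes cluster in a short interval, and such configurations admit covering estimates in the spirit of C\'ordoba. Summing the bilinear contributions over $(\sigma,\delta)$-layers dyadically and optimizing yields a restricted weak-type $(p,p)$ bound for every $p>1$, hence Theorem \ref{main}. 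The main obstacle is precisely this density-counting lemma: the Lacey--Li argument used a weaker estimate, adequate only for $p>2$, and obtaining the correct near-linear rate in $\delta$ requires a delicate combinatorial argument that must fully leverage the one-variable structure of $v$ and iterate a John--Nirenberg-type selection scheme.
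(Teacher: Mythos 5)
Your outline reproduces the Lacey--Li framework (model sum over tiles, trees, size and density, a single-tree estimate, greedy selection into $(\sigma,\delta)$-layers), but the ingredient you identify as the crucial new step is not the one that pushes the range below $p=2$, and as described your plan does not close that gap. A density count with (near-)linear dependence on $\delta^{-1}$, i.e. $\sum_T|\topp(T)|\lesssim|E|/\delta$ for the density-$\delta$ layer, is already the standard density lemma and is already available in Lacey--Li; combined with the size bound $|F|/\sigma^2$ and the tree lemma $\delta\sigma|\topp(T)|$ (note the tree estimate is linear in the density, not $\dense^{1/2}$ as you wrote), the resulting sum $\sum_{\delta,\sigma}\delta\sigma\min(|E|/\delta,|F|/\sigma^2)$ is of order $\sum_{\delta\lesssim1}\sqrt{\delta|E||F|}\sim\sqrt{|E||F|}$, which is exactly the restricted weak-type $L^2$ bound and, together with $\size\lesssim1$, yields only $p\geq2$ (this is precisely the computation in the appendix). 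No sharpening of the $\delta$-dependence of a pure density count can do better, because for $p<2$ and $|F|\leq|E|$ the target $|E|^{1-1/p}|F|^{1/p}$ is strictly smaller than $\sqrt{|E||F|}$; so the bottleneck is not the rate in $\delta$ of the Kakeya/C\'ordoba-type count you propose.

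What is actually needed is a third, \emph{joint} estimate on the layers with $\udense\sim\delta$ and $\size\sim\sigma$, of the form $\sum_{T\in\calt_{\delta,\sigma}}|\topp(T)|\lesssim|F|^{1-\epsilon}|E|^{\epsilon}/(\delta\sigma^{1+\epsilon})$, i.e. a count with $|F|$ in the numerator and \emph{both} parameters in the denominator (Estimate \ref{maximal}). Its proof requires three ingredients your sketch does not supply: (i) the covering estimate from \cite{B1} for pairwise incomparable parallelograms satisfying simultaneously a vector-field density condition $|E\cap u^{-1}(\omega_R)\cap R|\geq\delta|R|$ and an $F$-mass condition $\frac{1}{|R|}\int_R\one_F\geq\lambda$, which gives $\sum|R|\lesssim|F|/(\delta\lambda^{1+\epsilon})$ --- this is where the one-variable structure is exploited, rather than in a sharper pure density count; (ii) a lemma converting tree size into $F$-mass on (a dilate of) the top, namely that $\size(T)\sim\sigma$ forces $|F\cap\sigma^{-\epsilon}\topp(T)|\gtrsim\sigma^{1+\epsilon}|\sigma^{-\epsilon}\topp(T)|$, which rests on localized $L^p$ and BMO/John--Nirenberg estimates for the tree square function; and (iii) an organizational step dealing with the fact that tiles are sorted by $\udense$, so the top of a selected tree may itself have small $\dense$ and cannot be fed directly to the covering lemma --- one must attach to each tree a genuinely dense parallelogram $R_T$ and sort the trees by how heavily their tops cover $R_T$. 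Without (i)--(iii), the scheme you describe terminates at the already-known range $p\geq2$.
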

We remark that the estimate in this theorem is independent of the parameter $w$ in the definition of $\tau$, which comes as no surprise given the dilation invariance of the problem.  Further, the restriction to a trapezoid specifically is nothing to take seriously.  Using the assumption on the slope of the vector field we can already assume $\supp \hat{f}$ lies in a two-ended cone near the vertical axis, because $H_v$ acts trivially on functions with support outside this cone.  More precisely, if $\hat{f}$ is support in a cone close to the horizontal axis, then we have with the constant vector field $(1,0)$:
\beq 
\label{honezero}
H_vf(x,y)=H_{(1,0)}f(x,y)\ \ ,
\eeq 
because $H_{(1,0)}$ is a multiplier corresponding to right and left half-planes.  But $H_{(1,0)}$ is trivially bounded, justifying our claim.  Finally, a trapezoid is the restriction of the cone to a horizontal frequency band.  We could have equally well stated the theorem for functions with support in the full band, and reduced it to the trapezoidal case.  Alternatively, we could have worked with an annular region, or an annular region intersected with a cone.  Our methods work equally well in these cases.  We chose the horizontal band (rather than an annulus) because of the special structure of one-variable vector fields, but for other vector fields an annular region may be more appropriate.

Perhaps the biggest contribution of this paper (aside from its applicability to \cite{BT}) is a more streamlined and mechanized collection of two-dimensional time-frequency tools.  Building heavily on important earlier work of Lacey-Li (see \cite{LL1} and \cite{LL2}), we clarify the relationship between the density-related maximal operators (see Lemma \ref{maximallemma}) and the more classical time-frequency tools.  Specifically, a key sublemma in \cite{B1}, combined with this more efficient understanding, allows us to obtain the full range of exponents $p \in (1, \infty)$ here.  Further, although the results are stated only for one-variable vector fields, it is clear how to combine a maximal theorem for a different vector field with the methods of this paper.  We should remark that time-frequency analysis in two-dimensions is rather less-well-developed than in one-dimension, with work of Lacey-Li being the only natural precursor to this paper.  We therefore strove to make the paper self-contained and to include proofs of a number of lemmas that are standard in one-dimension, but whose proofs in the two-dimensional situation do not seem to appear in the literature.  

\subsection{Related work}
Study of such problems is motivated by the obvious connection to the problem of estimating the Hilbert transform on functions that have not been Fourier-localized.  Stein, for example, conjectured that if $v$ is Lipschitz, then $H_v$ (or rather, a truncated version of it) is a bounded operator on $L^2$.  We note that when $v$ depends on only one variable, the $L^2$ boundedness of $H_v$ is a rather immediate consequence of Carleson's theorem, as shown in \cite{LL2}.  Stein's conjecture is the singular integral variant of Zygmund's well-known conjecture on the differentiation of Lipschitz vector fields.  For a fuller history, see \cite{LL2}.  More recently, Thiele and the author proved a range of $L^p$ estimates on the full Hilbert transform along a one variable vector field, using some key lemmas from the present paper.  It is known that the operator $H_v$ is related to the return-times theorem from ergodic theory; see \cite{BT} for more on this connection.

We remark that the operator $\boldc$ is quite similar to Carleson's operator (i.e., the maximal Fourier partial sum operator).  The argument in \cite{LL1} is also quite similar to the Lacey-Thiele proof of Carleson's theorem (see \cite{LT}).  The argument here draws on ingredients from \cite{LL1}, but obtaining $L^p$ estimates for $p<2$ in this situation requires more effort, partly because the relevant maximal operators are more complicated, but also because making use of the maximal theory is more complicated.  In the $1$-D situation, exceptional sets are unions of intervals; nothing so simple is the case here.

Theorem \ref{main} was proved for arbitrary vector fields when $p>2$ by Lacey and Li in \cite{LL1}.  (In fact, they proved a weak $L^2$ result.)  The same authors, in \cite{LL2}, introduced a method for obtaining $L^p$, $p<2$, estimates on $H_v \circ \Pi _{\tau} $ when a certain maximal theorem is available for the vector field $v$ in question.  (The story is a bit technical:  they proved a theorem contingent on the existence of this certain maximal theorem in the case of truncated Hilbert kernels.  However the method had little to do with the truncation of the kernel, allowing us to extend it here.)  The author proved such an $L^p$ maximal theorem when $v$ depends on one variable in \cite{B1}.  Given this result, it is not surprising that the method from \cite{LL2} yields a result for some $p<2$, but the value of $p$ obtained from the method in \cite{LL2} seemed far from sharp.  (At the very least, the method seemed nonsharp.  Of course, this was not important for the authors there.)  It was clear, for example that new ideas would be required to even reach $p$ close to ${3\over 2}$.  The author recently improved the estimates in this maximal theorem to (essentially) best possible in \cite{B2}.  Because of this, the author decided to investigate the precise range of $p$ for which Theorem \ref{main} holds.


\subsection{New ideas}
The novelties in this paper that allow us to obtain the full range of $p$ claimed in Theorem \ref{main} are
 a simplification of the approach in \cite{LL2}, and
   a more efficient appeal to the maximal theorems.

We elaborate a bit more on these points for readers already familiar with the argument in \cite{LL2}.

Regarding the first point:  In \cite{LL2}, tiles are sorted into trees via standard density and orthogonality (size) lemmas.  An important additional observation made in \cite{LL2} is that if $\calt$ is a collection of trees such that for each $T \in \calt $ the ``size" of $T$ is about $\sigma$ and the ``density" of the top of $T$ is about $\delta$, then we can control $\sum _{T\in \calt} |\topp (T)| $ by using an appropriate maximal theorem.  Their argument, however, requires an additional twist to handle trees with large size whose tiles have density $\sim \delta$, but whose tops have density much less than $\delta$.  Here we use an organization of the tiles that admits a more straightforward argument.  This organization is carried out in Section \ref{maximalsection}, which contains more discussion as well.  

Regarding the second point:  A rather simple observation allows us to appeal to a key ingredient in the proof of the maximal theorem, rather than the theorem itself.  This strengthens estimates on $\sum _{T\in \calt} |\topp (T)| $ for trees as mentioned in the last paragraph.  This observation allows us to obtain the full range of $p$.  This observation uses the proof of \cite{B1}, and hence does not even take advantage of the sharp $L^p$ estimates on the maximal operator obtained in \cite{B2}.  See Lemma \ref{maximallemma}.

\subsection{Organization of paper}
Readers familiar with time-frequency analysis, having a bit of faith, and wanting an executive summary should follow this outline:  Skip to the definition of the model operator in Section \ref{modeloperatorsection}.  Then (possibly after skimming Section \ref{defs} to review essentially standard definitions,) read Sections \ref{organizationsection}, \ref{lemmas}, and \ref{maximalsection}.  Those wanting to check the numerology should also read Section \ref{optimization}.  A comprehensive outline is below.

In Section \ref{modelreduction}, we reduce the theorem to an analogous one for a model operator.

In Section \ref{defs}, we present some key definitions needed for the organization of our set of tiles.  (Recall that the operators in question are model sums over tiles.)  

In Section \ref{organizationsection}, we make the main decomposition of the collection of tiles and state several key estimates that follow from the decomposition.

In Section \ref{lemmas}, we state the main lemmas needed to prove the estimates stated in Section \ref{organizationsection}.

In Section \ref{optimization}, we balance these various estimates to prove the main theorem.  There is no serious content here.

In Section \ref{densitysection},  we prove the density lemma, which estimates $\sum _{T\in \calt} |\topp (T)| $ for certain collections $\calt$ by using elementary covering ideas.

In Section \ref{maximalsection}, we prove the maximal estimate, which controls $\sum _{T\in \calt} |\topp (T)| $ for certain collections $\calt$ by using more sophisticated techniques in combination with $L^p$ and $BMO$-type estimates on a square function related to the ``projection" operator associated to trees.

In Section \ref{intersectionsection}, we compare the $\size$ of a tree to its intersection with the function in the definition of $\size$.

In Section \ref{treesection}, we prove the tree lemma, which controls the contribution to the model sum from one tree.  The proof mirrors that of the (more) classical one-dimensional tree lemma, with a small bit of extra work required to handle two-dimensional tail terms.

In Section \ref{sizesection}, we prove the size lemma, which estimates $\sum _{T\in \calt} |\topp (T)| $ for certain collections $\calt$ by using orthogonality.

In Section \ref{besselsection}, we prove a refined Bessel inequality that allows us to control tail terms in the size and tree lemmas, as well as in the proof of localized $L^p$ estimates for the square function mentioned above.

In Section \ref{sqfcnsection}, we prove localized (to the top of a tree) $L^p$ estimates for a square function associated to a tree.  Once again, we follow a relatively standard argument and appeal to the refined Bessel inequality to handle some two-dimensional technicalities.

In Section \ref{bmosection}, we prove that higher $L^p$ norms of the square function are controlled by lower $L^p$ ones by using standard $BMO$ techniques.

In the appendix, we recall the proof in \cite{LL1} of the $L^p$, $p>2$ case of our main theorem.


\section{Reductions } \label{modelreduction}

In this section we reduce the $L^p$ estimates in Theorem \ref{main} to restricted weak-type estimates on a model operator.  The model operator should look familiar to readers familiar with developments in time-frequency analysis from the last ten to fifteen years:  it is a sum over ``tiles" of wave packets.  The model operator arises from decomposing 
\begin{enumerate}
\item the Hilbert kernel  ${1\over t}$ into (smoothly cutoff) dyadic intervals on the frequency side; for technical reasons we make these annuli rather thin, resulting in two summation indices for the Hilbert kernel.  In fact, we actually decompose the projection operator onto positive frequencies, and write the Hilbert transform as a linear combination of this operator and the identity operator. 
\item  given any integer $l\geq 0$, $\hat{f}$ on $\tau$ into $\sim 2^{l}$ pieces; again, the ``$\sim$" here comes from another summation introduced to provide strict orthogonality between the various pieces.
\end{enumerate}


\subsection{Discretizing the kernel}

In this section we decompose the operator $H \circ \Pi _{\tau} $ into a sum of model operators.

We begin by selecting a Schwartz function $\psi _0 ^{(0)}$ such that $\psi_0 ^{(0)} $ is supported on $[{{98}\over {100}}, {{102}\over{100}}]$ and equal to $1$ on $[{{99}\over{100}}, {{101}\over{100}}]$ .  Let $\psi_l ^{(0)} (t)=  \psi ^{(0)} (2^{-l} t) $.  Now  define  $\psi ^{(0)} = \sum _{l\in \bbz} \psi ^{(0)} _l$.  By appropriately defining $\psi _0 ^{(i)} $ with similarly sized support, and defining $\psi _l ^{(i)} (t)=  \psi _0 ^{(i)} (2^{-l} t)$, we can construct a partition of unity for $\bbr ^+$; i.e. 
\beqa
\one_{(0, \infty)} = \sum _{i=0} ^{99} \psi ^{(i)}  .
\eeqa
This gives us the Hilbert kernel as a linear combination of $100$ model kernels and the identity.  
More precisely, let 
\beqa
H_l ^{(i)} g (x,y) = \int \check{\psi} _l ^{(i)} (t) g(x-t, y - t u (x)) dt.
\eeqa
Then writing $I$ for the identity operator, 
\beqa
c_1H \circ \Pi _{\tau} f (x,y) + I \circ \Pi _{\tau} f (x,y)&=&   c_2 \sum _{l\in \bbz} \sum _{i=0} ^{99} H_l ^{(i)} \circ \Pi _{\tau} f (x,y) .\\
\eeqa 
By the triangle inequality, we have 
\beqa
||H \circ \Pi _{\tau} f ||_p \lesssim ||I\circ \Pi _{\tau}f||_p +  \sum _{i=0} ^{99} ||H ^{(i)}  \circ \Pi _{\tau}f || _p ,
\eeqa
where $H^{(i)} = \sum _l H^{(i)} _l$.  We note that $H_l \circ \Pi _{\tau} f =0$ for $l\leq \log {1\over w} + c$ because of the Fourier support of the kernel of the operator $H_l$.

\subsection{Discretizing the function} \label{discfcn}

We next focus on discretizing the function $f$.  For $l\geq 0$, we write $\cald _l$ to denote the collection of dyadic intervals of length $2^{-l}$ contained in $[-2,2]$.
Fix a smooth positive function $\beta \colon \bbr \rightarrow \bbr$ such that $\beta (x) =1$ for $x\in [-1,1]$ and such that $\beta (x)=0$ when 
$|x|\geq 2$.  Also assume that $\sqrt{\beta}$ is a smooth function.  This point will become relevant for the definition of $\varphi$ immediately before Lemma \ref{fouriersummation}.  Now fix an integer $c$ (whose exact value is unimportant) and for each $\omega \in \cald_l$, define 
\beqa
\beta _{\omega} (x) = \beta (2^{l+c} (x - c_{\omega _1}) ),
\eeqa
where $\omega _1$ is the right half of $\omega $, and $c_{\omega_1}$ is the center of $\omega _1$.  Define 
\beqa
\beta _l (x) = \sum _{\omega \in \cald _l} \beta _{\omega } (x).
\eeqa
Note that 
\beqa
\beta_l (x+ 2^{-l}) = \beta _l (x)
\eeqa
for $x\in [-2,2-2^{-l}]$.  Now define 
\beqa
\gamma _l (x) = {1\over 2} \int_{-1} ^{1} \beta _l (x+ t) dt.
\eeqa
Because of the local periodicity mentioned above, we have that $\gamma _l (x) $ is constant for $x\in [-1,1]$; say $\gamma _{l} (x) = \delta $, where $\delta $ is a constant independent of $l$.  Hence
\beqa
{1\over \delta} \gamma _l (x) \one_{[-1,1]} (x) = \one_{[-1,1]} (x).
\eeqa
Define yet another multiplier $\tilde{\beta} \colon \bbr \rightarrow \bbr$ with support in $[{1\over 2}, {5\over 2}]$, and 
$\tilde{\beta} (x)= 1 $ for $x\in [1,2]$.  Just as $\gamma _l$ is an average over translates of $\beta _l$, so each $H^{(i)}$ is an average of model operators.  We define the corresponding multipliers on $\bbr ^2$:
\beqa
\widehat{m_{\omega}} (\xi, \eta) &=& \tilde{\beta} (\eta) \beta _{\omega} ({\xi \over \eta}) \\
\widehat{m_{l,t}} (\xi, \eta) &=& \tilde{\beta} (\eta) \beta _l (t+ {\xi \over \eta}) \\
\widehat{\frak{m}_l} (\xi, \eta) &=& \tilde{\beta}(\eta) \gamma _l ({\xi \over \eta}).
\eeqa
We know that for each $l$
\beqa
\frak{m}_l (\xi, \eta) \one _{\tau}(\xi, \eta) = \one _{\tau} (\xi, \eta)
\eeqa
for $(\xi, \eta) \in \tau$.  Note that for each $i$, 
\beqa
||H ^{(i)} (\Pi _{\tau}  \circ f) ||_p &=& ||\sum _l ( H_l ^{(i)} \circ \Pi _{\tau} ) ({1\over \delta} \frak{m}_l \ast f) ||_p \\
&=& ||{1\over 2 } \int_{-1} ^1 \sum _l ( H_l ^{(i)} \circ \Pi _{\tau} ) ( {1\over \delta} m_{l,t} \ast f) dt ||_p \\ 
&\leq &  {1\over 2 } \int_{-1} ^1   ||  \sum _l ( H_l ^{(i)} \circ \Pi _{\tau} )( {1\over \delta} m_{l,t} \ast f) || _p dt,
\eeqa
so it is enough to consider the discretized projections $m_{l,t}$.  In what follows, we will assume, without loss of generality, that 
$t=0=i$ and omit the dependence on $t$ and $i$.

\subsection{Constructing the tiles} \label{consttile}
For each $\omega \in \cald$ with $l\geq 0$, let $\calu _{\omega}$ be a partition of $\bbr ^2$ by parallelograms of width $w$ and 
length ${w\over |\omega|}$ whose long side has slope $\theta$, where $\tan \theta = c(\omega)$ and where $c(\omega)$ is the center of the 
interval $\omega$, and whose projection onto the $x$-axis is a dyadic interval.  We remark that $l<0$ need not be considered.  (See the remark immediately prior to Section \ref{discfcn}.  Note that the index $l$ plays a slightly different role there.)  Briefly, the parts of the Hilbert kernel whose frequency support is outside the interval $[-{1\over w} , {1\over w}] \subseteq \bbr$ ((i.e., $\psi _l$ for $l<\log {1\over w}$) have no interaction with our function $f$ whose frequency support is contained in the annulus of radius ${1\over w}$. Finally, let $\calu = \bigcup _{\omega \in \cald} \calu _{\omega}$.  If $s\in \calu _{\omega}$, we will write 
$\omega _s : = \omega $.

An element of $\calu $ is called a ``tile".  The following lemma, stated in essentially this form in \cite{LL1}, allows us to further discretize our operator into a sum over tiles. 
Let $R_{\omega}$ denote an element of $\calu _{\omega}$ containing the origin.  
Suppose $\varphi _{\omega}$ is such that $|\widehat{\varphi _{\omega} }|^2 = \widehat{m_{\omega}}$.  Note that $\varphi _{\omega}$ is smooth, by our assumption on the function $\beta$ mentioned above.  Further, each region 
\beqa
\{ (\xi, \eta) \colon {\xi \over \eta} \in \omega , \hspace{.2cm} \eta \in [1,2] \}
\eeqa
 can be obtained by a linear transformation of the trapezoid with corners $(-1,1), (1,1), (-2,2), (2,2)$, which ensures that the functions $\varphi _{\omega}$, with $\omega \in \cald := \cup _{l \geq 0} \cald _l$, satisfy uniform decay conditions.  To see this, consider the transformations
\beqa
A= \begin{pmatrix} M & 0 \\ 0 & M \end{pmatrix},
\eeqa
\beqa
B= \begin{pmatrix} \epsilon & 0 \\ 0 & 1 \end{pmatrix},
\eeqa
and
\beqa
C= \begin{pmatrix} 1 & \lambda \\ 0 & 1 \end{pmatrix}.
\eeqa
A composition of these three takes the trapezoid bounded by 
$(-1,1)$, $(1,1)$, $(-2,2)$, $(2,2)$ to the trapezoid bounded by 
$(M(\epsilon + \lambda), M)$, $(M(-\epsilon + \lambda), M)$, $(2M(\epsilon + \lambda), 2M)$, $(2M(-\epsilon + \lambda), 2M)$, which is precisely the area of support for $\varphi _{\omega}$ when $M$, $\epsilon$, and $\lambda$ are chosen appropriately.  
Define
\beqa
\varphi _s (p)&=&  \sqrt{|s|} \varphi _{  \omega } (p-c(s)).
\eeqa
Note that the functions $m_{\omega}$ are $L^{1}$ normalized, so the functions $\varphi _{s}$ are $L^2$ normalized.
\begin{lemma} \label{fouriersummation}
Using notation above, we have
\beqa 
f \ast   m_{\omega}  (x) =\lim _{N\rightarrow \infty} {1\over 4N^2} \int _{[-N, N]^2}
\sum_{s\in \calu _{\omega} }  \langle f , \varphi _s (p + \cdot ) \rangle \varphi _s (p + x) dp.
\eeqa
\end{lemma}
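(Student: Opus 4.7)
\smallskip

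\noindent\textbf{Proof proposal.}  The plan is to recognize the claimed identity as the continuous average of a discrete reproducing formula adapted to the lattice of tile centers.  First, since $|\widehat{\varphi_\omega}|^2=\widehat{m_\omega}$, setting $\tilde\varphi_\omega(z)=\overline{\varphi_\omega(-z)}$ we have $m_\omega=\varphi_\omega\ast\tilde\varphi_\omega$, so the goal is to show
\[
\lim_{N\to\infty}\frac{1}{4N^{2}}\int_{[-N,N]^{2}}\sum_{s\in\calu_\omega}\langle f,\varphi_s(p+\cdot)\rangle\,\varphi_s(p+x)\,dp
\;=\;(f\ast\varphi_\omega\ast\tilde\varphi_\omega)(x).
\]

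Unwinding the definition $\varphi_s(\cdot)=\sqrt{|s|}\,\varphi_\omega(\cdot-c(s))$, the integrand becomes
\[
|s|\sum_{s\in\calu_\omega}\int f(y)\,\overline{\varphi_\omega(p+y-c(s))}\,\varphi_\omega(p+x-c(s))\,dy.
\]
Setting $G(u)=\varphi_\omega(x-u)\int f(y)\,\overline{\varphi_\omega(y-u)}\,dy$ and substituting $u=c(s)-p$, the quantity in the integral equals $|s|\sum_{\sigma\in L}G(\sigma-p)$, where $L$ is the lattice of tile centers of $\calu_\omega$.  The key observation is that, because the tiles in $\calu_\omega$ tile $\bbr^{2}$ and therefore $L$ is a genuine translation lattice with fundamental domain of volume $|s|$, the function $p\mapsto\sum_{\sigma\in L}G(\sigma-p)$ is $L$-periodic.

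Next I would apply the standard fact that the ergodic average of an $L$-periodic function over expanding boxes converges to its mean over a fundamental domain $F$:
\[
\lim_{N\to\infty}\frac{1}{4N^{2}}\int_{[-N,N]^{2}}\Big(|s|\sum_{\sigma\in L}G(\sigma-p)\Big)dp
\;=\;\frac{|s|}{|F|}\int_{F}\sum_{\sigma\in L}G(\sigma-p)\,dp
\;=\;\int_{\bbr^{2}}G(u)\,du,
\]
after unfolding the sum-plus-integral into a single integral over $\bbr^{2}$.  Computing this last integral, a change of variables $v=x-u$ gives $\int G(u)\,du=\int f(y)(\varphi_\omega\ast\tilde\varphi_\omega)(x-y)\,dy = (f\ast m_\omega)(x)$, completing the identity.

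The only delicate point is justifying the interchange of the limit with the sum and making the ergodic-average step rigorous; this requires the Schwartz-type decay of $\varphi_\omega$ (inherited uniformly in $\omega$ from the affine normalization via $A,B,C$ noted before the statement) to guarantee that $\sum_{\sigma}|G(\sigma-p)|$ converges uniformly and that tails outside $[-N,N]^{2}$ contribute $o(N^{2})$.  Since the rapid decay of $\varphi_\omega$ dominates $G$ pointwise by a Schwartz function of $u$, both justifications are routine, and the argument goes through for every $\omega\in\cald$ with constants independent of $\omega$.
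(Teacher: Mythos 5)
Your proposal is correct and is essentially the paper's argument: both use that $|\widehat{\varphi_\omega}|^2=\widehat{m_\omega}$ makes $m_\omega$ the autocorrelation of $\varphi_\omega$, exploit the periodicity in $p$ of the sum over the tile partition $\calu_\omega$, and identify the large-box average with the mean over one fundamental domain (a single tile), with the Schwartz decay handling the boundary/tail error. The only difference is cosmetic: the paper starts from $f\ast m_\omega$ and unfolds it into the periodized sum, while you start from the averaged sum and fold it back.
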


\begin{proof}

We compute directly:  
\beqa
f \ast  m_{\omega}  (x) 
&=& \int _{z\in\bbr ^2} f(z)\int_{p\in\bbr ^2} \overline{\varphi_{\omega} (p)} \varphi_{\omega} (p +x-z) dp dz \\
&=& \int _{z\in\bbr ^2} f(z) \sum_{s\in \calu _{\omega}} \int _{p\in s} \overline{\varphi_{\omega} (p+z)} \varphi_{\omega} (p +x) dp dz \\
&=&\sum_{s\in \calu _{\omega}} \int _{p\in s}    \int _{z\in\bbr ^2} f(z) \overline{\varphi_{\omega} (p+z)} dz \varphi_{\omega} (p +x) dp \\
&=&\sum_{s\in \calu _{\omega}} \int _{p\in s}  \langle f , \varphi_{\omega} (p + \cdot ) \rangle \varphi_{\omega} (p + x) dp \\
&=&\sum_{s\in \calu _{\omega}} {1\over |R_{\omega}|} \int _{p\in R_{\omega} } \langle f , \varphi _s (p + \cdot ) \rangle \varphi _s (p + x) dp  \\
&=& \lim _{N\rightarrow \infty} {1\over 4N^2} \int _{[-N, N]^2}
\sum_{s\in \calu _{\omega} }  \langle f , \varphi _s (p + \cdot ) \rangle \varphi _s (p + x) dp.
\eeqa
To see the last equality, note that the integrand is periodic in $p$, and the error (which arises from the fact that $[-N, N]^2$ will not exactly agree with the boundaries of the tiles $s$) goes to zero as $N\rightarrow \infty$.
\end{proof}
This lemma allows us to conclude (using the dominated convergence theorem) that 
\beqa
H_l (f \ast  m_{\omega} ) (x) = \lim _{N\rightarrow \infty} {1\over 4N^2} \int _{[-N, N]^2} 
	H_l \left( \sum_{s\in \calu _{\omega} }  \langle f , \varphi _s (p + \cdot ) \rangle \varphi _s (p + x) \right) dp.
\eeqa
This allows us to restrict attention to the model operator that we define shortly.  
Define
\beqa
\psi _s = \psi _{\log (length(s))}
\eeqa
and 
\beqa
\phi _s (x_1, x_2) = \int \check{ \psi} _s (t)  \varphi _s (x_1 - t , x_2 - t v(x)) dt.
\eeqa
We record the following fact for use in the proof of the tree lemma in Section \ref{treeproof}.
\begin{lemma}
We have $\phi _s (x) =0$ unless $v(x) \in \omega _{s,2}$. 
\end{lemma}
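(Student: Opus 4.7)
The plan is to pass to Fourier variables and read off the constraint on $v(x)$ from the supports that appear. Since the integrand in the definition of $\phi_s(x)$ freezes $u(\cdot)$ at the evaluation point $x_1$, I would insert the Fourier representation $\varphi_s(y)=\int\hat{\varphi}_s(\xi,\zeta)e^{2\pi i y\cdot(\xi,\zeta)}\,d\xi\,d\zeta$, interchange the order of integration (justified by the Schwartz decay of $\check{\psi}_s$ and $\varphi_s$), and compute the $t$-integral in closed form to obtain
\beqa
\phi_s(x)=\int\hat{\varphi}_s(\xi,\zeta)\,\psi_s\bigl(\xi+u(x_1)\zeta\bigr)\,e^{2\pi i(x_1\xi+x_2\zeta)}\,d\xi\,d\zeta.
\eeqa
Non-vanishing of $\phi_s(x)$ therefore requires the supports of the two factors inside the integral to overlap for some $(\xi,\zeta)$.

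Next I would examine those supports. By the construction in Section~\ref{consttile}, $\hat{\varphi}_s$ is concentrated where $\tilde{\beta}(\zeta)\beta_{\omega_s}(\xi/\zeta)\neq 0$, which confines $\xi/\zeta$ to (a tight neighbourhood of) $\omega_{s,1}$; on the other hand $\psi_s=\psi_{\log(\text{length}(s))}$ pins $\xi+u(x_1)\zeta$ to an interval at scale $\text{length}(s)=w/|\omega_s|$. Writing $\xi+u(x_1)\zeta=\zeta\bigl(\xi/\zeta+u(x_1)\bigr)$ and subtracting the $\xi/\zeta$ constraint, one obtains an inclusion of the form $u(x_1)\in J_{s,\zeta}-\omega_{s,1}$, where $J_{s,\zeta}$ is the support of $\psi_s$ divided by $\zeta$.

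The final step is to verify that these translates lie in $\omega_{s,2}$. The matched choice of the Hilbert scale against $|\omega_s|$, together with the placement of $\omega_{s,1}$ at the right half of $\omega_s$, is designed precisely so that the translate $J_{s,\zeta}-\omega_{s,1}$ falls inside the left half $\omega_{s,2}$ for every admissible $\zeta$; this is the content of the lemma.

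The main obstacle I expect is the bookkeeping in that last step: $\zeta$ ranges over a nontrivial vertical band inside the trapezoid $\tau$, so one has to confirm that the interval $J_{s,\zeta}-\omega_{s,1}$ does not spread beyond $\omega_{s,2}$ as $\zeta$ varies. The Fourier-inversion and Fubini manipulations leading to the closed-form expression above are otherwise routine and follow directly from the Schwartz decay built into $\check{\psi}_s$ and $\varphi_s$.
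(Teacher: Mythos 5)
Your argument is essentially the paper's own proof, which is stated in one line as ``Use Plancherel's theorem and the Fourier supports of $\psi_s$ and $\varphi_s$'': passing to the Fourier side gives exactly your closed-form expression $\phi_s(x)=\int\hat{\varphi}_s(\xi,\zeta)\,\psi_s(\xi+u(x_1)\zeta)\,e^{2\pi i(x_1\xi+x_2\zeta)}\,d\xi\,d\zeta$, and the conclusion is read off from the overlap of the two supports, just as you do. You simply spell out the computation and the support bookkeeping that the paper leaves implicit, so the approach matches and is correct.
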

\begin{proof}
Use Plancherel's theorem and the Fourier supports of $\psi _s$ and $\varphi _s$. 
\end{proof} 
\subsection{The model operator} \label{modeloperatorsection}
We can finally define our model operator:
\beqa
\boldc  f &=& \sum _{s\in \calu } \langle f, \varphi _s \rangle  \phi _s . 
\eeqa
For readers following the executive summary:  $\varphi _s$ is a standard wave packet adapted to the tile $s$, and $\phi _s$ is the appropriate scale of the Hilbert transform acting on $\varphi _s$.  A good mental shortcut is to imagine 
$\phi_s (x) = \varphi _s (x) \one _{\omega _{s,2}} (u(x))$, an expression quite similar to one appearing in the Lacey-Thiele proof of Carleson's theorem.
By Lemma \ref{fouriersummation}, each operator $H^{(i)}$ is an average of models of the form $\boldc $.  Hence it is enough to prove the following theorem.  
\begin{thm} \label{modelthm}
With $\boldc $ defined immediately above, and $p\in (1 ,\infty )$, we have 
\beqan \label{modelmain}
||\boldc f || _ p \lesssim ||f||_p .
\eeqan
\end{thm}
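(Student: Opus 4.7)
The plan is to follow the two-dimensional time-frequency paradigm sketched in the introduction. By interpolation and scaling, it suffices to establish restricted weak-type estimates of the form $|\langle \boldc \one_F, \one_G \rangle| \lesssim |F|^{1/p} |G|^{1/p'}$, after removing an exceptional set of controlled measure from either $F$ or $G$. Since $p>2$ is already handled in \cite{LL1} (reviewed in the appendix), the core case is $p\in(1,2)$, where the exceptional set should be defined using the one-variable vector field maximal function from \cite{B1}.

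Next, I would organize $\calu$ by two dyadic parameters. For each pair $(\sigma,\delta)$ extract a collection $\calt_{\sigma,\delta}$ of essentially disjoint trees whose individual tiles have density of order $\delta$ relative to $\one_F$ and whose tree-wise size is of order $\sigma$ relative to $\one_G$; this decomposes the model sum as $\boldc f = \sum_{\sigma,\delta}\sum_{T\in\calt_{\sigma,\delta}} \boldc_T f$ with $\boldc_T f = \sum_{s\in T}\langle f,\varphi_s\rangle \phi_s$ concentrated near $\topp(T)$. The key quantitative inputs are then a \emph{tree lemma} giving $|\langle \boldc_T \one_F, \one_G\rangle| \lesssim \sigma\delta\,|\topp(T)|$ per tree, together with three complementary counting bounds on $\sum_{T\in\calt_{\sigma,\delta}}|\topp(T)|$: the \emph{density lemma} yielding $\lesssim \delta^{-1}|F|$ by a covering argument, the \emph{size lemma} yielding $\lesssim \sigma^{-2}|G|$ by Bessel-type orthogonality for wave packets, and the \emph{maximal lemma} improving on the density estimate by extracting an ingredient from the proof of the maximal theorem in \cite{B1}. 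Substituting into the tree lemma, summing geometric series over $\sigma$ and $\delta$, and optimizing which of the three counts to apply in each dyadic range then delivers the desired restricted weak-type bound, and hence Theorem~\ref{modelthm}.

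The main obstacle is the range $p$ close to $1$. A naive combination of the density and size lemmas against the tree lemma does not cover all of $(1,\infty)$; this forces one both to organize the trees more carefully, so that a tile inheriting its density from a tree top of much smaller density is not overcounted (the tree-sorting philosophy of Section~\ref{maximalsection}), and to invoke a more efficient appeal to the maximal theory via localized $L^p$ and BMO estimates on the tree square function (Sections~\ref{sqfcnsection}--\ref{bmosection}). A refined Bessel inequality (Section~\ref{besselsection}) is needed throughout to control the two-dimensional tail terms that arise whenever a wave packet is truncated to a tree top, since unlike in one dimension the relevant exceptional sets cannot be taken to be unions of intervals. Balancing these ingredients is, to my mind, the technical heart of the argument, and is where the simplification of \cite{LL2} and the sharpening of the maximal input must be combined to reach the full range of $p$.
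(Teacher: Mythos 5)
Your outline reproduces the paper's architecture: reduction by restricted weak-type interpolation to the bilinear estimate \eqref{model}, sorting of the tiles into trees of fixed density $\delta$ and size $\sigma$, the tree lemma giving $\delta\sigma|\topp(T)|$ per tree, and the three counts (size, density, maximal) on $\sum_{T}|\topp(T)|$ balanced over dyadic $\delta,\sigma$ as in Section \ref{optimization}, with the refined Bessel inequality and the localized $L^p$/BMO square-function estimates as the technical backbone. Two corrections are needed, however. First, there is no exceptional set anywhere in the argument: after the harmless reduction to $|F|\le c|E|$ when $p\le 2$ (and the appendix for $p\ge 2$), the full restricted weak-type bound is proved for every $p\in(1,\infty)$ by summing the tree estimates directly. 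Your proposal to define an exceptional set via the maximal function of \cite{B1} is precisely the route of \cite{LL2} that the paper is at pains to avoid: combining a density-type argument with a level-set bound for the maximal operator costs a factor of $\delta^{-1}$ twice and does not reach the full range of $p$. The actual gain, which your later paragraphs gesture at correctly, is to bypass the maximal theorem and apply the counting estimate from its proof (Lemma \ref{maximallemma}) directly to a family of pairwise incomparable parallelograms manufactured from the tree tops; making the tops eligible for that lemma is the whole content of Section \ref{maximalsection} (assigning each tree a parallelogram $R\in\calr$ with genuine $\dense\sim\delta$ rather than merely $\udense\sim\delta$, sorting by the covering index $j$, and Lemma \ref{intersection}, proved via the square-function estimates, to supply the hypothesis \eqref{magnitudecond}).

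Second, you have the roles of the two sets reversed: since $\phi_s$ vanishes unless $v(x)\in\omega_{s,2}$, density must be measured against the set paired with $\phi_s$ (the paper's $E$, your $G$), while size is measured against the input set that produces the coefficients $\langle\one_F,\varphi_s\rangle$; correspondingly the density count is $|E|/\delta$ and the size count is $|F|/\sigma^2$, not the other way around. With the swapped assignment the tree lemma would not even make sense, since its proof uses the smallness of $E_s=E\cap u^{-1}(\omega_s)$ and the bound $|\langle\one_F,\varphi_s\rangle|\le\sigma|s|^{1/2}$. With these two points repaired, your plan coincides with the paper's proof in outline, though all of the cited lemmas of course remain to be proved.
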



By appealing to restricted weak-type interpolation, it suffices to prove
\beqa
|\langle \boldc  \one _F, \one _E \rangle | \lesssim |E|^{1-{1\over p}} |F|^{1\over{p}}
\eeqa
for arbitrary  $E, F\subseteq \bbr ^2 $ and $p \in (1, \infty)$.  Of course by the triangle inequality it suffices to prove the following inequality:
\beqan  \label{model}
\sum _{s\in \cals} | \langle \one _F, \varphi _s \rangle
			\langle \one _E , \phi _s \rangle | \lesssim
		|E|^{1-{1\over p}} |F|^{1\over{p}}
\eeqan
for any $p\in (1, \infty) $, any $E, F\subseteq \bbr ^2 $, and any finite $\cals \subseteq \calu$.  This is our task for the rest of the paper.  Lacey and Li have already proved this estimate for arbitrary vector fields when $p\geq 2$.  We discuss this proof in the appendix.  Note that for $p\leq 2$, we have
\beqa
|E|^{1-{1\over p}} |F|^{1\over p} = |E|^{1\over 2} |F|^{1\over 2}
	\left(  {{|F|} \over {|E|}}  \right) ^{ {1\over p} - {1\over 2} }
	\gtrsim |E|^{1\over 2} |F|^{1\over 2}
\eeqa
whenever $|F| \gtrsim |E|$ because $ {1\over p} - {1\over 2} >0$.  Hence our estimate is already proved when
$|F| \gtrsim |E|$, so we restrict attention to the case $|F| \leq c |E|$ for some small constant $c$.


\section{Key definitions}\label{defs}

\begin{defn}
Given a parallelogram $R$, we write $CR$ to denote the parallelogram with the same center as $R$ but dilated by a factor of $C$.
\end{defn}
\begin{defn}
Given two parallelograms $R_1$ and $R_2$ in $\calu$, we will write $R_1 \leq R_2$ whenever $R_1 \subseteq CR_2 $ and
$\omega_{R_2} \subseteq \omega _{R_1} $.  
\end{defn}
Recall that $\omega_{R}$ is defined in Section \ref{consttile}.
The exact value of $C$ in the last definitions is not important: $10$ is enough.  We need that if $R_1 \cap R_2 \neq \emptyset$ and $\omega _{R_1} \subseteq \omega _{R_2} $, then $R_2 \leq R_1$.  


\begin{defn}\label{treedef} A \rm tree \it is a collection $T$ of parallelograms with a top parallelogram, denoted $\topp (T)$, with $\topp (T) \in \calu$, such that for all $s \in T$, we have $s\leq \topp (T)$.  A tree $T$ is a $j$-tree if $\omega_{\topp (T)}\cap \omega _{s,j} =\emptyset $.  Given a tree $T$, we will write $T_j$ to denote the maximal $j$-tree contained in $T$.
\end{defn}
Recall that $\omega _{s, 1}$ is the right half of $\omega _s$ and $\omega_{s,2}$ is the left half.  
The following definitions will help us organize our collections of tiles.  Recall that our vector field $v$ is defined on a set $E$; this set plays a role in the definitions of $\dense $ and $\udense$ below.  Similarly, the definitions of $\size$ depends on our other set $F$.

%
For $x\in \bbr ^2$, let $\chi  (x) = {{1} \over { 1+ |x|^{100}  } }$.  For any parallelogram $s$, let $\chi ^{(p)}_s$ be an $L^p$ normalized version of $\chi$ adapted to the parallelogram $s$.
\begin{defn}
Define the following for a parallelogram $s$ and a collection of parallelograms $\cals$:
\beqa
E_s &=& \{ (x,y)\in E \colon u(x) \in \omega _s \} \\
\dense(s) &=& \int _{ E_s } \chi ^{(1)} _s  \\
\udense(s) &=& \sup _{s' \geq s, s' \in \calu} \dense (s') \\
\size(\cals) &=& \sup _{\text{$1$-trees  }T\subseteq \cals}   \left( {1\over {|\topp(T)| } } \sum _{s\in T } 	| \langle \one _F , \varphi _s \rangle | ^2 \right) ^{1\over 2}.
\eeqa
\end{defn}
We remark that the function $\chi$ is needed for density since the wave packets $\varphi _s$ have Schwartz tails.  See the proofs of the tree and density lemmas.  The extra technicality involved in defining $\udense$ (as opposed to just $\dense$) is needed for our proof of the tree lemma (just as it is in the one-dimensional theory of \cite{LT}).  The cost is rather high:  a density estimate (see Estimate \ref{density} below) is still easily obtainable, but the maximal estimate becomes much more difficult to prove.  If 
$\udense (s) $ were equal to $\dense (s)$ for every tile $s$, then the tops of the trees constructed in Section \ref{organizationsection} are already prepared for an application of maximal technology.  Unfortunately this is not the case, and this difficulty prompts our consideration of the collections $\calr _j$ in Section \ref{maximalsection}.  See also the delicate sorting algorithm in Lacey-Li \cite{LL2}, where the authors wrestle with the same issue.


\section{Organization} \label{organizationsection}
In this section we carry out the main decomposition of the collection of tiles.  We sort a given collection of tiles into subsets of tiles of approximately constant density, and further into trees of approximately constant size.  
The relevance of trees is shown in the following:
\begin{lemma} [Tree lemma] \label{treelemma}
Let $T$ be a tree.  Suppose $\udense (T) \leq \delta $.  Suppose $\size (T) \leq \sigma $.
Then
\beqa
\sum _{s\in T}  | \langle \one _F, \varphi _s \rangle
			\langle \one _E , \phi _s \rangle | \lesssim \delta \sigma |\topp (T)|.
\eeqa
\end{lemma}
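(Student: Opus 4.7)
The plan is to split $T$ into its maximal $1$-subtree $T_1$ and maximal $2$-subtree $T_2$ (according to whether $\omega_{\topp(T)}$ sits in the left or the right half of $\omega_s$) and to bound the contribution from each separately via Cauchy-Schwarz combined with an almost-orthogonality estimate. A small number of tiles with $\omega_s = \omega_{\topp(T)}$ belong to neither subtree and contribute trivially.

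For $T_1$, Cauchy-Schwarz gives
\begin{equation*}
\sum_{s \in T_1} |\langle \one_F, \varphi_s \rangle \langle \one_E, \phi_s \rangle| \leq \left(\sum_{s \in T_1} |\langle \one_F, \varphi_s \rangle|^2\right)^{1/2} \left(\sum_{s \in T_1} |\langle \one_E, \phi_s \rangle|^2\right)^{1/2}.
\end{equation*}
Since $T_1 \subseteq T$ is itself a $1$-tree, the first factor is controlled directly by the hypothesis $\size(T) \leq \sigma$ and is at most $\sigma |\topp(T)|^{1/2}$. The second factor is where almost-orthogonality enters: within a $1$-tree the frequency annuli of the $\phi_s$ at distinct scales are essentially disjoint (this is the point of making the annuli thin in the discretization of the Hilbert kernel), so a Bessel-type inequality yields $\sum_{s \in T_1}|\langle h,\phi_s\rangle|^2 \lesssim \|h\|_2^2$. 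I would apply this with $h$ the spatial localization of $\one_E$ to $\topp(T)$ coming from the Schwartz decay of $\phi_s$, and then exploit the support condition $\phi_s(x) = 0$ unless $v(x) \in \omega_{s,2}$ to pull out the density factor, producing $\sum_{s \in T_1}|\langle \one_E, \phi_s\rangle|^2 \lesssim \delta^2 |\topp(T)|$. The two bounds combine to give $\lesssim \sigma\delta|\topp(T)|$.

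For the $2$-tree $T_2$ the roles are partly dualized. Now $\omega_{\topp(T)} \subseteq \omega_{s,1}$, so the $\varphi_s$'s are the functions with nearly disjoint frequency content across scales; one bounds $\sum_{s\in T_2}|\langle \one_F,\varphi_s\rangle|^2$ by the refined Bessel inequality of Section \ref{besselsection} applied to a suitable localization of $\one_F$ near $\topp(T)$. Meanwhile each individual $\langle \one_E, \phi_s\rangle$ is estimated pointwise via $|\phi_s| \lesssim |s|^{-1/2}\chi_s \one_{\{v(x)\in \omega_{s,2}\}}$ together with $\dense(s) \leq \udense(s) \leq \delta$, giving $|\langle \one_E,\phi_s\rangle| \lesssim \delta |s|^{1/2}$. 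Inserting these into Cauchy-Schwarz and using that the spatial supports of the $\varphi_s$ sitting at a fixed scale are essentially disjoint inside $C\cdot\topp(T)$ again recovers $\lesssim \sigma\delta|\topp(T)|$.

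The main obstacle I expect is handling the Schwartz tails of $\varphi_s$ and $\phi_s$ that spill outside $s$. These tails prevent us from simply restricting the inner products to $E_s$ or $F \cap s$, which is precisely why $\dense$ is defined by integration against the weighted bump $\chi_s^{(1)}$ rather than a sharp cutoff, and they are also why standard $L^2$ Bessel is insufficient: the refined Bessel inequality of Section \ref{besselsection} is the essential ingredient that absorbs these tail contributions in the two-dimensional geometry. A secondary technical point is that the parallelograms shear with scale, but the slope bound on $v$ and the trapezoidal frequency localization keep this geometry controlled enough that the classical one-dimensional tree-lemma skeleton carries over.
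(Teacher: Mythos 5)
There is a genuine gap, and it sits at the heart of the argument. For the $1$-tree part you apply Cauchy--Schwarz and therefore need $\sum_{s\in T_1}|\langle \one_E,\phi_s\rangle|^2\lesssim \delta^2|\topp(T)|$, i.e.\ a bound \emph{quadratic} in $\delta$. Your proposed justification (a Bessel inequality applied to a localization $h$ of $\one_E$, then ``pulling out the density factor'') cannot produce this: the density hypothesis controls the \emph{measure} of the relevant set $E\cap C\topp(T)\cap\{v(x)\in\omega_{s,2}\}$ linearly in $\delta$, so Bessel gives at best $\sum_s|\langle \one_E,\phi_s\rangle|^2\lesssim \|h\|_2^2\lesssim\delta|\topp(T)|$, and Cauchy--Schwarz then yields $\sigma\delta^{1/2}|\topp(T)|$, losing $\delta^{1/2}$. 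Summing the per-tile bound $|\langle\one_E,\phi_s\rangle|\lesssim\delta|s|^{1/2}$ is no better, since $\sum_{s\in T_1}|s|$ grows with the number of scales. Indeed Riesz-product-type configurations of $E$ show that a $\delta^2|\topp(T)|$ bound for this square sum is not available, and in a $1$-tree there is no orthogonality among the $\phi_s$ to invoke: the direction cutoffs $\{v(x)\in\omega_{s,2}\}$ are \emph{nested} (they all contain $\omega_{\topp(T)}$), and $\phi_s$ is not a Fourier-localized object because of the $x$-dependence of $v$. This failure of the naive $\ell^2\times\ell^2$ splitting is exactly why the paper (following Lacey--Thiele) argues differently: it partitions the plane into boxes $P$, shows via the $\udense$ hypothesis (Claim \ref{support} -- note your proposal never uses $\udense$ as opposed to $\dense$) that the $E$-restricted large-tile sum is supported in a set of measure $\lesssim\delta|P|$, controls the scale-truncated (partial) sums pointwise by maximal averages of the full lacunary sum $\sum_{s\in T_1}\langle \one_F,\varphi_s\rangle\alpha_s$ (Claim \ref{average}, with the auxiliary functions $\alpha_s$), and only then uses $L^2$/Bessel for that full sum; the factor $\delta$ thus enters once, through a support measure multiplying an averaged bound, never through a squared $\ell^2$ sum.

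The $2$-tree half has the orthogonality reversed. In a $2$-tree one has $\omega_{\topp(T)}\subseteq\omega_{s,1}$, so the frequency supports of the $\varphi_s$ across scales are \emph{nested} near the top frequency, not disjoint; Lemma \ref{bessel} is stated (and true) for $1$-trees, and $\sum_{s\in T_2}|\langle\one_F,\varphi_s\rangle|^2$ is not controlled by $\sigma^2|\topp(T)|$ (again $\sum_{s\in T_2}|s|$ is unbounded relative to $|\topp(T)|$, and $\size$ is a supremum over $1$-subtrees only). What is disjoint across scales in a $2$-tree is the \emph{supports} of the $\phi_s$, via $\phi_s(x)=0$ unless $v(x)\in\omega_{s,2}$ with the $\omega_{s,2}$ pairwise disjoint; the paper exploits exactly this, combining the pointwise bound $|\langle\one_F,\varphi_s\rangle\phi_s|\lesssim\sigma$ on the (single contributing scale at each point) with the small support $\lesssim\delta|P|$ from Claim \ref{support}, rather than any Bessel inequality for $\varphi_s$. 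So while your coarse skeleton ($T_1\cup T_2$ split, per-tile size bounds, the support condition on $\phi_s$, the role of the refined Bessel inequality for tails) matches ingredients of the paper, the central mechanism -- small support from $\udense$ times $L^\infty$/averaged control of partial sums over boxes $P$ -- is missing, and the two Cauchy--Schwarz estimates as stated would fail.
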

This is the ``Tree Lemma" from \cite{LL1}, which is the $2$-D version of the same in \cite{LT}.  We prove it in 
Section \ref{treesection}.  It reduces \eqref{model} to proving for each $0<\epsilon <1$
\beqa
\sum_{\delta }  \sum_{\sigma}  \sum_{T \in \calt _{\delta ,\sigma} }
	 \delta \sigma |\topp (T)| \lesssim |F|^{1- \epsilon} |E|^{\epsilon} .
\eeqa 
We can already prove this with the Estimates \ref{size}, \ref{density},  \ref{maximal} (appearing in the next lemma) and some bookkeeping -- this is carried out in Section \ref{optimization}.
\begin{lemma}[Organizational Lemma] \label{organizelemma}
Let $\cals $ be a finite collection of tiles.  Then there exist a partition of $\cals $ into trees $\calt_{\delta, \sigma}$ where $\delta , \sigma  $ are dyadic with $\delta \lesssim 1$, 
(i.e., $\cals = \bigcup _{\delta, \sigma} \bigcup _{T\in \calt_{\delta, \sigma}} T$) such that the following estimates hold:
\begin{estimate}\label{size} [Orthogonality]
\beqa
\sum_{T \in \calt _{\delta ,\sigma} } |\topp (T)|
	\lesssim {{|F| } \over {\sigma ^{2} } }.
\eeqa
\end{estimate}
\begin{estimate}\label{density} [Density]
\beqa
\sum_{T \in \calt _{\delta ,\sigma} } |\topp (T)|
	\lesssim {{|E|} \over {\delta }} .
\eeqa
\end{estimate}
\begin{estimate}\label{maximal} [Maximal]
For any $\epsilon >0$,
\beqa
\sum_{T \in \calt _{\delta ,\sigma} } |\topp (T)|
	\lesssim {{|F|^{1-\epsilon} |E|^{\epsilon} } \over {\delta  \sigma ^{1+ \epsilon} }}.
\eeqa
\end{estimate}
\end{lemma}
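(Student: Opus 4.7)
My plan is a two-stage sorting procedure. First, I sort $\cals$ into dyadic density shells $\cals_\delta = \{s\in\cals : \udense(s) \in (\delta/2,\delta]\}$; since $\chi^{(1)}_s$ is $L^1$-normalized the only relevant $\delta$ are dyadic with $\delta \lesssim 1$. Then, within each $\cals_\delta$, I run the standard size-selection iteration. Starting from the largest dyadic $\sigma$ at or above $\size(\cals_\delta)$ and decreasing dyadically, while the current remnant of $\cals_\delta$ still has size $\geq \sigma$, I select a $1$-tree $T_1$ realizing size $\geq \sigma$ whose top has an extremal (say leftmost) frequency interval $\omega_{\topp(T_1)}$, extend it to a full tree $T = \{s \in \cals_\delta^{\mathrm{cur}} : s\leq \topp(T_1)\}$, add $T$ to $\calt_{\delta,\sigma}$, and remove $T$ from the remnant. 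The process terminates in finitely many steps because $\cals$ is finite, and the output is a partition of $\cals$ into trees indexed by $(\delta,\sigma)$.

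For the orthogonality bound (Estimate \ref{size}), I use that each $T\in\calt_{\delta,\sigma}$ contains a distinguished $1$-tree $T_1$ with $\sigma^2 |\topp(T)| \leq \sum_{s\in T_1} |\langle \one_F,\varphi_s\rangle|^2$. Summing in $T$ and invoking the almost-orthogonality of the wave packets across different trees (the size lemma of Section \ref{sizesection}, whose refined form uses the Bessel inequality of Section \ref{besselsection}) controls the total by $\|\one_F\|_2^2 = |F|$; the extremal choice of top in the selection algorithm is precisely what furnishes the frequency separation needed to run this almost-orthogonality argument. For the density bound (Estimate \ref{density}), I exploit that each top $\topp(T)$ has $\udense(\topp(T)) \gtrsim \delta$, so there exists some $s' \geq \topp(T)$ with $|E_{s'} \cap Cs'| \gtrsim \delta|s'|$. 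A Vitali-type covering argument on the enlarged parallelograms $\{Cs'\}_T$ (the density lemma of Section \ref{densitysection}) extracts a disjoint subfamily that still charges a constant fraction of $\sum_T |\topp(T)|$, yielding $\sum_T |\topp(T)| \lesssim |E|/\delta$.

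The substantive obstacle will be the maximal bound (Estimate \ref{maximal}): knowing only that each top has density $\gtrsim \delta$ does not produce the $\epsilon$-trade-off between $|E|$ and $|F|$. The idea is to exploit the additional fact that the tops were selected greedily inside $\cals_\delta$, so the associated frequency neighborhoods exhibit a separation structure matching the geometry of the maximal operator for the vector field $v$. As flagged in the discussion after Definition \ref{treedef}, $\dense$ and $\udense$ need not agree at the top, so one must first sort the tops into auxiliary collections $\calr_j$ (the argument of Section \ref{maximalsection}) indexed by how far above $\topp(T)$ one has to look to realize the density. Only after this refinement can one invoke Lemma \ref{maximallemma} — which uses the proof of the maximal theorem of \cite{B1} rather than its statement — in concert with the $L^p$ and $BMO$ estimates for the tree square function from Sections \ref{sqfcnsection} and \ref{bmosection}, to obtain the bound with the free parameter $\epsilon$. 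This is the step where the novelties advertised in the introduction enter, and it is where the subtle two-dimensional geometry is genuinely used; the first two estimates, by contrast, follow fairly mechanically from the selection algorithm.
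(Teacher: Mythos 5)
Your overall architecture is the paper's own: sort by $\udense$ into dyadic shells, then greedily extract trees of comparable size with an extremal frequency choice of top, and prove the three estimates via the size lemma, a covering argument, and the machinery of Section \ref{maximalsection}. However, two steps of your plan, as written, do not go through. First, your selection does not guarantee $\topp(T)\in T$, yet your density and maximal steps lean on ``$\udense(\topp(T))\gtrsim\delta$''. The size of a collection is a supremum over $1$-trees whose top is an arbitrary element of $\calu$, not necessarily a tile of $\cals_\delta$, and knowing $\udense(s)\sim\delta$ for every $s\in T$ does not transfer to the top: since $\topp(T)\geq s$, the relation only yields $\dense(\topp(T))\leq\udense(s)\lesssim\delta$, the wrong direction, and the non-transitivity of $\leq$ blocks any chaining of witnesses $s'\geq s$ up to $\topp(T)$. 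This is exactly why Lemma \ref{sizeiteration} selects only trees containing their own top (so that $\topp(T)$ is itself a tile of $\cals_\delta$ and can be dominated by some $R\in\calr$ with $\dense(R)\sim\delta$), and why it then needs the extra partitioning argument to show the leftover stock still has size $<\sigma/2$ -- the stopping condition only excludes large trees that contain their tops. Your scheme omits both the requirement and the compensating argument; without the former, the assignment $T\mapsto R_T$ underlying both Estimate \ref{density} and Estimate \ref{maximal} has no justification.

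Second, the density estimate is not a pure covering statement. A Vitali-type selection among the enlarged parallelograms $\{Cs'\}$ controls the measure of their union, but the quantity to bound is the \emph{sum} $\sum_T|\topp(T)|$, and many distinct tree tops can sit inside a bounded dilate of a single high-density parallelogram with a priori unbounded overlap, so a disjoint subfamily of the $\{Cs'\}$ need not ``charge a constant fraction'' of that sum. The missing ingredient is Claim \ref{disjointtops}: within each $\calt_R$ one greedily extracts trees with pairwise disjoint tops, and the discarded tops have total area $\lesssim|\topp(T)|$ because an excessive pile-up of tops, each carrying $\ell^2$ mass $\sim\sigma^2|\topp(T')|$, would assemble into a $1$-tree of size $\gg\sigma$, contradicting the selection. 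Thus the size bound enters the density estimate in an essential way -- it does not follow ``mechanically from the selection algorithm'' -- and the same disjointness claim is what drives Claim \ref{hard} inside the maximal estimate you sketch. With these two repairs (top-in-tree selection plus the stock-size argument, and the disjoint-tops claim), your outline coincides with the paper's proof; the maximal step then runs as you indicate, sorting $\calr$ both by the coverage fraction $2^{-j}$ and by the dilate $2^k$ at which the density is realized, and combining Lemma \ref{intersection} with Lemma \ref{maximallemma} before balancing against the density bound.
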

\begin{rem}
In fact we can take $\sigma \lesssim 1$, which we need (and prove) in the appendix.  
\end{rem}
In the remainder of this section we construct the collections of trees 
$\calt _{\delta, \sigma}$.  In the following sections we prove the estimates above.
Estimate \ref{size} follows from the construction of the trees $\calt _{\delta, \sigma}$, and the proof of the standard $\size$ lemma; we give a proof in Section \ref{sizesection}.
We prove Estimates \ref{density} and \ref{maximal} in Section \ref{maximalsection}.
We remark that we make these claims about the same family of trees.  This is in contrast to \cite{LT}, \cite{LL1}, \cite{LL2}, in which the argument has the form ``There exists a family $\calt_{size} $ such that $\cals _{\delta } = \cup _{T\in\calt _{size}} T$ and such that the size estimate holds for the collection $\calt _{size}$; further there is a (potentially different!) family $\calt _{density}$ such that $\cals _{\delta } = \cup _{T\in\calt _{density}} T$ and such that the density estimate holds for the collection $\calt _{density}$."


First, we sort the tiles by density:  
Let
\beqa
\cals _{\delta} = \{ s\in \cals \colon \udense (s) \in ({1\over 2}\delta , \delta ]\}
\eeqa
for dyadic $\delta$.  By the definition of $\dense$, we need only consider $\delta \leq ||\chi ||_1 \lesssim 1$.

We next sort each collection $\cals _{\delta} $ into families of trees with comparable size.  The following algorithm is a slight variant of the sorting algorithm used in \cite{LT} and in 
\cite{LL1}.  We want to ensure that $\topp (T) \in T$ for each tree $T$ in our construction.  There are some small technicalities that arise in the $2$-D situation due to the non-transitivity of the relation ``$\leq$".  
  Without loss of generality, we may assume our collection of tiles $\cals $ is finite, so we know there exists $\sigma _{max}$ such that $\size (\cals) \leq \sigma_{max}$ for every $T\subseteq \cals _{\delta }$.  This gives us a starting point for the following lemma.

\begin{lemma}\label{sizeiteration}
Let $\cals$ be a collection of tiles satisfying $\size (\cals) < \sigma $.   Then there exists a disjoint collection of trees $\calt _{\sigma}$ such that for all $T \in \calt _{\sigma}$, we have $\topp (T) \in T$, 
and 
\beqa
\size \left(\cals \setminus \bigcup _{T\in \calt _{\sigma } } T \right) 
<{\sigma \over 2}.
\eeqa
Finally, we have the estimate
\beqan \label{firstsizeestimate}
\sum_{T \in \calt _{\sigma}  } |\topp (T)| \lesssim {|F|\over \sigma ^2},
\eeqan
where here $F$ is the set used in the definition of $\size$.
\end{lemma}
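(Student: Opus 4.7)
The plan is to construct $\calt_\sigma$ by a greedy extraction: iteratively pull out $1$-trees of large size until none remain, and record them (together with their ``convex hulls'' inside $\cals$) as the trees in $\calt_\sigma$. Fix once and for all a total order $\prec$ on $\calu$ (for instance, ordering tiles by the lower endpoint of $\omega_{s,1}$, breaking ties by lexicographic position of the spatial parallelogram). So long as there exists a $1$-tree $T^\ast \subseteq \cals$ whose top satisfies $\topp(T^\ast) \in \cals$ and
\[
\sum_{s \in T^\ast} |\langle \one_F, \varphi_s\rangle|^2 \; \geq \; (\sigma/2)^2 \, |\topp(T^\ast)|,
\]
I would select one such $T^\ast$ whose top is $\prec$-extremal. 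Then set $T := \{ s \in \cals : s \leq \topp(T^\ast)\}$, add $T$ to $\calt_\sigma$, remove $T$ from $\cals$, and repeat. Since $\cals$ is finite, the procedure terminates.

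Each extracted $T$ is a tree by definition, and because $\topp(T^\ast) \leq \topp(T^\ast)$ and $\topp(T^\ast) \in \cals$, the required property $\topp(T) \in T$ holds with $\topp(T) = \topp(T^\ast)$. The trees in $\calt_\sigma$ are pairwise disjoint by the removal step. When the loop exits, no $1$-tree of size $\geq \sigma/2$ remains inside the residual collection, which is exactly
\[
\size\!\left(\cals \setminus \bigcup_{T \in \calt_\sigma} T\right) < \sigma/2.
\]

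For the counting estimate \eqref{firstsizeestimate}, denote by $T^\ast_T$ the selected $1$-tree producing $T$. Summing the selection bound gives
\[
(\sigma/2)^2 \sum_{T\in\calt_\sigma} |\topp(T)| \; \leq \; \sum_{T\in\calt_\sigma} \sum_{s \in T^\ast_T} |\langle \one_F, \varphi_s\rangle|^2,
\]
so it suffices to show the right side is $\lesssim \|\one_F\|_2^2 = |F|$. This is a standard almost-orthogonality assertion: within each $T^\ast_T$, the intervals $\omega_{s,1}$ are pairwise disjoint (a dyadic nesting consequence of the $1$-tree property, since all $\omega_{\topp(T)}$ sit inside a common half of $\omega_s$), while the $\prec$-extremal selection forces the relevant frequency supports across different selected trees to be well-separated. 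Combined with a $TT^\ast$ argument one concludes the desired $|F|$ bound.

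The main obstacle, and the reason this step is genuinely harder than its one-dimensional ancestor in \cite{LT}, is that the wave packets $\varphi_s$ are only Schwartz-localized rather than sharply supported, and the relevant parallelograms can have wildly different eccentricities in 2-D. Consequently, ``almost-disjoint'' frequency supports do not translate directly into orthogonality, and the tail interactions between tiles in different trees must be controlled quantitatively. I would handle this through the refined Bessel-type inequality of Section \ref{besselsection}, and defer the detailed verification of \eqref{firstsizeestimate} to the $\size$ lemma proof in Section \ref{sizesection}, where exactly this machinery is set up.
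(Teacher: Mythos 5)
There is a genuine gap at the step where you pass from the termination of the greedy loop to the conclusion $\size(\cals\setminus\bigcup_{T\in\calt_\sigma}T)<\sigma/2$. Your exit condition only says that no $1$-tree $T^\ast$ with $\topp(T^\ast)$ \emph{belonging to the stock} and with $\sum_{s\in T^\ast}|\langle \one_F,\varphi_s\rangle|^2\geq(\sigma/2)^2|\topp(T^\ast)|$ remains. But $\size$ is a supremum over \emph{all} $1$-tree subsets of the residual collection, and by Definition \ref{treedef} the top of such a tree need only lie in $\calu$ --- it need not be a tile of the residual collection, nor even of $\cals$. So the residual can perfectly well contain a $1$-tree of size $\geq\sigma/2$ whose top was never eligible for selection, and your claim that the exit condition is ``exactly'' the desired size bound does not follow. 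The paper closes precisely this hole with a second decomposition: given an arbitrary $1$-tree $T$ in the residual, it repeatedly extracts a maximal-length tile $t$ and the subtree $T_t=\{s: s\leq t\}$, so that each $T_t$ contains its own top; the tops $t$ are pairwise disjoint and contained in $C\,\topp(T)$, whence $\sum_t|\topp(T_t)|\leq C|\topp(T)|$, and each $T_t$ escaped selection, so $\sum_{s\in T_t}|\langle\one_F,\varphi_s\rangle|^2\leq |\topp(T_t)|\,\sigma^2/C^2$. Summing gives $\size(T)\leq \sigma/\sqrt{C}$.

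This also shows your choice of threshold is too tight. The bridging argument loses a factor $\sqrt{C}$ (coming from the overlap of the subtree tops inside $C\,\topp(T)$), which is why the paper selects trees at threshold $\sigma/C$ with $C$ somewhat large (at least $C\geq 4$), not at $\sigma/2$: with your threshold $\sigma/2$ the same argument only yields $\size(T)\lesssim \sigma\sqrt{C}/2$, which is weaker than what the lemma asserts. Deferring the counting estimate \eqref{firstsizeestimate} to the orthogonality argument of Section \ref{sizesection} is consistent with the paper (which does the same, and which is also where the extremal choice of top --- ``most clockwise'' there, your $\prec$-extremal choice here --- is actually used, in Claim \ref{technicalsizeclaim}), but the missing tops-in-the-tree bridging step and the resulting constant bookkeeping are the substantive content of this lemma's proof, and your proposal omits them.
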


\begin{rem}
Having $\topp (T) \in T$ will be helpful in Section \ref{maximalsection}.  See in particular the construction of the rectangles 
$R_T$ and the collections $\calt _R$.
\end{rem}

\begin{proof}
Initialize
\beqa
STOCK &=& \cals \\
\calt_{\sigma } &=& \emptyset .
\eeqa
In the following scheme we write $C$ to denote the constant used in the definition of tree (see Definition \ref{treedef}), which we assume is somewhat large.
While there is a $1$-tree $T \subseteq STOCK$ with 
\beqa
\sqrt{   {1\over \topp (T)} \sum _{s\in T} |\langle \one _f , \varphi _s \rangle |^2   }    \geq {{\sigma } \over C}
\eeqa
 and with $\topp (T) \in T$, choose $T$ with $c( \omega _{\topp(T)} )$ most clockwise, let $\tilde{T}$ be the maximal tree with top equal to $\topp (T)$, and update
\beqa
STOCK &:=& STOCK \setminus  \tilde{T}  \\
\calt_{\sigma } &:=& \calt_{\sigma } \cup \{ \tilde{T} \}.
\eeqa
(Again, we write $c( \omega _{\topp(T)} )$ to denote the center of $\omega _{\topp(T)} $.)  
\begin{rem}
We remark that our choice of $c(\omega _{\topp (T)})$ most clockwise will be used in the proof of Estimate \ref{firstsizeestimate} in Section \ref{sizesection}.  See specifically Claim \ref{technicalsizeclaim}.  
\end{rem}
When no such trees remain, we have the collection of trees 
$\calt_{\sigma}$ described in the statement of the lemma.  By construction we see that $\topp (\tilde{T})\in \tilde{T} $ and that 
$\size(\tilde{T} ) \geq {\sigma \over C}$ for each $\tilde{T} \in \calt_{\sigma}$.  The estimate \eqref{firstsizeestimate} follows rather standard arguments; we present the proof in Section \ref{sizesection}.  It remains to prove the following:
\begin{claim} 
\beqa
\size \left( STOCK \right) 
<{\sigma \over 2}.
\eeqa
\end{claim}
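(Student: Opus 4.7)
My strategy is to prove the claim by contradiction: suppose some 1-tree $T\subseteq STOCK$ witnesses $\size(STOCK)\geq \sigma/2$, and show that this would force the existence of an eligible tree for the while-loop (a 1-tree in $STOCK$ with $\topp(T)\in T$ and size-sum exceeding the threshold $\sigma/C$), contradicting the loop having terminated. Write $T_0:=\topp(T)$ and split on whether $T_0\in STOCK$.

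\emph{Easy case: $T_0\in STOCK$.} The 1-tree condition $\omega_{T_0}\cap\omega_{s,1}=\emptyset$ is imposed only for tiles distinct from the top, so $T\cup\{T_0\}$ is still a 1-tree, now with its top explicitly inside it. Adding $T_0$ only increases the size-sum, so this enlarged tree lies in $STOCK$, has top in itself, and has size $\geq\sigma/2$. For any $C\geq 2$ it passes the loop threshold, so the loop would not have terminated --- contradiction.

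\emph{Technical case: $T_0\notin STOCK$.} One cannot simply adjoin $T_0$, and this is where the non-transitivity of $\leq$ flagged just before the lemma becomes relevant. I would exploit that all $\omega_s$ with $s\in T$ are dyadic intervals containing $\omega_{T_0}$ and hence form a chain under inclusion; the minimum element of this chain gives a (spatially largest) tile $s^\star\in T$ with $\omega_{s^\star}\subseteq\omega_s$ for every $s\in T$. I would then partition $T$ into spatial sub-collections indexed by the sub-parallelograms of $CT_0$ at the scale of $s^\star$. Each piece is then a 1-tree whose top lies in $T\subseteq STOCK$, and by pigeonhole at least one piece inherits a universal fraction of the full size-sum; this yields an eligible 1-tree in $STOCK$ of size comparable to $\sigma/2$, contradicting termination once $C$ is chosen sufficiently large.

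\emph{Main obstacle.} The principal difficulty is the non-transitivity of $\leq$: the uniform spatial containment $s\subseteq CT_0$ for $s\in T$ does not automatically propagate to $s\subseteq Cs^\star$, which is why the spatial repartitioning step is needed, and why the algorithm's constant $C$ must be taken large enough to absorb both the dilation constant arising from the repartition and the pigeonhole loss.
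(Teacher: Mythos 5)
Your ``easy case'' is fine (and is essentially harmless), but the technical case --- which carries all the content of the claim --- has a genuine gap at the step ``Each piece is then a 1-tree whose top lies in $T\subseteq STOCK$.'' After you repartition $C\topp(T)$ into cells at the single scale of the longest tile $s^\star$, a nonempty cell need not contain any tile of $T$ at that scale: it may consist entirely of much shorter tiles. Then either you declare the cell itself to be the top, in which case the top is not an element of $T$ (possibly not even of $\cals$), so the piece violates the eligibility condition $\topp(T')\in T'$ of the selection loop and no contradiction can be drawn from it; or you declare a longest tile $t$ of the cell to be the top, in which case the remaining tiles of the cell need not satisfy $s\subseteq Ct$, so the piece is not a tree with top $t$ --- the very non-transitivity problem you flag simply reappears one scale down, and a one-step, single-scale repartition cannot repair it. What is needed is a multi-scale decomposition, and that is exactly what the paper does (the $PANTRY$ loop): repeatedly select a tile $t$ of maximal length among the tiles of $T$ not yet used, and strip off all remaining $s$ with $s\leq t$. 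Each resulting piece automatically contains its top, the tops are tiles of $T$ (hence lie in $STOCK$), they are pairwise disjoint, and they are contained in $C\topp(T)$.

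The pigeonhole step is also problematic. The number of cells at scale $s^\star$ needed to cover $C\topp(T)$ is about $C^2\,|\topp(T)|/|s^\star|$, which is unbounded, so there is no ``universal fraction.'' Even running the averaging correctly --- weighting each piece by the area of its own top, which is how the size of a piece is normalized --- the cells fill an area $\sim C^2|\topp(T)|$, so the heaviest piece is only guaranteed size about $\sigma/(2C)$, which does not clear the loop threshold $\sigma/C$; and enlarging $C$ does not help, because the covering loss grows at the same rate as the threshold shrinks. The mechanism that actually proves the claim is not to hunt for one heavy piece but to use that \emph{every} piece of the multi-scale decomposition contains its top and therefore was not selected, i.e.\ has size at most $\sigma/C$; summing the resulting mass bounds $\sum_{s\in T_t}|\langle \one_F,\varphi_s\rangle|^2\leq |\topp(T_t)|\,\sigma^2/C^2$ over the disjoint tops inside $C\topp(T)$ gives $\size(T)\leq \sigma/\sqrt{C}<\sigma/2$ for $C$ large, the gain coming from the square root in the definition of $\size$. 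So the skeleton of your plan (reduce to trees containing their tops and invoke termination of the loop) is the right one, but both the construction of the pieces and the way the termination is exploited need to be replaced by the paper's iterative decomposition and summation argument.
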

Consider a tree $T\subseteq STOCK$.  Without loss of generality, $T$ is a $1$-tree (since the definition of $\size$ only takes into consideration $1$-tree subtrees of $T$ anyway). We will partition $T$ into a collection $\calt _T$ of subtrees of $T$, each of which contains its top, as follows:  Initialize
\beqa
PANTRY &:=& T \\
T_{max} &:=& \emptyset .
\eeqa
While $PANTRY$ is nonempty, choose a tile $t$ of maximal length in $PANTRY$, let $T_t$ be the maximal subset of $PANTRY$ such that 
$s\leq t$ for $s\in T_t$, and update
\beqa
PANTRY & := & PANTRY \setminus T_t \\
T_{max} & := &T_{max} \cup \{t\}.
\eeqa
It is clear that this construction exhausts all of $T$; i.e., eventually $PANTRY$ becomes empty.  Since the tiles $t\in T_{max}$ all satisfy $\omega _{\topp(T)} \subseteq \omega _t$, and since each is maximal with respect to ``$\leq $", we know these tiles are pairwise disjoint.  On the other hand, they are all contained in $C \topp (T)$, and $t= \topp (T_t)$, so 
\beqa
\sum_{t\in T_{max}} |\topp (T_t)| \leq C |\topp (T)|.
\eeqa
Further, since each tree $T_t$ for $t\in T_{max}$ contains its top, we know 
\beqa
\sqrt{   {1\over \topp (T)} \sum _{s\in T} |\langle \one _f , \varphi _s \rangle |^2   }  \leq {\sigma \over C},
\eeqa
for otherwise $T_t$ would have been selected and put into $\calt_{\sigma}$.  Hence
\beqa
\sum _{s\in T} |\langle f, \varphi _s \rangle | ^2 
&=& \sum _{t\in T_{max} } \sum _{s \in T_t }|\langle f, \varphi _s \rangle | ^2 \\ 
&\leq & \sum _{t\in T_{max}}  | \topp(T_t) | {\sigma ^2 \over C^2} \\
&\leq & { \sigma ^2 |\topp (T)| \over C}.
\eeqa
This implies 
\beqa
\size(T) \leq {\sigma \over \sqrt{C} },
\eeqa
which proves the claim provided $C\geq 4$.
\end{proof}
By applying the lemma iteratively to each collection $\cals _{\delta}$, we obtain collections $\cals _{\delta , \sigma}$ and $\calt_{\delta, \sigma}$ such that 
\beqa
\cals _{\delta , \sigma} = \bigcup _{T\in \calt_{\delta, \sigma}} T
\eeqa
where the union is disjoint, such that $\udense (s)\sim \delta$ for $s\in \cals_{\delta, \sigma}$, and such that 
\beqa
\size(T) \sim \sigma \sim \sqrt{   {1\over \topp (T)} \sum _{s\in T} |\langle \one _f , \varphi _s \rangle |^2   } 
\eeqa
 for $T\in \calt_{\delta, \sigma} $.  This proves Lemma \ref{organizelemma}, except for Estimates \ref{density} and \ref{maximal}.  Note that Estimate  \ref{size} follows from \eqref{firstsizeestimate}.


\section{Main Lemmas} \label{lemmas}

Here we present the main lemmas needed to prove Estimates \ref{density} and \ref{maximal}.

\begin{lemma} \label{densitylemma}
Suppose $\calr$ is a collection of pairwise incomparable (under ``$\leq$") parallelograms of uniform width such that
$\dense (R)\geq \delta$ for $R\in \calr$.
Then
\beqa
\sum _{R \in \calr} |R| \lesssim {{|E|} \over {\delta} }.
\eeqa
\end{lemma}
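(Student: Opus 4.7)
The proof rests on a pairwise disjointness statement: \emph{the sets $R\cap E_R$, for $R\in\calr$, are pairwise disjoint.} Indeed, suppose $p\in(R_1\cap E_{R_1})\cap(R_2\cap E_{R_2})$ with $R_1\neq R_2$. Then $R_1\cap R_2\ni p$ and $u(p_1)\in\omega_{R_1}\cap\omega_{R_2}$. Since $\omega_{R_1},\omega_{R_2}$ are dyadic intervals with nonempty intersection, they are either equal or strictly nested. In the equal case, $R_1$ and $R_2$ lie in the same partition $\calu_{\omega_{R_1}}$ and thus cannot intersect unless they coincide. In the nested case (say $\omega_{R_1}\subsetneq\omega_{R_2}$), the geometric fact recorded immediately after the definition of~``$\le$'' in Section~\ref{defs} applies and gives $R_2\le R_1$, contradicting the pairwise incomparability of $\calr$.

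Granted this disjointness, if we had $|R\cap E_R|\gtrsim\delta|R|$ for every $R\in\calr$, summing would immediately yield
\[
\delta\sum_{R\in\calr}|R|\;\lesssim\;\sum_{R\in\calr}|R\cap E_R|\;=\;\Bigl|\bigsqcup_{R\in\calr}(R\cap E_R)\Bigr|\;\le\;|E|.
\]
The remaining work is therefore to absorb the Schwartz tails of $\chi_R^{(1)}$. Writing $\chi_R^{(1)}(p)\lesssim|R|^{-1}(1+d_R(p,R))^{-100}$, where $d_R$ is the distance adapted to $R$, the density hypothesis becomes
\[
\delta\;\lesssim\;\sum_{n\ge 0}2^{-100n}\,\frac{|E_R\cap 2^n R|}{|R|},
\]
and pigeonholing against the summable weights $(1+n)^{-2}$ produces for each $R$ an index $n_R\ge 0$ with
\[
|E_R\cap 2^{n_R}R|\;\gtrsim\;(1+n_R)^{-2}\,2^{100 n_R}\,\delta\,|R|.
\]

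I would then partition $\calr=\bigsqcup_{n\ge 0}\calr_n$ with $\calr_n=\{R:n_R=n\}$ and repeat the disjointness analysis on the dilated cores $E_R\cap 2^n R$. At each fixed scale at most $O(2^{2n})$ tiles of the underlying partition $\calu_{\omega_k}$ have $2^n$-dilate meeting a given point $p$, and across scales the cross-scale argument behind the undilated disjointness (slopes differ by at most $|\omega|$ while lengths are $w/|\omega|$) yields, after a perpendicular-strip packing count, a polynomial multiplicity bound $O(2^{Cn})$ for the family $\{(p,R):p\in 2^n R,\ u(p_1)\in\omega_R,\ R\in\calr_n\}$. Consequently
\[
\sum_{R\in\calr_n}|R|\;\lesssim\;\frac{(1+n)^2}{2^{100n}\,\delta}\sum_{R\in\calr_n}|E_R\cap 2^n R|\;\lesssim\;\frac{(1+n)^2\,2^{Cn}}{2^{100n}\,\delta}\,|E|,
\]
and summing the geometric series in $n$ recovers the claimed bound $\sum_{R\in\calr}|R|\lesssim|E|/\delta$.

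The main obstacle is this last quantitative step: one must check that the dilated cores have polynomially bounded multiplicity. The undilated disjointness follows verbatim from the cited geometric fact and pairwise incomparability, but at scale $2^n R$ one needs a direct geometric count. Because the decay $2^{-100n}$ from $\chi_R^{(1)}$ comfortably dominates any polynomial $2^{Cn}$, the precise exponent $C$ is immaterial and the remaining work is routine.
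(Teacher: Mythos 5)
Your reduction to a disjointness statement for the undilated cores and your pigeonholing in the annulus parameter $n$ are fine and run parallel to the paper (which pigeonholes with the threshold $\tfrac1{100}\delta 2^{20k}|2^kR|$ and the least such $k$). The genuine gap is exactly the step you flag and then dismiss as routine: the claimed multiplicity bound $O(2^{Cn})$ for $\{(p,R)\colon p\in 2^nR,\ u(p_1)\in\omega_R,\ R\in\calr_n\}$ is false, because pairwise incomparability only forbids a shorter tile from lying inside $C$ times a longer one with nested frequency intervals; it does not prevent the $2^n$-dilates of tiles of \emph{many different lengths} from stacking over a single point. Concretely, fix $p$, let $\omega^{(l)}$ be the dyadic interval of length $2^{-l}$ containing $u(p_1)$, and for $l=0,1,\dots,L$ choose one tile $R^{(l)}\in\calu_{\omega^{(l)}}$ whose axis passes near $p$ but whose center sits at longitudinal distance about $2^{n-1}w2^{l}$ from $p$, alternating the side of $p$ with the parity of $l$. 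Then $p\in 2^nR^{(l)}$ and $u(p_1)\in\omega_{R^{(l)}}$ for every $l$, while for $l<l'$ the tile $R^{(l)}$ is never contained in $CR^{(l')}$ once $2^n\gg C$ (it is either on the other side of $p$, or much closer to $p$ than $CR^{(l')}$ reaches), so the family is pairwise incomparable and the multiplicity at $p$ is $L+1$ with $L$ arbitrary. Taking $u$ constant with value in $\omega^{(L)}$ and $E$ a common chunk of the dilates near $p$, all hypotheses of the lemma and your pigeonholed condition (with this same $n$) hold for a suitable $\delta$, and $\sum_{R}|E_R\cap 2^nR|\gtrsim L|E|$; so neither the pointwise nor the integrated form of your multiplicity claim holds with a constant independent of the collection. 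The lemma itself is not contradicted there only because $\delta$ is forced to be exponentially small in $L$ — which shows your chain of inequalities is not the mechanism by which the estimate is true, and with any correct (collection-dependent) multiplicity bound your route would at best produce an extra factor such as $\log(1/\delta)$.

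The paper closes this step without any multiplicity bound. Within each class $\calr_k$ it runs a greedy selection by decreasing length: choose $R$ of maximal length and discard every $R'$ with $2^kR'\cap 2^kR\neq\emptyset$ and $\omega_{R'}\cap\omega_R\neq\emptyset$. The discarded tiles are pairwise disjoint — their $\omega$'s all contain $\omega_R$, so any two that met would be comparable by the geometric fact you quote, contradicting incomparability — and they lie in a bounded dilate of $2^kR$, so their total area is $\lesssim 2^{2k}|R|$. For the \emph{selected} tiles the sets $u^{-1}(\omega_R)\cap 2^kR\cap E$ are genuinely pairwise disjoint by the selection rule (either the dilates are disjoint or the $\omega$'s are), so their measures sum to at most $|E|$, each being $\gtrsim \delta 2^{20k}|R|$. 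The huge pigeonhole gain ($2^{20k}$ there, $2^{100n}$ in your normalization) absorbs the $2^{2k}$ selection loss, and the sum over $k$ converges. Replacing your cross-scale packing count by this selection argument turns your outline into the paper's proof.
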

Lemma \ref{densitylemma} is nothing more than the Density Lemma from \cite{LT} with straightforward modifications for the $2$-D setting.

\begin{lemma}\label{maximallemma}
Suppose $\calr$ is a collection of pairwise incomparable (under ``$\leq$") parallelograms of uniform width such that for each $R\in \calr$, we have
\beqan \label{densecond}
{{|E\cap u^{-1} (\omega _R) \cap R |} \over {|R|} } \geq\delta
\eeqan
and
\beqan \label{magnitudecond}
{1\over {|R|} }  \int _R \one _F \geq \lambda .
\eeqan
Then for each $\epsilon >0$,
\beqa
\sum_{R\in\calr} |R| \lesssim {{ |F| } \over {\delta \lambda ^{1+\epsilon} } }.
\eeqa
\end{lemma}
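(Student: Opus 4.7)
The plan is to recognize the two hypotheses as placing the set $\bigcup_{R\in\calr}E_R$, with $E_R:=E\cap u^{-1}(\omega_R)\cap R$, inside a super-level set of a vector-field Kakeya maximal operator adapted to $v$, and then to convert this into a bound on $\sum_R|R|$ by exploiting the pairwise incomparability of $\calr$. By \eqref{densecond}, $|E_R|\ge\delta|R|$. For $x\in E_R$ the parallelogram $R$ contains $x$ and satisfies $u(x)\in\omega_R$, so its long axis matches the direction of $v(x)$ to within the precision of $\omega_R$. Combined with \eqref{magnitudecond} this yields
$$M_v\one_F(x)\;:=\;\sup_{R'\in\calu,\;x\in R',\;u(x)\in\omega_{R'}}\frac{1}{|R'|}\int_{R'}\one_F\;\ge\;\lambda\qquad\text{for every }x\in\bigcup_{R}E_R.$$

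The main input is then the vector-field maximal theorem of \cite{B1} for one-variable $v$, which yields, as a standard weak-type consequence,
$$\bigl|\{M_v\one_F\ge\lambda\}\bigr|\;\lesssim_{\epsilon}\;\frac{|F|}{\lambda^{1+\epsilon}}$$
for every $\epsilon>0$. If the sets $E_R$ were pairwise disjoint the proof would end here, since
$$\sum_{R\in\calr}|R|\;\le\;\delta^{-1}\sum_{R\in\calr}|E_R|\;=\;\delta^{-1}\Bigl|\bigcup_{R\in\calr}E_R\Bigr|\;\lesssim_{\epsilon}\;\frac{|F|}{\delta\,\lambda^{1+\epsilon}}.$$

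The main obstacle, and the genuinely new ingredient compared with using the maximal theorem as a black box, is that the $E_R$ are not in general disjoint, so some covering/orthogonality input is required to control their overlap. I would not try to extract a disjoint subfamily through a stopping-time argument, because that loses an extra power of $\delta$ and produces a suboptimal bound. Instead, following the ``simple observation'' flagged in the New Ideas subsection, I would appeal not to the statement of the maximal theorem of \cite{B1} but to the intermediate Kakeya-type inequality that appears inside its proof. That inequality applies directly to pairwise incomparable families $\calr$ satisfying the slope-density compatibility \eqref{densecond} and controls the multiplicity function $N(x):=\#\{R\in\calr:x\in R,\;u(x)\in\omega_R\}$ in a Cordoba-style $L^{1+\epsilon}$ fashion, losing only an $\epsilon$ power of $\delta$. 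Combining this with the super-level estimate via Hölder gives $\delta\sum_R|R|\le\int N\cdot\one_{\{M_v\one_F\ge\lambda\}}\lesssim_{\epsilon}|F|/\lambda^{1+\epsilon}$ up to such an $\epsilon$-loss, which can be reabsorbed into the final exponent of $\lambda$. Identifying the precise form of this intermediate inequality from \cite{B1} and matching its combinatorial hypotheses with our ``$\le$-incomparability'' is where I expect the real technical work to lie; once that is done the rest of the argument is mechanical.
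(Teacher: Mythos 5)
Your proposal takes essentially the same route as the paper: the paper offers no self-contained argument for this lemma, but simply cites the intermediate estimate established inside the proof of the maximal theorem of \cite{B1} (estimate (3.10) on page 959 and the construction of the collection $\calr _1$ in Section 3 there), which directly yields $\sum_{R\in\calr}|R| \lesssim |F|/(\delta \lambda^{1+\epsilon})$ for incomparable families satisfying both the density and population hypotheses --- precisely the ``appeal to the proof rather than the theorem'' step you identify, motivated by the same observation that the black-box weak-type bound would lose an extra power of $\delta$. Your conjectured internal mechanism (a multiplicity function controlled in Cordoba-style $L^{1+\epsilon}$ fashion combined with H\"older) is not something this paper spells out or relies on, but the strategic content of your argument coincides with the paper's.
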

The proof of Lemma \ref{maximallemma} is contained in Section 3 of \cite{B1}.  More specifically, see estimate (3.10) on page 959, as well as the construction of the collection of parallelograms called $\calr _1$ there.
Note that this last lemma requires an assumption of the form
\beqa
{1\over {|R|} }  \int _R \one _F > \lambda ;
\eeqa
on the other hand, our assumption on $T\in \calt_{\delta, \sigma}$ is that $\size (T) \lesssim \sigma $ and
\beqa
\left( {1\over {|\topp(T)| } } \sum _{s\in T_1 } 	| \langle \one _F , \varphi _s \rangle | ^2 \right) ^{1\over 2} \gtrsim \sigma ,
\eeqa
where $T_1$ is the maximal $1$-tree in $T$.
The following lemma shows that the second kind of fact implies the first without much loss:
\begin{lemma}\label{intersection}
Let $F\subseteq \bbr ^2$.    Suppose $T$ is a tree with $\size (T) \lesssim \sigma $ and
\beqa
\left( {1\over {|\topp(T)| } } \sum _{s\in T_1 } 	| \langle \one _F , \varphi _s \rangle | ^2 \right) ^{1\over 2} \gtrsim \sigma ,
\eeqa
where $T_1$ is the maximal $1$-tree in $T$.
Then for any $\epsilon >0$,
\beqa
{{| \sigma ^{-\epsilon} \topp (T) \cap F|} \over {  |\sigma ^{-\epsilon} \topp (T) | }}
	\gtrsim \sigma ^{1+\epsilon}.
\eeqa
\end{lemma}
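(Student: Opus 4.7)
The plan is to combine a Schwartz tail estimate with the individual consequence of $\size(T)\lesssim\sigma$ applied to singleton $1$-subtrees. I would set $K=\sigma^{-c\epsilon}$ for a small constant $c>0$, and split $\one_F = \one_{F\cap K\topp(T)} + \one_{F\setminus K\topp(T)}$. The tail contribution would be controlled by noting that each $s\in T_1$ satisfies $s\subseteq C\topp(T)$ (from $s\le \topp(T)$), so Schwartz decay gives $|\varphi_s(x)|\lesssim |s|^{-1/2}K^{-N}$ for $x\notin K\topp(T)$. Truncating $T_1$ to tiles of scale at least $\sigma^{c_1}|\topp(T)|$ (absorbing the discarded small-scale tail via $\size$ applied to its sub-$1$-tree) keeps the number of surviving scales at $\lesssim \log\sigma^{-1}$, so that choosing $N$ large depending on $\epsilon$ makes
\[
\sum_{s\in T_1}|\langle \one_{F\setminus K\topp(T)},\varphi_s\rangle|^2 \;\le\; \tfrac{1}{2}\sum_{s\in T_1}|\langle \one_F,\varphi_s\rangle|^2 .
\]

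For each surviving $s$, the hypothesis $\size(T)\lesssim\sigma$ applied to $\{s\}$ viewed as a $1$-subtree with top of area $\sim|s|$ (whose $\omega$-coordinate is a child of $\omega_s$) yields the pointwise bound $|\langle \one_F, \varphi_s\rangle|\lesssim\sigma|s|^{1/2}$. Combined with the direct support estimate $|\langle g, \varphi_s\rangle|\lesssim |s|^{-1/2}\int g\,\chi_s$ (where $\chi_s$ is an adapted bump to $s$), this interpolates into
\[
|\langle \one_{F\cap K\topp(T)},\varphi_s\rangle|^2 \;\lesssim\; \sigma\int \one_{F\cap K\topp(T)}\,\chi_s .
\]
Summing over $s\in T_1$ and using the multiplicity bound $\sum_{s\in T_1}\chi_s\lesssim\log\sigma^{-1}$ on $K\topp(T)$ (one tile per scale containing $x$) gives $\sigma^2|\topp(T)|\lesssim\sigma\log\sigma^{-1}\cdot|F\cap K\topp(T)|$, hence $|F\cap K\topp(T)|\gtrsim \sigma|\topp(T)|/\log\sigma^{-1}$. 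Dividing by $|\sigma^{-\epsilon}\topp(T)|=\sigma^{-2\epsilon}|\topp(T)|$ and choosing $c$ small enough to absorb the logarithm then yields the desired $\sigma^{1+\epsilon}$ density lower bound.

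The main obstacle is the delicate balance between three parameters: the Schwartz decay exponent $N$, the enlargement $c\epsilon$, and the logarithmic multiplicity from the covering $\{\chi_s\}$. Reaching the exact exponent $1+\epsilon$ with constant depending only on $\epsilon$ may require the refined Bessel inequality of Section~\ref{besselsection} in place of the naive multiplicity bound, to remove the $\log\sigma^{-1}$ loss.
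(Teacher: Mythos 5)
There is a genuine gap at the truncation step, and it is not the logarithm you worry about at the end. To discard the tiles of length below $\sigma^{c_1}|\pi_1(\topp(T))|$ you invoke the size hypothesis on the sub-$1$-trees they form; but organizing those small tiles into $1$-trees whose tops are the $\sim\sigma^{-c_1}$ disjoint parallelograms of that scale inside $C\topp(T)$, the size bound only gives
\[
\sum_{\text{small } s\in T_1}|\langle \one_F,\varphi_s\rangle|^2 \;\lesssim\; \sigma^2\sum_{\text{tops } t}|t| \;\lesssim\; \sigma^2|\topp(T)|,
\]
which is of the same order as the assumed lower bound $\gtrsim\sigma^2|\topp(T)|$, with no small factor, so there is no way to guarantee it is at most half of the total. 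Indeed nothing prevents essentially all of the mass from sitting at a single, arbitrarily small scale (take $|\langle\one_F,\varphi_s\rangle|^2=\sigma^2|s|$ for the tiles of one tiny scale tiling $\topp(T)$ and zero otherwise; this is consistent with $\size(T)\lesssim\sigma$ and with the saturation hypothesis), in which case your truncation discards everything. Without the truncation, the multiplicity bound $\sum_{s\in T_1}\chi_s\lesssim\log\sigma^{-1}$ is false: the correct bound is the number of distinct scales occurring in $T_1$, which is not controlled by $\sigma$ at all, and the per-tile interpolation then loses exactly that factor.

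That loss across many scales is the actual content of the lemma --- it is how one improves the trivial $\sigma^2$ density (which your scheme does recover scale-by-scale) to $\sigma^{1+\epsilon}$ --- and the paper handles it by a different mechanism: the square function $\Delta f=\bigl(\sum_{s\in T}|\langle f,\varphi_s\rangle|^2\one_s/|s|\bigr)^{1/2}$, the localized $L^p$ bound of Lemma \ref{lpsquarefcn} with $p=1+\epsilon$, and crucially the John--Nirenberg-type reverse estimate of Lemma \ref{bmosquarefcn}, $\|\Delta f\|_2\lesssim|\topp(T)|^{-1/2}\int_{C\topp(T)}\Delta f$, which exploits the uniform size bound over \emph{all} subtrees (not just singleton tiles) to make the level sets of $\Delta\one_F$ above $\sigma$ decay geometrically; H\"older then gives $\sigma^{1+\epsilon}|\topp(T)|\lesssim\int\one_F\,\beta_{N,T}^{1+\epsilon}$ and hence the claim. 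Your Schwartz-tail treatment of $F\setminus K\topp(T)$ and the per-tile bound $|\langle\one_F,\varphi_s\rangle|\lesssim\sigma|s|^{1/2}$ are fine as far as they go, and a genuine $\log\sigma^{-1}$ loss would indeed be harmless (absorbable into $\sigma^{-\epsilon}$), but the scale-counting argument as proposed does not close.
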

Lemma \ref{intersection} is proved in Section $\ref{intersectionsection}$;
it follows from $L^p$ and $BMO$-type estimates on a square function related to the notion of $\size$.

Estimate \ref{maximal} deserves more prominent mention.  An estimate in this spirit was proved in \cite{LL2}.  However here we have much better dependence on the parameter $\delta$ due to a rather simple observation.  The argument in \cite{LL2} follows essentially the argument of the density lemma, with an appeal to a maximal theorem to control $|\{ M_{\delta } \one _F > \lambda \} |$.  In our case of a vector field depending on only one variable, the relevant maximal operator was studied by the author in \cite{B1}, \cite{B2}.  However this approach is inefficient.  Instead of combining the density argument with a maximal function estimate (each of which costs in terms of ${1\over {\delta } }$), we appeal to an argument made in \cite{B1}, which directly estimates
\beqa
\sum_{R\in\calr} |R| \lesssim {{ |F| } \over {\delta  \lambda ^{1+\epsilon} } }
\eeqa
for any $\epsilon >0$.  In fact, this estimate was established en route to a covering lemma which implies the maximal theorem.  Interestingly, the improved $L^2$ estimates established in \cite{B2}, which interpolate to give improved $L^p$ estimates, are unhelpful in this setting, precisely because they are estimates on the operator norm, rather than on a sum like the one appearing immediately above.

\section{Balancing the estimates} \label{optimization}
In this section we carry out some computations which allow us to prove \eqref{model}, and hence the main theorem.  We now estimate
\beqa
\sum _{\delta} \sum_{\sigma}  \sum_{T \in \calt _{\delta ,\sigma} }
	 \delta \sigma  | \topp (T) |.
\eeqa
We have two cases.  Recall that $E$ and $F$ are sets with $|F|\leq |E|$.


\subsection{Case 1:  $\delta \geq {{|F|}\over {|E|}} $ }
A quick computation shows that (up to additive $O(\epsilon)$ terms in the exponents)
\begin{itemize}
   \item the maximal estimate is more efficient when $\sigma \geq {{|F|}\over {|E|}}$
  \item the density lemma is more efficient when $\sigma \leq {{|F|}\over {|E|}}$.
  \end{itemize}
\begin{rem}
The maximal estimate is more effective than the size estimate for $\delta \geq {{ |F|}\over {|E|}}$ and $\sigma$ close to ${{|F|}\over{|E|}}$.  Without this, we would not be able to obtain $L^p$ estimates for any $p<2$.
\end{rem}
For the first range, with $\delta $ fixed, we have for any $\epsilon >0$
\beqa
\sum_{\sigma \geq {{|F|}\over {|E|}} }  \sum_{T \in \calt _{\delta ,\sigma} }
	 \delta \sigma  | \topp (T) |
	& \lesssim & \sum_{\sigma \geq {{|F|}\over {|E|}} }
	\delta \sigma
	{{|F|^{1-\epsilon}|E|^{\epsilon} } \over {\delta \sigma ^{1+ \epsilon} } }  \\
	& = & |F|^{1-\epsilon}|E|^{\epsilon}  \sum_{\sigma \geq {{|F|}\over {|E|}} }  {1\over { \sigma ^{\epsilon} } } \\
	&\sim & |F|^{1-2\epsilon} |E|^{2\epsilon}.
\eeqa
Summing this over dyadic $1\gtrsim \delta \geq {{|F|}\over {|E|}}$ gives us a total of
$\lesssim |F|^{1-3\epsilon} |E|^{3\epsilon}$.

For the second range, with $\delta$ fixed, we have
\beqa
\sum_{{{|F|}\over {|E|}} \geq \sigma}  \sum_{T \in \calt _{\delta ,\sigma} }
	 \delta \sigma | \topp (T) |
	&\lesssim & \sum_{{{|F|}\over {|E|}} \geq \sigma}
	 \delta \sigma  {{|E|} \over {\delta } } \\
	& = & \sum_{{{|F|}\over {|E|}} \geq \sigma}
	 \sigma |E| \\
	&\sim & |F|.
\eeqa
Once again, summing this over dyadic $1\gtrsim \delta \geq {{|F|}\over {|E|}}$ gives us a total of
$\lesssim |F|^{1-\epsilon} |E|^{\epsilon}$.


\subsection{Case 2:  $\delta \leq  {{|F|}\over {|E|}} $ }

In this case, the size and density estimates alone will be enough for us.
A quick computation shows that 
\begin{itemize}
  \item The size estimate is most efficient when $\sigma \geq \sqrt{\delta {{|F|}\over {|E|}}  }$
  \item The density estimate is most efficient when $\sigma \leq \sqrt{\delta  {{|F|}\over {|E|}}  }$ .
\end{itemize}

We decompose our sum over $\sigma$ into these two ranges.  For the first range, we have
\beqa
\sum _{\sigma \geq \sqrt{\delta {{|F|}\over {|E|}}  } }
	\delta \sigma  {{|F|}\over {\sigma ^{2}} }
	&=& |F|\delta \sum _{\sigma \geq \sqrt{\delta {{|F|}\over {|E|}}  } }
		{1\over{\sigma  } } \\
	&\lesssim \sqrt{ |F||E|\delta }.
\eeqa
Summing over $\delta \leq {{|F|}\over {|E|}}$ gives us a total of
$\lesssim |F| \lesssim |F|^{1-\epsilon} |E|^{\epsilon} $, since $|F|\leq |E|$.

For the second range, we have
\beqa
\sum _{\sigma \leq \sqrt{\delta {{|F|}\over {|E|}} } } \delta \sigma
	{{|E|} \over {\delta } }
	&\sim &  |E| \sum _{\sigma \leq \sqrt{\delta {{|F|}\over {|E|}} } } \sigma  \\
	&\sim &\sqrt{ |F||E|\delta }.
\eeqa
Once again, summing over $\delta \leq {{|F|}\over {|E|}}$ gives us a total of
$\lesssim |F| \lesssim |F|^{1-\epsilon} |E|^{\epsilon} $, since $|F|\leq |E|$.

This completes the proof of the main estimate $\eqref{model}$ modulo the proofs of the lemmas, which are given in the following sections.


\section{Density lemma} \label{densitysection}
In this section we prove Lemma \ref{densitylemma}.  Let $\calr$ be as in the hypotheses of the lemma.  
For $k=0,1,2,\dots, $ let $\calr _k$ be the collection of $R \in \calr$ such that
\beqa
| u^{-1} (\omega _R) \cap 2^k R \cap E | \geq {1\over {100}} \delta 2^{20k} |2^k R| ,
\eeqa
and such that $k$ is the least integer with this property.  Note $\calr = \cup _k \calr _k$, since if $R\in \calr$ but 
$R\not\in \cup_k \calr _k$, then 
\beqa
\dense (R) & \leq & \int _{E_R } \chi _R ^{(1)} \\ 
& \leq & \sum _{k=0} ^{\infty} | u^{-1} (\omega _R) \cap 2^k R \cap E | 2^{-100k} {1\over |R|} \\
& \leq & {1\over 100} {\delta \over |R|} \sum _{k=0} ^{\infty} 2^{25k} |R| 2^{-100k} \\
& \leq & {\delta \over 50}.
\eeqa

We now run an iterative selection procedure to find a subset of $\calr _k$ such that the parallelograms $2^k R$ are disjoint:

Initialize
\beqa
STOCK &=& \calr _k \\
\widetilde{\calr _k} &=& \emptyset .
\eeqa
While $STOCK \neq \emptyset$, choose $R$ with maximal length, let
\beqa
\cala _R = \{ R' \in STOCK
	\colon 2^k R' \cap 2^k R \neq \emptyset \text{  and  } \omega_{R'} \cap \omega _R \neq \emptyset \},
\eeqa
and update
\beqa
STOCK: &=& \calr _k \setminus \cala _R \\
\widetilde{\calr _k} &=& \widetilde{\calr _k}\cup \{R\}.
\eeqa
Note that the parallelograms in $\cala _R$ are pairwise disjoint by the pairwise incomparability of parallelograms in $\calr$, 
and because
$ \omega_{R'} \cap \omega _R \neq \emptyset $  for $R' \in \cala _R$.  Hence, using the definition of $\calr _k$, we have
\beqa
\sum _{R\in \calr _k} |R| &=& \sum _{R \in \widetilde{\calr _k} } \sum _{R' \in \cala _R} |R'| \\
& \lesssim & 2^{2k} \sum _{R \in \widetilde{\calr _k} }|R| \\
&\lesssim & 2^{2k} 2^{-20k} {1\over \delta} \sum_{R \in \widetilde{\calr _k} } | u^{-1} (\omega _R) \cap 2^k R \cap E | \\
&\lesssim & 2^{-18k} {1\over \delta} |E|,
\eeqa
where in the last inequality we have used the fact that the parallelograms $2^k R$ are pairwise incomparable, and that $\omega _R = \omega _{2^k R}$, so that the sets $\{ u^{-1} (\omega _R) \cap 2^k R \}$ are disjoint.  Finally, we sum over $k$ to obtain the result.

\section{Proofs of maximal and density estimates} \label{maximalsection}

We now look more closely at the collections $\calt_{\delta, \sigma}$.  For the remainder of this section we regard $\delta$ and $\sigma$ as fixed.  Notation in this section is understood to depend on both $\delta$ and $\sigma$.
(So, for example, $\calt = \calt _{\delta, \sigma}$.)  We begin by isolating a collection of tiles with density $\delta$.  First, let
\beqa
\tilde{\calr} = \{ R\in \calu \colon \dense (R) \sim \delta \}.
\eeqa
We now find a maximal subset of $\tilde{\calr}$ whose elements are pairwise incomparable.  Initialize:
\beqa
STOCK &=& \tilde{\calr} \\
\calr &=& \emptyset.
\eeqa
While $STOCK \neq \emptyset$, choose $R$ of maximal length in $STOCK$.  Define
\beqa
\cala _R = \{ R' \in STOCK \colon R' \leq R \},
\eeqa
and update
\beqa
STOCK &=& STOCK \setminus \cala _R \\
\calr &=& \calr \cup \{R\}.
\eeqa
When the loop terminates, elements of $\calr$ are pairwise incomparable (under $\leq$), and $\calr$ is maximal with respect to this property.  
\begin{rem}
Recall that for $T \in \calt$, $\udense (\topp (T)) \sim \delta$, but maybe $\dense (\topp (T))$ is much less than $\delta $.  This makes the maximal Lemma \ref{maximallemma} unavailable to us.  Note that several ingredients are required, and $\topp (T)$ may lack the $\dense$ required.  The work in this section goes to organizing the trees in such a way that we can legitimately appeal to Lemma \ref{maximallemma}.
\end{rem}
Next we associate to each tree
$T\in \calt $ a parallelogram $R_T \in \calr$.  This requires a few steps.
Note that for each $s\in \cup _{T\in \calt} T$, we have $\udense (s)\sim \delta$.  By Lemma \ref{sizeiteration}, we know that $\topp (T) \in T$ for each 
$T\in \calt$.  Hence $\udense ( \topp (T) ) \sim \delta$.  This means there exists a parallelogram $\tilde{R} \in \tilde{\calr}$ such that $\dense (R) \sim \delta$ and such that 
$\topp (T) \leq \tilde{R}$.  (This is the reason why it is convenient to have 
$\topp (T) \in t$.)  Further, for each $\tilde{R}\in \tilde{\calr}$, there is $R \in \calr$ (again, possibly not unique) such that $\tilde{R} \leq R$.  Hence we may assign to each $T\in \calt$ some $R\in \calr$, and there is $\tilde{R}$ such that $\topp (T) \leq \tilde{R} \leq R$.  (Of course there may be more than one $R$ to choose from for each $T$; choose one!)  Call this parallelogram $R_T$.  Now for each $R\in \calr$, define
\beqa
\calt _R = \{ T \in \calt \colon R_T = R \}.
\eeqa
By construction,
\beqa
\calt  = \cup _{R\in\calr} \calt _R.
\eeqa
Our goal now is to control
\beqa
\sum _{R\in\calr} \sum _{T\in \calt _R} |\topp (T)| .
\eeqa
First, we'll show that for all $R \in \calr$, 
\beqa
\sum _{T\in \calt _R} |\topp (T)| \lesssim |R|.
\eeqa
The collection $\{ \topp (T) \colon T\in \calt _R \}$ need not be pairwise disjoint, but we do have the following satisfactory substitute. 
\begin{claim} \label{disjointtops}
There exists $\overline{\calt _R} \subseteq \calt_R $ such that $\{\topp (T) \colon T\in \overline{\calt _R} \}$ is pairwise disjoint and such that
\beqa
\sum _{T\in\calt_R} |\topp (T)| \lesssim \sum _{T\in \overline{ \calt_R} } |\topp (T)| .
\eeqa
\end{claim}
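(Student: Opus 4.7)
The plan is to prove the claim by a Vitali-type greedy selection. I would order the trees in $\calt_R$ by $|\topp(T)|$ in decreasing order and process them one at a time: a tree $T$ is added to $\overline{\calt_R}$ if $\topp(T)$ is disjoint from the tops of every previously added tree, and discarded otherwise. By construction the resulting $\overline{\calt_R}$ has pairwise disjoint tops.

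The key geometric input is the following observation. Suppose $T, T' \in \calt_R$ satisfy $\topp(T) \cap \topp(T') \ne \emptyset$ and $|\topp(T')| \ge |\topp(T)|$. Since both $\omega_{\topp(T)}$ and $\omega_{\topp(T')}$ are dyadic intervals containing $\omega_R$, they are nested, and the length comparison forces $\omega_{\topp(T')} \subseteq \omega_{\topp(T)}$. Because the slopes of the two parallelograms are the centers of their respective frequency intervals, those slopes differ by at most $|\omega_{\topp(T)}|$; hence the vertical drift induced by the slope mismatch over the length $w/|\omega_{\topp(T)}|$ of $\topp(T)$ is at most $w$, comparable to the common width. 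Combined with an intersection point, this forces $\topp(T) \subseteq C\topp(T')$. Consequently every discarded top lies in a $C$-dilate of some selected $T^* \in \overline{\calt_R}$.

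To convert this containment into the required measure inequality, I would group the discarded tops absorbed by each $T^*$ according to the value of $\omega_{\topp(T)}$, which for $T \in \calt_R$ must lie in the chain of dyadic ancestors of $\omega_R$. At each fixed value of $\omega$, the corresponding tops live in the single partition $\calu_\omega$ and are automatically pairwise disjoint, so their contribution is bounded by $|C\topp(T^*)|$. The main obstacle is then the bounded-multiplicity step: the number of $\omega$-levels that contribute nontrivially within $C\topp(T^*)$ must be bounded by an absolute constant. This will rely on the sorting algorithm from Lemma \ref{sizeiteration} that produced $\calt_{\delta,\sigma}$, specifically on the fact that picking a tree $\tilde{T}$ removes every tile $s \le \topp(\tilde{T})$ from the stock, thereby constraining how many nested tops of $\calt_R$ can coexist around a single $\topp(T^*)$. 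I expect this to be the most delicate step, since a naive scale count produces only a logarithmic bound, whereas a uniform absolute constant is needed for the claimed comparability.
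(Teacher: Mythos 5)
Your greedy selection and the geometric observation that every discarded top lies in $C\topp(T^*)$ for some selected $T^*$ match the first half of the paper's argument, but the step you yourself flag as unresolved — getting an absolute constant rather than a logarithm — is precisely the content of the claim, and the mechanism you propose for it is not the one that works. Grouping the discarded tops by the scale of $\omega_{\topp(T)}$ and using disjointness within each $\calu_\omega$ gives $\lesssim |\topp(T^*)|$ \emph{per scale}, and there is no absolute bound on the number of scales of tops of trees in $\calt_R$ that can cluster around a single selected top: the selection in Lemma \ref{sizeiteration} orders trees by the most clockwise $c(\omega_{\topp(T)})$, not by length, so a long top need not be swept out of the stock when a shorter overlapping top is selected earlier. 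Hence the "bounded multiplicity of $\omega$-levels" you hope to extract from the sorting algorithm is not available, and this route stalls at a logarithmic loss.

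The paper closes the gap not by counting scales but by a size (orthogonality) argument run by contradiction. Fix a selected tree $T$ with maximal top and let $\cala_T$ be the discarded trees whose tops meet $\topp(T)$. Each $T'\in\cala_T$ was produced by the algorithm of Lemma \ref{sizeiteration}, so its maximal $1$-tree carries mass $\sum_{s\in T'_1}|\langle \one_F,\varphi_s\rangle|^2 \gtrsim \sigma^2|\topp(T')|$, and all these tiles sit inside $C^2\topp(T)$ with $\omega_{s,1}\cap\omega_{\topp(T)}=\emptyset$. Therefore the union $\bigcup_{T'\in\cala_T}T'_1$ can be split into $O(C^2)$ $1$-trees whose tops have the dimensions of $\topp(T)$. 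If one had $\sum_{T'\in\cala_T}|\topp(T')|\geq C'|\topp(T)|$ with $C'$ large, the pigeonhole principle would force one of these $O(C^2)$ $1$-trees to have $\size \gg \sigma$, contradicting the fact that all tiles were drawn from a collection of size $\lesssim\sigma$. It is this interplay between the per-tree size \emph{lower} bound (guaranteed by the selection) and the ambient size \emph{upper} bound that yields $\sum_{T'\in\cala_T}|\topp(T')|\lesssim|\topp(T)|$ with an absolute constant; your purely geometric/scale-counting scheme has no access to this cancellation, so as written the proof is incomplete at its decisive step.
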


\begin{proof}

Initialize
\beqa
STOCK &=& \calt _R \\
\overline{\calt _R } &=& \emptyset .
\eeqa
While $STOCK \neq \emptyset$, choose $T \in STOCK$ such that $\topp (T)$ is of maximal length.  Then define
\beqa
\cala _T =\{ T' \in STOCK \colon \topp (T' ) \cap \topp (T) \neq \emptyset \} ,
\eeqa
and update
\beqa
STOCK &:=& STOCK \setminus \cala _T \\
\overline{\calt _R } &:=& \overline{\calt _R  } \cup \{ T \} .
\eeqa
We stop when $STOCK$ is empty.  By construction, the tops of the trees in $\overline{\calt _R }$ are pairwise
disjoint.  Now we show that
\beqa
\sum _{T' \in \cala _T } |\topp (T') | \leq C' |\topp (T)|.
\eeqa
With this we'll know that
\beqa
\sum _{T \in \calt _R} |\topp (T)| =
\sum_{T\in \overline{\calt _R}} \sum _{T' \in \cala _T} |\topp (T')| \leq C' \sum _{T\in \overline{\calt _R}} |\topp (T) |.
\eeqa
Suppose not.  Define $S = \cup _{T' \in \cala _T }  T_1 ' $, where for a tree $T$, define $T_1$ to be the maximal $1$-tree contained in $T$. 
We claim $S$ can be partitioned into a small number of trees $S_j$, $j=1, \dots, 10C^2$,  with each a $1$-tree.  To see that they are $1$-trees, suppose $s\in T' \in \cala _T$.  Then $\omega _{s,2} \supseteq \omega _{\topp(T')} \supseteq \omega _{\topp(T)}$, so $\omega_{s,1} \cap \omega_{\topp (T)} = \emptyset$.  To see that we only need a few trees, just note that for each $T' \in \cala _T$, 
$\topp (T')\subseteq C (\topp (T))$.  Then since each $s\in T'$ satisfies $s\subseteq  C (\topp (T'))$, we know that $S$ can be partitioned into $\sim C^2$ subtrees $S_j$ by considering (possibly overlapping) tiles in 
$C^2\topp (T )   $ of height $ w$ and length the same as length of $\topp (T)$.  
%
Hence
\beqa
\sum _{j = 1} ^{10 C^2} \sum _{s\in S_j} | \langle f, \varphi _s \rangle | ^2
&\geq & \sum _{T' \in \cala _T} \sum _{s\in T' _1 } | \langle f, \varphi _s \rangle | ^2 \\
	&\geq & {1\over 4} \sum _{T' \in \cala _T} \sigma ^2 |\topp (T') | \\
	&\geq & \sigma ^2 {{C'} \over 4} |\topp (T)| \\
\eeqa
Provided $C'$ is taken large enough (with respect to a universal constant $C$ mentioned in Section \ref{defs}), one of the trees $S_j$ satisfies
 $\size (S_j) \geq 10\sigma $, which is impossible since the trees $T\in \calt _R$ were chosen from a collection with $\size $ less than $\sigma$.  This proves the second claim about $\overline{\calt _R}$.
\end{proof}


\subsection{Proof of the density estimate}
We are already in position to prove Estimate \ref{density}.  Note that the collection $\calr$ constructed above is of pairwise incomparable parallelograms of uniform width and $\dense \sim \delta$.  Hence the previous claim, together with Lemma \ref{densitylemma}, implies
\beqa
\sum _{R\in\calr} \sum _{T\in\calt_R} |\topp (T) | & \lesssim & \sum _{R\in\calr} |R| \\
& \lesssim &{{|E|}\over {\delta } }.
\eeqa


\subsection{Proof of the maximal estimate}
The proof of Estimate \ref{maximal} is a bit more involved. 
For the rest of this section, fix $\epsilon >0$. The first key step is to sort the parallelograms in $\calr$ by how heavily they are covered by the trees in $\calt_R$.  Specifically:  for integers $j\geq 0$, define
\beqa
\calr _j = \{ R\in\calr \colon \sum _{T\in\calt _R } | \topp (T)| \sim 2^{-j} |R| \}.
\eeqa
Since our goal is to control 
\beqa
\sum _{R\in \calr} \sum _{T \in \calt_R } |\topp (T)| 
\sim \sum _j \sum _{R\in \calr _j} \sum _{T\in \calt _R} |\topp (T)|
\sim \sum _j 2^{-j} \sum _{R\in \calr _j} |R|,
\eeqa
it is enough to estimate $\sum _{R\in \calr _j} |R|$ with suitable dependence on $j$.

In order to apply maximal technology (in the form of Lemma \ref{maximallemma}), we must find parallelograms $R$ that heavily intersect $F$, and that also contain a large subset on which $v$ points in the direction of $R$.  Because of the Schwartz tails in the definition of $\dense$, we do not know that each
$R\in\calr_j$ satisfies
\beqa
| u^{-1} (\omega _R) \cap E \cap R | \gtrsim \delta |R|.
\eeqa
Rather, we know that
\beqan \label{kcondition}
| u^{-1} (\omega _{R}) \cap E \cap 2^k  R | \gtrsim 2^{20k} \delta |R|
\eeqan
for some integer $k\geq 0$, as in Section \ref{densitysection}.  Define $\calr _{j,k}$ to be the set of $R\in \calr _j$ such that condition \eqref{kcondition} holds for $R$ but such that it does not hold with any smaller $k$.
Similarly, we cannot conclude that $R$ itself intersects $F$ heavily.  Recall that Lemma \ref{intersection} guarantees that $F$ intersects $\sigma ^{-\epsilon}\topp (T)$ heavily, whenever $T \in \calt _{\delta ,\sigma }$; we cannot however, conclude that $F$ intersects $\topp (T)$ itself.  This causes some minor differences in the treatment of the cases $2^k \geq \sigma ^{-\epsilon}$ and
$2^k \leq \sigma ^{-\epsilon}$ that the reader should not take too seriously.
It suffices then to control sums like
\beqa
\sum _{R\in\calr_{j,k}} |R|
\eeqa
with suitable dependence on $k$ and $j$.


\subsubsection{Case1:  $2^k \geq \sigma ^{-\epsilon}$}
We want to apply Lemma \ref{maximallemma} to the collection $\calr _{j,k}$.  The defining condition of $\calr _{j,k}$ gives us the kind of information needed by the hypothesis \eqref{densecond}.  The following claim gives us the kind of information needed by the hypothesis \eqref{magnitudecond}.
\begin{claim} \label{hard}
For $R \in \calr _{j,k}$
\beqa
{{ |F \cap 2^k R | } \over  {|2^k R|} } \gtrsim 2^{-j} \sigma ^{1+3\epsilon}
				\left( {{\sigma ^{-\epsilon} } \over {2^k} } \right) ^2 .
\eeqa
\end{claim}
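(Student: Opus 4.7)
My plan is to apply Lemma~\ref{intersection} individually to each tree in the disjoint-top subcollection $\overline{\calt_R}\subseteq\calt_R$ furnished by Claim~\ref{disjointtops}, and then to paste the resulting per-tree lower bounds on $|F\cap\sigma^{-\epsilon}\topp(T)|$ into a single lower bound on $|F\cap 2^kR|$ by means of a Vitali-type selection.

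First, Claim~\ref{disjointtops} produces $\overline{\calt_R}$ with pairwise disjoint tops and $\sum_{T\in\overline{\calt_R}}|\topp(T)|\gtrsim 2^{-j}|R|$ (using the definition of $\calr_j$). Applying Lemma~\ref{intersection} to each such $T$ with the same $\epsilon$ as in the claim yields
\beqa
|F\cap\sigma^{-\epsilon}\topp(T)| \;\gtrsim\; \sigma^{1+\epsilon}\,|\sigma^{-\epsilon}\topp(T)| \;=\; \sigma^{1-\epsilon}\,|\topp(T)|,
\eeqa
while the Case~1 hypothesis $2^k\geq\sigma^{-\epsilon}$, combined with the containment $\topp(T)\subseteq CR$ (a consequence of $\topp(T)\leq R_T=R$), places every $\sigma^{-\epsilon}\topp(T)$ inside a bounded multiple of $2^kR$.

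Next I would apply the standard greedy Vitali-type selection to $\{\sigma^{-\epsilon}\topp(T)\}_{T\in\overline{\calt_R}}$, processing parallelograms in order of decreasing length and retaining each one that is disjoint from all previously retained ones. This produces $\calt'\subseteq\overline{\calt_R}$ with $\{\sigma^{-\epsilon}\topp(T)\}_{T\in\calt'}$ pairwise disjoint and satisfying the Vitali covering property $\bigcup_{T\in\overline{\calt_R}}\sigma^{-\epsilon}\topp(T)\subseteq\bigcup_{T\in\calt'} C\,\sigma^{-\epsilon}\topp(T)$. Using the monotone inclusion $\topp(T)\subseteq\sigma^{-\epsilon}\topp(T)$ and the disjointness of the original tops, I would deduce
\beqa
2^{-j}|R| \;\lesssim\; \sum_{T\in\overline{\calt_R}}|\topp(T)| \;\leq\; \bigg|\bigcup_{T\in\overline{\calt_R}}\sigma^{-\epsilon}\topp(T)\bigg| \;\lesssim\; \sum_{T\in\calt'}|\sigma^{-\epsilon}\topp(T)|,
\eeqa
and then, by disjointness inside $\calt'$ and the dilated-top containment,
\beqa
|F\cap 2^kR| \;\geq\; \sum_{T\in\calt'}|F\cap\sigma^{-\epsilon}\topp(T)| \;\gtrsim\; \sigma^{1+\epsilon}\sum_{T\in\calt'}|\sigma^{-\epsilon}\topp(T)| \;\gtrsim\; \sigma^{1+\epsilon}\cdot 2^{-j}|R|.
\eeqa
Dividing by $|2^kR|=2^{2k}|R|$ recasts the right-hand side as $2^{-j}\sigma^{1+3\epsilon}(\sigma^{-\epsilon}/2^k)^2$, which is the claim.

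The main obstacle will be the Vitali step, since $\{\sigma^{-\epsilon}\topp(T)\}$ is a family of parallelograms of varying lengths, slopes, and aspect ratios, and the usual Vitali-for-balls covering lemma does not apply verbatim. The constraint $\topp(T)\leq R$ does confine all slopes to (essentially) $\omega_R$ and all tops to $CR$, so the family is uniform enough for the greedy selection to deliver both the pairwise disjointness within $\calt'$ and the constant-dilation covering property. Carrying out this geometric bookkeeping precisely --- including verifying that the dilated tops really do fit in a bounded multiple of $2^kR$, and that the inflation constant is universal so that it can be absorbed into the Case~1 threshold --- is the only nontrivial technical point in the argument.
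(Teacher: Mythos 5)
Your proposal is correct and follows essentially the same route as the paper: apply Lemma~\ref{intersection} to each tree in the disjoint-top family $\overline{\calt_R}$ from Claim~\ref{disjointtops}, restore disjointness of the dilated tops $\sigma^{-\epsilon}\topp(T)$ by a covering selection, sum, and use $2^k\geq\sigma^{-\epsilon}$ to place everything inside a bounded dilate of $2^kR$. The Vitali step you single out as the technical point is precisely the ``standard covering argument'' the paper invokes, justified there as you justify it --- all tops have essentially the same orientation and uniform width, so the greedy selection yields both disjointness and the bounded-dilation covering property.
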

We postpone the proof of the claim until the end of this section.
With the claim, the only ingredient still needed to apply Lemma \ref{maximallemma} is the pairwise incomparability of the parallelograms in question.  We arrange this with the usual type of sorting algorithm.
Initialize
\beqa
STOCK &=& \calr_{j, k} \\
\widetilde{\calr_{j, k}} &=& \emptyset .
\eeqa
While $STOCK \neq \emptyset$, choose $R$ with maximal length, let
\beqa
\cala _R = \{ R' \in STOCK
	\colon 2^k R' \cap 2^k R \neq \emptyset \text{  and  } \omega_{R'} \cap \omega _R \neq \emptyset \},
\eeqa
and update
\beqa
STOCK: &=& \calr_{j, k} \setminus \cala _R \\
\widetilde{\calr_{j, k}} &=& \widetilde{\calr _{j,k} }\cup\{R\}.
\eeqa
(Note $\omega _R = \omega _{CR}$ for any $C$.)  Since the parallelograms $R' \in \cala _R$ are pairwise incomparable, we know they are in fact disjoint (see earlier in Section \ref{maximalsection} for a similar argument), so
\beqa\sum _{R' \in \cala _R} |R'| \lesssim |2^k R|.
\eeqa
Hence
\beqa
\sum_j \sum _k \sum _{R \in \calr_{j, k} } \sum _{T \in \calt _R} |\topp (T)|
&\lesssim &
\sum_j \sum _k \sum _{R \in \calr_{j, k} }  2^{-j}|R| \\
&\lesssim &
\sum_j \sum _k \sum _{R \in \widetilde{\calr_{j, k}} } \sum _{R' \in \cala _R} 2^{-j} |R'| \\
&\lesssim & \sum_j \sum _k \sum _{R \in \widetilde{\calr_{j, k}} }  2^{-j}  | 2^k R| .
\eeqa
We now focus our attention on
\beqa
\sum _{R \in \widetilde{\calr_{j, k}  } }  2^{-j}  | 2^k R|.
\eeqa
Claim \ref{hard} together with the defining condition for parallelograms in $\calr _{j,k}$ allows us to apply Lemma \ref{maximallemma}, with ``$\delta$" in  \eqref{densecond} being $2^{20k}\delta$ and ``$\lambda$" in \eqref{magnitudecond} 
being $2^{-j}2^{-2k} \sigma ^{1+ O(\epsilon)}$, as in Claim \ref{hard}.  The huge gain in $k$ from \eqref{kcondition} allows us to sum the contributions from the various $\calr_{j,k}$.  More specifically, Lemma \ref{maximallemma} yields
\beqa
\sum _{R\in\widetilde{\calr_{j,k}} } |2^k R|  \lesssim  {1\over {2^{20k} \delta }} 
	{{|F|}\over { (\sigma ^{1+\epsilon } 2^{-2k}2^{-j})^{1+\epsilon} } }
\eeqa
This obviously sums in $k$ to prove
\beqa
\sum _{R\in \calr _{j} } \sum _{T \in \calt _R}  |\topp (T)| \lesssim
\sum _{R\in\calr_{j}} 2^{-j}| R|  \lesssim  {1\over {\delta }} {{2^{\epsilon j} |F|}\over { (\sigma ^{1+\epsilon } )^{1+\epsilon} } };
\eeqa
this estimate is effective for small $j$.
Estimate \ref{density} tells us that for any $j$,
\beqa
\sum _{R\in \calr _{j} } \sum _{T \in \calt _R}  |\topp (T)| \lesssim 
\sum _{R\in\calr_{j}} 2^{-j}| R| \lesssim  2^{-j}{{|E|}\over {\delta }};
\eeqa
this estimate is effective for large $j$.
It remains to balance these two estimates:
\beqa
\sum_{j\geq 0} \sum _{R \in \calr_j } \sum _{T \in \calt _R} |\topp (T)|
&=& 	\sum _{j \leq \log {{ |E|\sigma } \over {|F|} } }
		\sum _{R \in \calr_j } 2^{-j} |R|
	+ \sum _{j \geq \log {{ |E|\sigma } \over {|F|} } }
		\sum _{R \in \calr_j } 2^{-j} |R|  \\
&\lesssim & \sum _{j \leq \log {{ |E| \sigma } \over {|F|} } }
		 2^{\epsilon j}
		{{|F|} \over {\delta \sigma ^{(1+\epsilon) ^2 }  } }
	+ \sum _{j \geq \log {{|E| \sigma } \over {|F|} } }
		 2^{-j}  {{|E|} \over {\delta } }  \\
&\lesssim &   {{|F|^{1-\epsilon }|E|^{\epsilon} } \over {\delta \sigma ^{(1+\epsilon) ^2 } } }\\
&\lesssim & {{|F|^{1-5\epsilon }|E|^{5\epsilon} } \over {\delta \sigma ^{1 + 5 \epsilon } } },
\eeqa
\begin{rem}
Of course the first sum above is empty when $\sigma \leq {{|F|} \over {|E|}}$; in this case we recover the density estimate.  Recalling Section \ref{optimization}, we see that in this range of $\sigma$ we have no need for the maximal estimate anyway.
\end{rem}
This completes the proof of the maximal estimate, except for the proof of Claim \ref{hard}, which we turn to now.  
\begin{proof}[Proof of Claim \ref{hard}]
For each $T \in \overline{\calt_R}$, Lemma \ref{intersection} tells us that
\beqa
{{| \sigma ^{-\epsilon} \topp (T) \cap F |} \over {|\sigma ^{-\epsilon} \topp (T)|} } \geq \sigma ^{1+ \epsilon} .
\eeqa
One minor technical problem is that the parallelograms $\sigma ^{-\epsilon} \topp (T)$ might not be disjoint.  But since all parallelograms $\{ \topp (T) \colon T \in \overline{ \calt _R } \}$ have (essentially) the same orientation, we may use a standard covering argument to select a subset $\widetilde{\calt _R} $ of $\overline{\calt _R}$ such that
\beqa
\{ \sigma ^{-\epsilon } \topp (T) \} _{T \in \widetilde{\calt _R} }
\eeqa
is pairwise disjoint, and such that
\beqa
| \bigcup _{T \in \widetilde{\calt _R} } \sigma ^{-\epsilon } \topp (T) |
 \gtrsim  | \bigcup _{T \in \overline{\calt _R} } \sigma ^{-\epsilon } \topp (T) |.
\eeqa
Hence
\beqa
|F \cap  C \sigma ^{-\epsilon } R | & \gtrsim &
	| \bigcup _{T \in \widetilde{\calt _R} } \sigma ^{-\epsilon } \topp (T) \cap F | \\
 & = & \sum _ {T \in \widetilde{\calt _R} } |\sigma ^{-\epsilon } \topp (T) \cap F |  
\hspace{1cm} \text{     by disjointness}  \\
& \gtrsim & \sigma ^{1+ \epsilon} \sum _ {T \in \widetilde{\calt _R} } |\sigma ^{-\epsilon } \topp (T)  | 
\hspace{1cm} \text{    by Lemma \ref{intersection} }\\
& \gtrsim & \sigma ^{1+ \epsilon} | \bigcup _{T \in \widetilde{\calt _R} } \sigma ^{-\epsilon } \topp (T) | \\
& \gtrsim & \sigma ^{1+ \epsilon} | \bigcup _{T \in \overline{\calt _R} } \sigma ^{-\epsilon } \topp (T) | \\
& \gtrsim & \sigma ^{1+ \epsilon} | \bigcup _{T \in \overline{\calt _R} } \topp (T) | \\
& \gtrsim & \sigma ^{1+ \epsilon} \sum _{T \in \overline{\calt _R} } |\topp (T) | 
\hspace{1cm} \text{    by disjointness}\\
& \gtrsim & \sigma ^{1+ \epsilon} \sum _{T \in \calt _R } |\topp (T) | 
\hspace{1cm} \text{    by Claim \ref{disjointtops} }\\
& \gtrsim & \sigma ^{1+ \epsilon} 2^{-j} |R|
\hspace{1cm} \text{    by definition of $\calr _j$}.
\eeqa
This finishes the proof of Claim $\ref{hard}$.  
\end{proof}


\subsubsection{ Case 2:  $2^k \leq \sigma ^{-\epsilon}$}
This section is very similar to the previous section.
As in the last section, we verify the hypotheses of Lemma \ref{maximallemma} for a suitable collection.

We consider all of these collections $\calr _{  j,k}$ together.  Let
\beqa
\calr _{ j, small} =\bigcup _{0\leq k\leq \log \sigma ^{-\epsilon} } \calr _{j,k} .
\eeqa
Now we sort the tiles as before:
Initialize
\beqa
STOCK &=& \calr _{ j, small} \\
\widetilde{\calr _{ j, small}} &=& \emptyset .
\eeqa
While $STOCK \neq \emptyset$, choose $R$ with maximal length, let
\beqa
\cala _R = \{ R' \in STOCK
	\colon \sigma ^{-\epsilon} R' \cap \sigma ^{-\epsilon} R \neq \emptyset \text{  and  } \omega_{R'} \cap \omega _R \neq \emptyset \},
\eeqa
and update
\beqa
STOCK: &=& \calr _{small} \setminus \cala _R \\
\widetilde{\calr _{ j, small}} &=& \widetilde{\calr _{ j, small}}\cup \{R\}.
\eeqa
As before, we have
\beqa
\sum _{R\in \calr_{ j, small} } |R|
	\leq \sum _{R\in \widetilde{\calr _{ j, small}}  }  \sum _{R' \in \cala_R} |R'|
	\leq \sum _{R\in \widetilde{\calr _{ j, small}}  } |\sigma ^{-\epsilon} R |.
\eeqa
We again note several properties of the parallelograms in $\widetilde{\calr _{ j, small}}$.  First, they are pairwise incomparable.  Second, they satisfy the estimate
\beqa
{{ |\sigma ^{-\epsilon} R \cap E \cap u^{-1} (\omega _{\sigma ^{-\epsilon} R} )| } \over {|\sigma ^{-\epsilon} R| } } 
	\gtrsim \sigma ^{2\epsilon} \delta .
\eeqa
This gives us the density estimate
\beqan \label{densityest2}
\sum _{R\in \widetilde{\calr _{ j, small}}  }   |\sigma ^{-\epsilon} R|
	\lesssim {{|E|} \over {\sigma ^{2\epsilon} \delta } },
\eeqan
from a direct application of Lemma \ref{densitylemma}.
Third, just as in Claim \ref{hard}, they satisfy the estimate
\beqa
{{ | \sigma ^{-\epsilon} R \cap F| } \over { |\sigma ^{-\epsilon} R|} } \gtrsim 2^{-j} \sigma ^{1+\epsilon}.
\eeqa
So by Lemma \ref{maximallemma}, we have 
\beqan \label{maximalest2}
\sum _{R\in \widetilde{\calr _{ j, small}}  } |\sigma ^{-\epsilon} R |
&\lesssim &  { {|F}| \over { \delta\left(2^{-j} \sigma ^{1+\epsilon} \right) ^{1+\epsilon}  } }.
\eeqan
As before, we split the sum into large and small $j$ and use $\eqref{densityest2}$ and $\eqref{maximalest2}$, respectively:
\beqa
\sum_{j\geq 0} \sum _{R \in \calr_{ j, small} } \sum _{T \in \calt _R} |\topp (T)|
&=& 	\sum _{j \leq \log {{ |E|\sigma } \over {|F|} } }
		\sum _{R \in \calr_{ j, small} } 2^{-j} |R| \\
	&+& \sum _{j \geq \log {{ |E|\sigma } \over {|F|} } }
		\sum _{R \in \calr_{ j, small} } 2^{-j} |R|  \\
&\lesssim & \sum _{j \leq \log {{ |E|\sigma } \over {|F|} } }
		 2^{\epsilon j}
		{{|F|} \over {\delta \sigma ^{(1+ \epsilon)^2 } } }  \\
	&+& \sum _{j \geq \log {{ |E| \sigma } \over {|F|} } }
		 2^{-j}  {{|E|} \over {\sigma ^{2\epsilon}  \delta } }  \\
&\lesssim &  {{|F|^{1-5\epsilon }|E|^{5\epsilon} } \over {\delta \sigma ^{1+5\epsilon } } },
\eeqa
which is what we needed, since $\epsilon$ is arbitrary.


\section{Large size implies large intersection with $F$} \label{intersectionsection}
\begin{rem}
The title of the section is technically a bit misleading, since $\size (T)$ is actually the supremum over all subtrees of $T$ of an 
$l^2$-type norm; nevertheless, the trees obtained through the selection procedure in Section \ref{organizationsection} all satisfy the property that the full tree (essentially) achieves this supremum.
\end{rem}
To prove Lemma \ref{intersection}, we need the following notation.  For a fixed $1$-tree $T$, define the operator
\beqa
\Delta (f) = \left( \sum_{s\in T} |\langle f , \varphi _s \rangle| ^2 {{\one _s} \over {|s|}}  \right)^{1\over 2}.
\eeqa
We need the following facts about $\Delta$.
\begin{lemma} \label{lpsquarefcn}
For any $N> 0$, we have
\beqa
||\Delta f ||_p \lesssim  || f \beta _{N,T} || _p
\eeqa
for $p\in (1, \infty) $, where
\beqa
\beta _N  (x_1, x_2) = {1\over {1+|x_1|^N +|x_2|^N } },
\eeqa
and
$\beta  _{N, T}$ is an $L^{\infty}$-normalized version of $\beta _N $ adapted to $\topp (T)$.  The implicit constant depends on $N$ but not on $T$.
\end{lemma}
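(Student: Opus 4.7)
The plan is to treat $\Delta$ as an $\ell^2$-valued Calderón-Zygmund operator on $\bbr^2$, bootstrapping from an $L^2$ bound supplied by the refined Bessel inequality of Section \ref{besselsection}. I begin with the $L^2$ case. Since $\int \one_s/|s|\,dx = 1$, one has
\[
\|\Delta f\|_2^2 = \sum_{s \in T} |\langle f, \varphi_s\rangle|^2.
\]
Because $T$ is a $1$-tree, the right halves $\omega_{s,1}$ are essentially pairwise disjoint across scales, while the $\varphi_s$ are Schwartz-localized to parallelograms $s \subseteq C\,\topp(T)$. These are precisely the hypotheses of the refined Bessel inequality, which then yields $\sum_{s \in T} |\langle f, \varphi_s\rangle|^2 \lesssim \|f\,\beta_{N,T}\|_2^2$ for any large $N$, giving the lemma at $p=2$.

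For general $p\in(1,\infty)$, I would view $\Delta$ as the $\ell^2(T)$-valued operator with kernel
\[
K(x,y) = \bigl( \varphi_s(y)\,\one_s(x)/\sqrt{|s|}\bigr)_{s\in T}.
\]
After a fixed linear change of variables flattening $\topp(T)$ to a standard rectangle, the Schwartz decay of each $\varphi_s$ adapted to $s$ supplies the $\ell^2$-valued size and H\"ormander estimates for $K$, and vector-valued Calder\'on--Zygmund theory combined with the $L^2$ bound gives the unweighted $L^p$ estimate $\|\Delta f\|_p \lesssim \|f\|_p$ for $p \in (1,2]$. For $p > 2$, I would appeal to the BMO-type estimates of Section \ref{bmosection}, interpolating against the $L^2$ bound. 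To upgrade to the weighted right-hand side, split $f = f_0 + \sum_{k \geq 1} f_k$ where $f_k$ is supported at (anisotropic) distance $\sim 2^k$ from $\topp(T)$: apply the unweighted $L^p$ bound to the local piece $f_0$, and use Schwartz decay of $\varphi_s$ to extract a factor $2^{-Nk}$ from each pairing $\langle f_k, \varphi_s\rangle$, which sums against the polynomial decay of $\beta_{N,T}$ to produce the claimed estimate.

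The main obstacle is the weighted $L^2$ bound in the first step, which requires the refined Bessel inequality for a family of Schwartz wave packets whose supporting parallelograms have slightly varying orientations across scales; classical one-dimensional orthogonality arguments do not apply directly, and careful cross-scale bookkeeping is needed. Once the refined Bessel input is in place, the vector-valued Calder\'on--Zygmund machinery and the BMO/weighted extensions follow standard templates with only cosmetic modifications to accommodate the anisotropic scaling inherent to the parallelogram tiles.
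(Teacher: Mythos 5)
Your $L^2$ step is fine: it is exactly Lemma \ref{bessel} applied to $f\one_{\Omega_k}$ and summed over $k$. The gap is in the passage to general $p$. The claim that, after a single linear change of variables flattening $\topp(T)$, the Schwartz decay of the $\varphi_s$ supplies the $\ell^2$-valued size and H\"ormander estimates for $K(x,y)=(\varphi_s(y)\one_s(x)/\sqrt{|s|})_{s\in T}$ is not true uniformly in the tree. No single affine map renormalizes the whole tile family: a tile at scale $j$ has dimensions $w\times 2^{-j}L$ with the width $w$ \emph{fixed}, so after flattening $\topp(T)$ to the unit square the tiles become $2^{-j}\times 1$ slabs and the wave packets of \emph{every} scale occupy the full vertical extent of $C\topp(T)$. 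Take $x,y$ with the same horizontal coordinate but vertical separation comparable to the width: then every scale contributes comparably, and $\|K(x,y)\|_{\ell^2}^2\gtrsim \sum_j |s_j|^{-2}$ diverges as the tree acquires more scales, whereas the isotropic Calder\'on--Zygmund size bound would require $\|K(x,y)\|_{\ell^2}\lesssim |x-y|^{-2}$. (In one dimension the analogous sum converges because the spatial support shrinks in the only available direction; here the vertical direction never shrinks.) This is precisely the difficulty the paper flags --- ``the collection $\{\varphi_s\}_{s\in T}$ has no orthogonality in the vertical direction'' --- so the standard vector-valued Calder\'on--Zygmund theorem cannot be invoked, and your closing step (``extract $2^{-Nk}$ from each pairing'') hides the same problem: per-tile decay does not control the sum over the unboundedly many scales sitting directly above or below the support of $f_k$.

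What the paper does instead is to decompose $f=\sum_k f\one_{\Omega_k}$ and prove $\|\Delta_k f\|_p\lesssim 2^{-Nk}\|\one_{\Omega_k}f\|_p$ by interpolating the Bessel $L^2$ bound against a weak $(1,1)$ bound for each $\Delta_k$, and the weak $(1,1)$ bound is obtained from a Calder\'on--Zygmund decomposition built by hand for this degenerate geometry: stopping rectangles of the full width $w$ (with enlarged heights $R_k$ in the selection), bad functions $b_R$ with mean zero only \emph{along horizontal line segments}, exceptional sets which are the vertical strips $R'=\pi_1(R)\times\pi_2(C\topp(T))$ running over the entire tree (so that tiles shorter than $R$ lie horizontally far from the complement and can be handled by horizontal decay alone), and the replacement of $\one_s$ by $\one_{\tilde s}$ with $\tilde s=\pi_1(s)\times C\pi_2(\topp(T))$; the bound on the good part also uses the vertical frequency localization of $f$. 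In other words, the cancellation exploited is one-dimensional, in the horizontal variable, fibered over the vertical one --- this structure has to be built into the argument and is not delivered by isotropic vector-valued theory. Two smaller points: the interpolation in the paper yields $p\in(1,2]$, which is all that is used downstream (Lemma \ref{intersection} needs $p=1+\epsilon$); and your plan for $p>2$ via Section \ref{bmosection} does not work as stated, since Lemma \ref{bmosquarefcn} is a reverse H\"older statement for a fixed $f$ under a uniform-size hypothesis on the tree, not an $L^p$ mapping bound for $\Delta$.
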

We prove Lemma \ref{lpsquarefcn} in Section \ref{sqfcnsection}.
Of course proving $||\Delta f || _2  \lesssim ||f|| _2 $ is straightforward; indeed, it is an easy special case of Lemma \ref{bessel}.  The work is in inserting the smooth cutoff $\beta _N$, which is the point of Lemma \ref{bessel}, and moving below $L^2$.
Second,
\begin{lemma} \label{bmosquarefcn}
\beqa
||\Delta f ||_2 \lesssim  {1\over { |\topp (T) | ^{1\over 2 } } } \int_{C\topp (T)} \Delta f,
\eeqa
provided that $T$ satisfies the following uniform size estimate:
\beqa
\sup _{\text{$1$-trees} \hspace{.2cm}  T' \subseteq T }
	\left( {1\over {|\topp (T')| } } \sum _{s\in T'}
	|\langle f , \varphi _s \rangle | ^2  \right) ^{1\over 2}
\lesssim \left( {1\over {|\topp (T)| } } \sum _{s\in T}
	|\langle f , \varphi _s \rangle | ^2  \right) ^{1\over 2} .
\eeqa
\end{lemma}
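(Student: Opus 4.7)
The plan is to rewrite the conclusion in a more tractable form and then establish a John--Nirenberg-type distributional estimate by running a Calder\'on--Zygmund stopping-time on $\Delta f$. Since the $L^2$ identity gives
\[
\|\Delta f\|_2^2 = \sum_{s\in T}|\langle f,\varphi_s\rangle|^2 = \sigma_T^2 |\topp(T)|,
\]
where $\sigma_T = \bigl(\tfrac{1}{|\topp(T)|}\sum_{s\in T}|\langle f,\varphi_s\rangle|^2\bigr)^{1/2}$, the lemma is equivalent to $\int_{C\topp(T)}\Delta f \gtrsim \sigma_T|\topp(T)|$, i.e.\ a positive fraction of $C\topp(T)$ carries $\Delta f \gtrsim \sigma_T$.

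The core claim I would prove is the exponential distributional estimate
\[
\bigl|\{x\in C\topp(T):\Delta f(x)>\lambda\sigma_T\}\bigr| \lesssim 2^{-c\lambda}|\topp(T)| \qquad (\lambda\geq A),
\]
for absolute constants $A,c>0$. Granted this bound, a layer-cake computation, together with the $L^2$ identity and Chebyshev, forces $|\{\Delta f \gtrsim \sigma_T\}\cap C\topp(T)|\gtrsim|\topp(T)|$, which immediately delivers $\int \Delta f\gtrsim\sigma_T|\topp(T)|$. For the distributional estimate itself, I would iterate a Calder\'on--Zygmund decomposition: at level $\lambda$ select a maximal pairwise-incomparable family of tile-parallelograms $\{R_j\}\subseteq C\topp(T)$ (each carrying the orientation of $\topp(T)$, with $\omega_{R_j}\supseteq \omega_{\topp(T)}$) satisfying $\tfrac{1}{|R_j|}\int_{R_j}\Delta f > \lambda\sigma_T$. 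For each $R_j$ the collection $T_j=\{s\in T:s\leq R_j\}$ is a $1$-sub-tree, so the uniform size hypothesis yields $\sum_{s\in T_j}|\langle f,\varphi_s\rangle|^2\lesssim \sigma_T^2 |R_j|$, and hence $\|\Delta_{T_j}f\|_2^2\lesssim \sigma_T^2 |R_j|$. A Chebyshev estimate inside each $R_j$ then shows that the upgraded level set $\{\Delta f>2\lambda\sigma_T\}$ occupies only a small fraction of $\bigcup R_j$, producing the required geometric decay step, which I would iterate across dyadic levels.

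The main obstacle will be reconciling the stopping-time with the two-dimensional tile geometry. In contrast to the one-dimensional dyadic setting, the stopping parallelograms are neither axis-aligned nor translation-dyadic, and the relation $s\leq R_j$ allows $s$ to extend into the enlargement $CR_j$ rather than to sit cleanly inside $R_j$. To separate the ``local'' contribution of $T_j$ from the ``background'' pieces of $T\setminus\bigcup_j T_j$, I would perform a linear change of variables adapted to the single slope of $\topp(T)$ (legitimate because $T$ is a $1$-tree with a fixed top), which restores a genuinely dyadic grid on the base. The leftover Schwartz tails of the wave packets $\varphi_s$ outside $s$, which would otherwise contaminate the stopping argument, are absorbed using the refined Bessel inequality from Section~\ref{besselsection}, exactly as tails are handled in the tree and size lemmas. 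The $L^p$ control provided by Lemma~\ref{lpsquarefcn} serves as a safety net for the very first level $\lambda = A$ to launch the iteration.
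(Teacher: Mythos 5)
Your overall strategy --- a John--Nirenberg-type distributional estimate for the square function at scale $\sigma_T$, combined with $\|\Delta f\|_2^2=\sigma_T^2|\topp(T)|$ to force $\int\Delta f\gtrsim\sigma_T|\topp(T)|$ --- is the same as the paper's. The genuine gap is inside your stopping-time step. After selecting a maximal parallelogram $R_j$ with $\tfrac{1}{|R_j|}\int_{R_j}\Delta f>\lambda\sigma_T$ and splitting $\Delta f$ on $R_j$ into the part coming from $T_j=\{s\in T:s\leq R_j\}$ and the rest, your Chebyshev bound (via the uniform size hypothesis) only controls the $T_j$-part. To conclude that $\{\Delta f>2\lambda\sigma_T\}$ fills a small fraction of $R_j$ you also need, on most of $R_j$, a pointwise bound of order $\lambda\sigma_T$ on the contribution of the tiles \emph{longer} than $R_j$. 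In the one-dimensional dyadic model this is free: that contribution is constant on the stopping interval, and maximality (failure of the stopping condition at the parent) upgrades an average bound to a pointwise one. Here it fails: a tile $s$ much longer than $R_j$ has height only $w$ (and a slightly different slope), so $\one_s/|s|$ restricted to $R_j$, or to the column over $\pi_1(R_j)$, is a thin band rather than a constant, and average information from the previous generation gives no pointwise control; moreover your stopping rectangles are not nested in a grid, so the parent-average argument has nothing to hang on. The shear adapted to the slope of $\topp(T)$ does not cure this --- it makes the top axis-parallel (the paper assumes this anyway) but leaves the tiles $w$-thin. The paper's fix is different in substance: it replaces $\one_s$ by $\one_{\tilde s}$ with $\tilde s=\pi_1(s)\times C\pi_2(\topp(T))$, i.e.\ proves the claim for the pointwise larger $\tilde\Delta\geq\Delta$, so the long-tile part depends on the base variable alone; it then runs the stopping time on dyadic intervals of the base (after the reduction that tiles are labelled by dyadic intervals) and stops on the partial sums of squares $a_{I,K}=\sum_{I\subseteq J\subseteq K}|\langle f,\varphi_{s(J)}\rangle|^2\,\one_{\tilde s(J)}/|s(J)|$, where maximality together with the single-scale bound $|\langle f,\varphi_{s(I)}\rangle|^2/|s(I)|\leq\sigma^2$ (this is where the uniform-size hypothesis enters) yields the pointwise increment bound $a_{I,K}<(m+2)\sigma^2$ per generation, while Chebyshev makes each generation lose a fixed fraction of measure; off the $n$-th generation one gets $(\tilde\Delta f)^2\lesssim n\sigma^2$.

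Two secondary points. Your iteration doubles the level ($\lambda\to2\lambda$) at each measure-shrinking step; even granting the step, this yields at best polynomial-type (or $2^{-c(\log\lambda)^2}$) decay, not the claimed $2^{-c\lambda}$ --- harmless for your layer-cake deduction, where high-order polynomial decay and a large starting level $A$ suffice, but the exponential claim should not be advertised as the output of this scheme. Also, the appeal to the refined Bessel inequality to ``absorb Schwartz tails'' is off target in this lemma: $\Delta f$ involves the sharp cutoffs $\one_s$ and fixed coefficients $\langle f,\varphi_s\rangle$, no spatial decomposition of $f$ is performed, and the difficulty to be absorbed is the geometric constancy issue described above, not wave-packet tails.
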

The condition in the last lemma is the one mentioned in the remark at the beginning of this section.
We prove Lemma \ref{bmosquarefcn} in Section \ref{bmosection}.
The point of these lemmas is that $||\Delta f ||_2$ is closely related to $\size(T)$.  Indeed, 
\beqa
||\Delta f ||_2 ^2 = \sum _{s\in T} |\langle f , \varphi _s \rangle | ^2 .
\eeqa
On the other hand, we want information about $|F\cap \topp (T)|$ (or possibly $|F \cap M\topp (T)|$ for a dilate $M\topp (T)$ of $\topp (T)$, which is actually what we will obtain below), which is much more closely related to $||\Delta f||_p$ for $p$ close to $1$, as we see below.  
Combining these two lemmas and H\"{o}lder's inequality gives us
\beqa
 \left(  {1\over { |\topp (T) | } }\sum_{s\in T} |\langle f , \varphi _s \rangle| ^2 \right)^{1\over 2}
&=& {1\over { |\topp (T) | ^{1\over 2} } } ||\Delta f ||_2 \\
&\lesssim &  {1\over { |\topp (T) |  } } \int _ {C\topp (T)} \Delta f \\
&\lesssim & \left(  {1\over { |\topp (T) |  } }  \int (\Delta f) ^{1+\epsilon} \right)  ^{1\over {1+\epsilon} } \\
&\lesssim & \left(  {1\over { |\topp (T) |  } }
	\int (f \beta _{N, T} ) ^{1+\epsilon} \right)  ^{1\over {1+\epsilon} } . \\
\eeqa
Applying this with $f= \one _F$ and a tree $T$ such that 
$ \left(  {1\over { |\topp (T) | } }\sum_{s\in T} |\langle f , \varphi _s \rangle| ^2 \right)^{1\over 2} \sim \sigma $ 
gives us for any $N$,
\beqa
\sigma ^{1+\epsilon}|\topp(T)| &\lesssim & \int \one _F (\beta _{N, T} )^{1+\epsilon}  \\
& \lesssim  & |\sigma ^{-\epsilon} \topp (T) \cap F| + \sigma ^{(N-2) \epsilon} | \sigma ^{-\epsilon} \topp(T) |
\eeqa

This proves Lemma \ref{intersection} since $N$ can be chosen arbitrarily large with respect to $\epsilon$.


\section{Proof of Tree Lemma} \label{treesection}
In this section we present a proof of Lemma \ref{treelemma}.  Recall that we have a fixed tree $T$ in mind.  For notational convenience we assume that the slope of the long side of $\topp (T)$ is zero.  We write $\pi _1 (E), \pi _2 (E)$ to denote the vertical, horizontal (respectively) projections of a set $E$. 
Of course the width of every tile in $T$ is a fixed number $w$.  Let $\calj_1$ be a partition of $\bbr$ (the horizontal axis) into dyadic intervals such that $3J \times \bbr$ does not contain any tile $s\in T$, and such that $J$ is maximal with respect to this property.  Now let $\calj _2$ be a partition of $\bbr$ (the vertical axis) into intervals of width ${1\over 3} |\pi _2 (\topp (T))| $.  Let
\beqa
 \calp = \bigcup _{J_1 \in \calj_1 } \bigcup _{J_2 \in \calj _2} J_1 \times J_2.
\eeqa
  This is a partition of $\bbr ^2$.  The parallelograms $P\in
 \calp$ are the smallest relevant parallelograms for this tree.  The parallelograms $P\in \calp $ with $\pi _1 (P)$ far away from $\topp (T)$ are defined so as to still be able to take advantage of the density estimate for tiles in $T$.  Now for each $P\in\calp$ we split the operator $L$ into two pieces, one corresponding to tiles with larger $x$-projection than $P$, the other to tiles with smaller $x$-projection than $P$:  let
\beqa
T_P ^+ &=& \{ s\in T \colon |\pi _1 (s)| > |\pi _1 (P) | \}    \\
T_P ^- &=& \{ s\in T \colon |\pi _1 (s)| \leq |\pi _1 (P) | \}   \\
L_P ^+ &=& \sum _{s\in T_P ^+ } \langle f , \varphi _s \rangle \phi _s \one _E \\
L_P ^- &=& \sum _{s\in T_P ^- } \langle f , \varphi _s \rangle \phi _s \one _E .
\eeqa

Note that for appropriate $\epsilon _s$ with $|\epsilon_s| =1 $, we have
\beqan \label{star} \nonumber
\sum _{s\in T} |\langle f, \varphi _s \rangle \langle \phi _s \one _E \rangle | &=&
\sum _{s\in T} \epsilon _s  \langle f, \varphi _s \rangle \langle \phi _s \one _E \rangle \\ \nonumber
 &=& \int \sum _{s\in T} \epsilon _s  \langle f, \varphi _s \rangle  \phi _s \one _E \\ \nonumber
&=& \sum _{P \in \calp }  \int_P \sum _{s\in T} \epsilon _s  \langle f, \varphi _s \rangle  \phi _s \one _E \\
&=&    \sum _{P \in \calp }  \int_P L_P ^-  + \sum _{P \in \calp }  \int_P L_P ^+.
\eeqan
The main term will come from parallelograms $P \in \calp$ close to $\topp (T)$; estimates on parallelograms $P$ away from $\topp (T)$ will come with a decay factor.  To make things more precise, define for $k\geq 1$, 
\beqa
\calp _0 &=& \{ P \in \calp \colon {{ \dist (\pi _2 (P), \pi _2 (\topp (T)) )} \over {|\pi _2 (\topp (T)) | }}  \leq  1 \} \\
\calp _k &=& \{ P \in \calp \colon {{ \dist (\pi _2 (P), \pi _2 (\topp (T)) )} \over {|\pi _2 (\topp (T)) | }} \in (2^{k-1} ,2^{k} ] |  \}.
\eeqa
We focus first on the first term in \eqref{star}.  To control it we need only spatial decay in both the horizontal and vertical directions.

\subsection{Small tiles}
For notational convenience, we further consider for $l\geq 1$, 
\beqa
\calp _{k, 0} &=& \{ P \in \calp _k \colon {{ \dist (\pi _1(P), \pi _1( \topp (T))  )} \over { |\pi _1( \topp (T))| }} \leq 1 \}, \\
\calp _{k, l} &=& \{ P \in \calp _k \colon {{ \dist (\pi _1(P), \pi _1( \topp (T))  )} \over { |\pi _1( \topp (T))| }}
		\in (2^{l-1}, 2^l ] \}.
\eeqa
We divide the sum in the definition of $L_P ^{-}$ into pieces according to how large the tiles are.  Specifically, let 
\beqa
T_j = \{s \in  T_P ^- \colon |s| = 2^{-j} |\topp (T)| \}.
\eeqa
The reason for this is that since the tiles $s\in T_P ^-$ are shorter than $P$, their frequency intervals can be much larger than that of $P$, 
meaning we lose control on $|P \cap \supp (L_P^{-})|$.  We use the extra decay from Schwartz tails to compensate for this.  The upper bound of $\size (T) \leq \sigma$ implies that for individual tiles $s\in T$ we have
$|\langle f , \varphi _s \rangle | \leq \sigma |s| ^{1\over 2}$.  Hence
\beqa
|\sum _{s\in T_j} \langle f, \varphi _s \rangle \phi _s \one _E |
& \lesssim & \sum _{s\in T_j} \sigma \chi _s ^{(\infty)} \\ 
& \lesssim & \sigma  2^{-Nk} \sum _{m \geq 2^{j+l} } m^{-N}  \\
& \lesssim & \sigma 2^{-Nk} 2^{ - Nj \over 2} 2^{-Nl \over 2}.
\eeqa
But note that since $\dense (s) \lesssim \delta$, we have
\beqa
\delta & \gtrsim & \int _{E_s } \chi _s ^{(1)} \\
& \geq & 2^{-100 (k+j+l)} {|P \cap \supp (\sum _{s\in T_j} \langle f, \varphi _s \rangle \phi _s \one _E )| \over |P|} , 
\eeqa
This last estimate follows from considering the distance between $s$ and $P$ relative to the length of $s$.
Hence for any $P \in \calp _{k,l}$, we have
\beqa
\int _P |L_P ^{-}| &\leq& \sum _{j \geq 0} \int _P |\sum _{s\in T_j} \langle f, \varphi _s \rangle \phi _s \one _E )| \\
& \lesssim & \sigma \sum _{j \geq 0} 2^{-Nk} 2^{ - Nj \over 2} 2^{-Nl \over 2} 
	|P \cap \supp (\sum _{s\in T_j} \langle f, \varphi _s \rangle \phi _s \one _E | \\
& \lesssim & \delta |P| \sigma 2^{-10(l+k)} 
\eeqa


Summing over $k,l$ and $P$ gives us
\beqa
 \sum _{P \in \calp }  \int_P | L_P ^- |
& \lesssim & \sum _{l \geq 0} \sum _{k\geq 0} \sum _{P\in \calp _{k,l} }  \int_P | L_P ^- |  \\
& \lesssim & \sum _{l \geq 0} \sum _{k\geq 0} \sum _{P\in \calp _{k,l} } \sigma \delta |P| 2^{-10k} 2^{-10l} \\
& \lesssim & \sigma \delta |\topp (T)|,
\eeqa
with the primary contribution coming from $P$ near $\topp (T)$ as usual.


\subsection{Large tiles}
We start by remarking that sorting with respect to horizontal distance from $T$ (i.e., using the index $l$, as in the previous subsection) is unnecessary in this subsection.  For if $P\in \calp _{k,l}$ with $l\geq C$, then $T_P ^+$ is empty, because 
$|\Pi _1 (P)| > | \Pi _1 (\topp (T)) |$.  This fact will appear several times in what follows.
Next, we show that the term under consideration in this section has small support.  Precisely:

\begin{claim} \label{support}
For $P \in \calp _k$, $L_P ^+ \one _E $ is supported on a set of size $\lesssim \delta |P| 2^{100k}$.
\end{claim}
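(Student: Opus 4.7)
The key ingredient is the lemma from Section~\ref{consttile} stating that $\phi_s(x) = 0$ unless $v(x) \in \omega_{s,2}$, which applied termwise gives
\[
\supp(L_P^+ \one_E) \cap P \subseteq P \cap E \cap \bigcup_{s \in T_P^+} \{x : v(x) \in \omega_{s,2}\}.
\]
My first move is to replace this union by the $v$-preimage of a single dyadic interval. For each $s \in T_P^+$, the tree condition gives $\omega_{\topp(T)} \subseteq \omega_s$, while the defining inequality $|\pi_1(s)| > |\pi_1(P)|$ forces $|\omega_s| < w/|\pi_1(P)|$. Hence the $\omega_s$'s form a nested chain of dyadic intervals containing $\omega_{\topp(T)}$, all fitting inside a single dyadic interval $\tilde\omega$ of length $\sim w/|\pi_1(P)|$. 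Consequently, the support on $P$ lies in $P \cap E_{\tilde\omega}$, where $E_{\tilde\omega} := \{x \in E : v(x) \in \tilde\omega\}$.

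Next I would select a tile $s^* \in T_P^+$ of minimal length so that $\omega_{s^*} = \tilde\omega$, $|s^*| \sim |P|$, and such that $s^*$ horizontally overlaps $P$. In the large-tiles regime this is feasible because, as observed at the beginning of the subsection, $T_P^+$ is nonempty only when $P$ is horizontally within a bounded multiple of $|\pi_1(\topp(T))|$ of $\topp(T)$, so $s^*$ and $P$ can share a horizontal footprint inside $C\topp(T)$. Since $s^* \in T \subseteq \cals_{\delta}$, we have $\dense(s^*) \lesssim \delta$, i.e.\
\[
\delta \;\gtrsim\; \int_{E_{s^*}}\chi_{s^*}^{(1)} \;\geq\; \int_{P \cap E_{s^*}}\chi_{s^*}^{(1)}.
\]
For $x \in P$, the vertical distance from $x$ to $s^* \subseteq C\topp(T)$ is $\sim 2^k w$, i.e.\ $\sim 2^k$ tile-widths; together with the (bounded) horizontal displacement this yields the Schwartz-tail lower bound $\chi_{s^*}^{(1)}(x) \gtrsim 2^{-100k}/|s^*|$ throughout $P$.

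Combining these,
\[
|P \cap E_{\tilde\omega}| \;=\; |P \cap E_{s^*}| \;\lesssim\; \frac{|s^*|}{2^{-100k}} \int_{P \cap E_{s^*}}\chi_{s^*}^{(1)} \;\lesssim\; \delta\, 2^{100k}|s^*| \;\sim\; \delta |P|\, 2^{100k},
\]
which is the claim. The main subtle point is the selection of $s^*$: if $T$ happens not to contain a tile at the ideal dyadic scale or horizontal position for $P$, one takes the best available $s^* \in T_P^+$ and absorbs any resulting loss from weaker horizontal Schwartz decay into the already generous factor $2^{100k}$, which the subsequent summation over $k$ can easily accommodate.
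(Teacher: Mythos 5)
There is a genuine gap at the selection of $s^*$. Your argument needs a tile $s^*\in T_P^+$ with $|s^*|\sim |P|$ lying horizontally over (or near) $P$, but nothing guarantees such a tile exists: membership in $T_P^+$ only imposes the \emph{lower} bound $|\pi_1(s)|>|\pi_1(P)|$, and the maximality of $J_1=\pi_1(P)$ in the construction of $\calp$ only produces a nearby tile $s_0\in T$ with $|\pi_1(s_0)|\lesssim|\pi_1(P)|$ --- a tile which may well lie in $T_P^-$. It can happen that every tile of $T_P^+$ is much longer than $P$ (e.g.\ only $\topp(T)$), and/or sits at horizontal distance many multiples of $|\pi_1(P)|$ away. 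Taking ``the best available $s^*\in T_P^+$'' then breaks the quantitative conclusion: since $\dense(s^*)\lesssim\delta$ is an average over the region of $s^*$, the computation only yields $|P\cap E_{s^*}|\lesssim \delta\,2^{100k}|s^*|$, and the loss factor $|s^*|/|P|$ (together with any horizontal Schwartz-decay loss, measured in units of the \emph{long} tile's length) is completely unrelated to $k$; it cannot be ``absorbed into $2^{100k}$'' because the claim must hold for each fixed $k$ (in particular $k=0$), where for small $\delta$ your bound can exceed $|P|$ and is therefore vacuous while the claim demands the genuine gain $\delta|P|$.

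The paper closes exactly this hole by not using a tile of $T_P^+$ as the carrier of the density information. It takes the tile $s_0\in T$ with $s_0\subseteq C2^kP$ furnished by the maximality of $J_1$ (its length is $\lesssim|\pi_1(P)|$, but it may be a ``small'' tile), and then builds an auxiliary parallelogram $R\in\calu$ with the \emph{same dimensions as $P$}, located spatially over the tree at $P$'s horizontal position, with $\omega_R\subseteq\omega_{s_0}$ and $\omega_R\supseteq\bigcup_{s\in T_P^+}\omega_{s,2}$. Then $s_0\leq R$, so the hypothesis $\udense(s_0)\lesssim\delta$ (this is precisely where $\udense$, rather than $\dense$, is needed) gives $\dense(R)\lesssim\delta$; since $R$ has the dimensions of $P$ and lies $\sim 2^k$ widths away, the lower bound $\chi_R^{(1)}\gtrsim 2^{-100k}/|P|$ on $P$ produces the support bound with the correct factor $|P|$ (the paper phrases this as a contradiction). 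Your Schwartz-tail computation is the right quantitative core, but to repair the proof you must transfer the density to a parallelogram of $P$'s size via the order relation and $\udense$, rather than use the density of a (possibly much longer) tree tile directly.
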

The factor $2^{100k}$ arises from the tail in the definition of $\dense$ and the fact that $P$ is away from $\topp (T)$.  Fortunately, the decay in the functions $\varphi _s$ for $s\in T$ is even greater when $P$ is away from $\topp (T)$.
\begin{proof}
It is convenient to proceed by contradiction.  Assume $L_P ^+ \one _E $ has much larger support than
$\delta |P| 2^{100k}$.  By the construction of $P$, we know that there is some $s\in T$ such that $s\subseteq C2^k P$.  But this implies there is $R$ of the same dimensions as $P$, but located spatially over $T$, with $\omega _{R} \subseteq \omega _s$ and such that $\dense (R) \geq 100 \delta $, say.  Since this implies $s\leq R$, we have contradicted the assumption that $\udense (s) \leq \delta $.
\end{proof}

We now turn our attention to the second term in \eqref{star}.  Recall the definitions of $1$-trees and $2$-trees.  Clearly for every $s\in T$, either $\omega_{s,1} \cap \omega _{\topp (T)} = \emptyset$ or $\omega_{s,2} \cap \omega _{\topp (T)} = \emptyset$, so our tree $T$ can be partitioned as $T= T_1 \cup T_2$, where $T_j$ is a $j$-tree.  Let
\beqa
(T_P ^ + ) _j = T_P ^+ \cap T_j
\eeqa
for $j=1,2$.  Of course $(T_P ^ + ) _j$ is still a $j$-tree.  We treat the two cases separately.


\subsubsection{ The $2$-tree case }
This case is a bit easier to handle because of the location of the support of the function $\phi _s$.  More to the point:  Since $T_2$ is a $2$-tree, if there exists $x$ such that $\phi _s (x) \phi _t (x) \neq 0$ for $s, t \in T_2$, then $|s|= |t|$.  This follows from the fact that $\phi _s (x)=0$ unless $v(x) \in \omega _{s,2}$, together with the fact that $\omega _{s,1} \supseteq \omega _{\topp (T)}$, and similarly for $t$.  (This was mentioned near the definition of $\phi _s$ in Section \ref{modelreduction}.)  Further, we know that for any tile $s \in T$, we have $|\langle f , \varphi _s \rangle | \leq \sigma |s|^{1\over 2}$ by the $\size$ estimate for $T$.   Combining these observations with Claim \ref{support} and the rapid decay of $\phi _s$ in the vertical direction gives us for
$P\in \calp _k$ that
\beqa
\int _P \sum _{s\in (T_P ^ + )_2 } \langle f, \varphi _s \rangle \phi _s \one _E \lesssim \sigma \delta 2^{-10k} |P|,
\eeqa
since the integrand is uniformly bounded by $\sigma 2^{-200k}$.  As mentioned earlier, if $|\pi _1 (s)| \geq | \pi _1 (P)|$, then $\pi _1 (P) \subseteq C \pi _1 (\topp (T))$.  Hence
\beqa
\sum _k \sum _{P \in \calp _k} \int _P \sum _{s\in (T_P ^ + )_2 } \langle f, \varphi _s \rangle \phi _s \one _E
\lesssim \delta \sigma | \topp (T) |.
\eeqa
This completes the estimate for $T_2$.


\subsubsection{ The $1$-tree case } \label{treeproof}

In this case we appeal to orthogonality in the form of the Bessel inequality in Lemma \ref{bessel}.  For parallelograms $P\in\calp$ whose vertical component is large, we need the decay factor from Lemma \ref{bessel}.  We first introduce some extra functions associated to the tiles:  let
\beqa
\alpha _s (x) = \int \psi _s (t) \varphi _s (x_1 - t, x_2  ) dt.
\eeqa
The difference between $\alpha _s$ and $\phi _s$ is that the vector field $v$ makes no explicit appearance in the definition of $\alpha _s$; rather, the integral is taken over a horizontal line for every $x$.  In $\phi _s$, however, the integral is taken over an 
\it almost \rm horizontal line, where the precise definition of \it almost \rm depends on the length of $s$.  (The line is horizontal because we assumed that the slope of the long side of $\topp (T)$ is zero.  In the general case it is parallel to $\topp (T)$.)  
 We have the obvious equality
\beqa
\int _P \sum _{s\in (T_P ^ + )_1 }\epsilon _s  \langle f, \varphi _s \rangle \phi _s \one _E
 &=&
\int _P \sum _{s\in (T_P ^ + )_1 } \epsilon _s \langle f, \varphi _s \rangle \alpha _s \one _E \\
&+&
\int _P \sum _{s\in (T_P ^ + )_1 } \epsilon _s \langle f, \varphi _s \rangle (\phi _s - \alpha _s ) \one _E .
\eeqa
This decomposition allows us to reduce our problem to proving the following two claims:
\begin{claim} \label{average}
For each $P\in \calp$,
\beqa
\int _P \sum _{s\in (T_P ^ + )_1 } \epsilon _s \langle f, \varphi _s \rangle \alpha _s \one _E
\lesssim \delta \sum _{j \geq 0} 2^{-Nj} {1\over |2^j P|} \int _{2^j P} | \sum _{s\in T_1 } \langle f, \varphi _s \rangle \alpha _s |.
\eeqa
\end{claim}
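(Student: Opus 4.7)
My plan is to exploit the fact that on $P$ the function $g_P := \sum_{s\in(T_P^+)_1} \epsilon_s \langle f,\varphi_s\rangle \alpha_s$ is nearly constant in the horizontal direction, and to combine this with the density estimate for $E$ coming from $\udense(\topp(T))\le\delta$. The wave packet $\alpha_s(x_1,x_2)=\int \psi_s(t)\varphi_s(x_1-t,x_2)\,dt$ is (modulo Schwartz tails) a horizontal wave packet at scale $|\pi_1(s)|$ tensored with a vertical bump of width $w$. For every $s\in (T_P^+)_1$ we have $|\pi_1(s)|>|\pi_1(P)|$ by definition, so $\alpha_s$ varies negligibly as $x_1$ ranges over $\pi_1(P)$. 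This near-constancy is the main structural feature that will let us factor the integral against $\one_E$ into a horizontal average of $|E\cap P|$ times a vertical integral of $g_P$, and then replace $g_P$ by the full sum $g_{T_1}:=\sum_{s\in T_1}\langle f,\varphi_s\rangle\alpha_s$ with only a Schwartz-tail cost.

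Concretely, I would proceed as follows. First, exploiting $|\pi_1(s)|>|\pi_1(P)|$ and the Schwartz decay of $\varphi_s$, write
\[
g_P(x_1,x_2)=g_P(c_1,x_2)+\mathrm{Err}(x_1,x_2),\qquad (x_1,x_2)\in P,
\]
where $c_1$ is the horizontal center of $P$ and the error is controlled by $\sum_{j\ge 0}2^{-Nj}\,|2^jP|^{-1}\int_{2^jP}|g_P|$. Second, tiles $s\in T_1\setminus(T_P^+)_1$ satisfy $|\pi_1(s)|\le|\pi_1(P)|=|J_1|$, and by maximality of $J_1$ in the construction of $\calj_1$ their horizontal projections are disjoint from $3J_1$; therefore they contribute to $P$ only via Schwartz tails of $\alpha_s$, which again fit into the template $\sum_j 2^{-Nj}|2^jP|^{-1}\int_{2^jP}$. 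This lets me replace $g_P$ on $P$ by $g_{T_1}$ at the cost of such tail terms. Third, I invoke the density hypothesis: since $\udense(\topp(T))\le\delta$, there is $R\ge\topp(T)$ with $\dense(R)\lesssim\delta$, and since $P\subseteq C\,\topp(T)\subseteq C'R$, a Chebyshev-type argument applied to $\int_{E_R}\chi_R^{(1)}\lesssim\delta$ gives $|E\cap P|\lesssim\delta|P|$ (plus Schwartz tails at scales $2^jP$). Combining the three steps, I use horizontal constancy to pass the integration against $\one_E$ through to $|E\cap P|$, producing the required $\delta$, and then absorb the vertical integral of $g_P$ into $|2^jP|^{-1}\int_{2^jP}|g_{T_1}|$.

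The main obstacle is reconciling the density estimate with the fact that $\alpha_s$ (unlike $\phi_s$) carries no intrinsic restriction to $u^{-1}(\omega_{s,2})$, whereas the density condition naturally controls only $E_R=E\cap u^{-1}(\omega_R)$ rather than all of $E$. I expect this to be handled by observing that for any $s\in T_1$ we have $\omega_R\subseteq\omega_{\topp(T)}\subseteq\omega_{s,2}$, so $E_R$ is contained in the set where all $\phi_s$ (and hence effectively all the $\alpha_s$ terms we care about) live, and the contribution from $E\setminus u^{-1}(\omega_R)$ to the integral at scale $P$ can be absorbed into the same Schwartz-tail machinery — but making this rigorous, together with the careful bookkeeping of the two layers of Schwartz tails (from horizontal smoothness of $\alpha_s$ within $(T_P^+)_1$ and from spatial separation of the tiles $T_1\setminus(T_P^+)_1$), is the delicate step.
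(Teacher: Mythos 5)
There are two genuine gaps here, and together they miss the mechanism the paper actually uses. First, the crucial step of converting the $P$-dependent partial sum $g_P=\sum_{s\in(T_P^+)_1}\epsilon_s\langle f,\varphi_s\rangle\alpha_s$ into averages of the \emph{full} sum $g_{T_1}=\sum_{s\in T_1}\langle f,\varphi_s\rangle\alpha_s$ cannot be done by ``near-constancy plus Schwartz tails.'' A partial sum of a tree is not dominated, pointwise or in averages, by the full sum: there is cancellation, so neither your replacement of $g_P$ by $g_{T_1}$ ``at Schwartz-tail cost'' (the discarded tiles with $|\pi_1(s)|\le|\pi_1(P)|$ contribute terms of size comparable to $\sigma$, not terms controlled by averages of $|g_{T_1}|$) nor your final ``absorb the vertical integral of $g_P$ into $|2^jP|^{-1}\int_{2^jP}|g_{T_1}|$'' is justified; the latter is exactly the point at issue. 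The paper's proof supplies the missing idea: in a $1$-tree (with the top's slope normalized to zero) the horizontal frequency supports of the $\alpha_s$ are separated according to scale, so the sum restricted to scales in an interval $[m(x),M(x)]$ is \emph{exactly} a difference of convolutions of $g_{T_1}$ with $L^1$-normalized Schwartz kernels $k_{m(x)}$, $k_{M(x)}$ of horizontal width $\geq|\pi_1(P)|$ and vertical width $w$; this Fourier-projection identity is what legitimately produces the bound by $\sum_{j\ge0}2^{-Nj}|2^jP|^{-1}\int_{2^jP}|g_{T_1}|$. Your ``near-constancy'' observation (which in any case requires demodulating the carrier frequency and only gives variation comparable to, not much smaller than, the size) is not a substitute for this.

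Second, your source of the factor $\delta$ is incorrect. The hypothesis $\udense(\topp(T))\le\delta$ controls only $E\cap u^{-1}(\omega_{R})$-type sets, and $|E\cap P|\lesssim\delta|P|$ is simply false in general; $E$ may fill $P$ while $u$ points away from $\omega_R$. You flag this obstacle, but your proposed fix goes the wrong way: the functions $\alpha_s$ have no decay in the direction variable $u(x)$, so the contribution of $E\setminus u^{-1}(\omega_R)$ cannot be ``absorbed into Schwartz-tail machinery.'' What actually saves the claim is that the directional cutoff is inherited from $\phi_s$ when one writes $\phi_s=\alpha_s+(\phi_s-\alpha_s)$: the quantity being estimated is really $\sum_s\epsilon_s\langle f,\varphi_s\rangle\alpha_s\one_{\omega_{s,2}}(u(x))\one_E$ (this restriction also furnishes the upper scale cutoff $M(x)$ used above), and the measure of $\{x\in P: x\in E,\ u(x)\in\bigcup_s\omega_{s,2}\}$ is $\lesssim\delta|P|$ (up to $2^{Ck}$ losses for $P\in\calp_k$) by the $\udense$ hypothesis, exactly as in Claim \ref{support}. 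So the $\delta$ comes from the size of the support of the integrand, not from $|E\cap P|$; without importing the directional restriction from $\phi_s$, the claim as you set it up is not provable.
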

\begin{claim} \label{error}
For $P \in \calp _k$,
\beqa
 \sum _{s\in (T_P ^ + )_1 } \epsilon _s \langle f, \varphi _s \rangle (\phi _s - \alpha _s ) \one _E \lesssim 2^{-200k} \sigma.
\eeqa
\end{claim}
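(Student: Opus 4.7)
The strategy is to first prove a pointwise upper bound on $|\phi_s(x) - \alpha_s(x)|$ that carries a factor of $2^{-Nk}$ for arbitrarily large $N$, and then to sum over $s \in (T_P^+)_1$ using the size bound. I begin by writing
\begin{equation*}
\phi_s(x) - \alpha_s(x) = \int \check{\psi}_s(t)\bigl[\varphi_s(x_1 - t, x_2 - tu(x)) - \varphi_s(x_1 - t, x_2)\bigr]\, dt
\end{equation*}
and using the fundamental theorem of calculus to express the bracket as $-tu(x)\int_0^1 \partial_2\varphi_s(x_1 - t, x_2 - stu(x))\, ds$. Because $|u(x)|\le 1$ and $\check{\psi}_s$ is concentrated at a time scale small compared with the vertical extent of $\topp(T)$, the vertical coordinate of the argument of $\partial_2\varphi_s$ stays within a fixed fraction of $|\pi_2(\topp(T))|$ of $x_2$.

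For $x \in P \in \calp_k$ the vertical distance from $x$ to $\topp(T)$ is comparable to $2^k|\pi_2(\topp(T))|$, and every tile $s \le \topp(T)$ lies inside $C\topp(T)$, so the vector $(x_1 - t, x_2 - stu(x))$ lies at vertical distance $\gtrsim 2^k|\pi_2(\topp(T))|$ from $s$. The Schwartz tail of $\varphi_s$ (inherited by $\partial_2\varphi_s$) in the direction transverse to the long axis of $s$ then yields, for any $N$,
\begin{equation*}
|\phi_s(x) - \alpha_s(x)| \lesssim 2^{-Nk}\, \Psi_s(x),
\end{equation*}
where $\Psi_s$ is a Schwartz envelope adapted to $s$ that absorbs the factors coming from $\|\partial_2\varphi_s\|_\infty$ and the $t$-moment $\int |t\,\check{\psi}_s(t)|\, dt$. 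The implicit constant depends on $N$ but not on $s$, $k$, or $T$.

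To finish, I combine this bound with the size estimate $|\langle f,\varphi_s\rangle|\lesssim \sigma|s|^{1/2}$ and organize the tiles of $(T_P^+)_1$ by dyadic scale via $|\omega_s| = 2^j|\omega_{\topp(T)}|$. At each fixed $x$ the horizontal Schwartz decay of $\Psi_s$ collapses the sum over tiles of a fixed scale $j$ to a uniformly bounded contribution, and choosing $N$ a fixed constant larger than $200$ makes the outer sum over $j\ge 0$ converge geometrically while leaving a factor of $2^{-200k}\sigma$. The main obstacle I anticipate is careful scale bookkeeping: tracking how the $L^2$-normalization of $\varphi_s$, the $L^1$-normalization of $\check{\psi}_s$, and the pointwise size of $\partial_2\varphi_s$ combine so that every scale $j\ge 0$ contributes a summable amount rather than producing a divergent geometric series.
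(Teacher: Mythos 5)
Your opening move (the fundamental theorem of calculus in the vertical variable applied to $\varphi_s(x_1-t,x_2-tu(x))-\varphi_s(x_1-t,x_2)$) is the same as the paper's, but the two mechanisms you then invoke are not the ones that close the argument, and one of them is false as stated. The assertion that ``$\check{\psi}_s$ is concentrated at a time scale small compared with the vertical extent of $\topp(T)$'' is wrong: $\check{\psi}_s$ lives at the time scale of the \emph{length} $|\pi_1(s)|$ of the tile, which is at least $|\pi_1(P)|$ and typically far larger than the width $w=|\pi_2(\topp(T))|$. With only the global bound $|u(x)|\le 1$, the vertical displacement $|tu(x)|$ can be as large as $|\pi_1(s)|\gg 2^k w$, so the intermediate point $(x_1-t,\,x_2-stu(x))$ can pass straight through the core of the tile, and your claimed pointwise bound $|\phi_s(x)-\alpha_s(x)|\lesssim 2^{-Nk}\Psi_s(x)$ fails. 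The missing input is the support property $\phi_s(x)=0$ unless $u(x)\in\omega_{s,2}$, which, after the reduction used at the start of the proof of Claim \ref{average} (restricting to tiles with $m(x)\le |\pi_1(s)|\le M(x)$), gives $|u(x)|\lesssim w/|\pi_1(s)|$ and hence keeps the vertical displacement $\lesssim w$ on the bulk of $\check{\psi}_s$; this is exactly how the paper legitimizes the derivative estimate.

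Second, even granting your pointwise bound, the scale summation does not close. Tracking the factors you say $\Psi_s$ ``absorbs'' ($\int|t\,\check{\psi}_s(t)|\,dt\sim|\pi_1(s)|$ and $\|\partial_2\varphi_s\|_\infty\sim |s|^{-1/2}w^{-1}$), each scale contributes roughly $\sigma\,|u(x)|\,|\pi_1(s)|/w$ times an envelope; the series in $j$ is geometric but dominated by the \emph{longest} tiles, leaving the factor $|u(x)|\,|\pi_1(\topp(T))|/w$, which is unbounded. The factor $2^{-Nk}$ is independent of $j$ and cannot repair this. The paper combines the same FTC bound with $|u(x)|\le w/|\pi_1(s)|$ (from $u(x)\in\omega_{s,2}$), so each scale carries the factor $|u(x)|\,|\pi_1(s)|/w\le 1$ and the sum over the admissible scales $|\pi_1(s)|\le w/|u(x)|$ sums to $O(1)$, giving the uniform bound $\sigma\,\chi^{(\infty)}_{\topp(T)}(x)$; the $2^{-200k}$ is then just the value of that envelope at $x\in P\in\calp_k$. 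So the gap is concrete: you never use the direction restriction $u(x)\in\omega_{s,2}$, and without it both your pointwise estimate and your bookkeeping break down (for instance at points with $|u(x)|\sim 1$ under long tiles).
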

Notice that $\supp \widehat{\alpha _s} \subseteq \supp \widehat{\varphi _s}$, since 
\beqa
\widehat{ \alpha _s} (\xi ) = \int \psi _s (t) e^{-2\pi i t \xi _1 } \widehat{ \varphi _s } (\xi ) dt.
\eeqa
This will allow us to prove orthogonality statements about the $\alpha _s$ later in the proof.  For example, 
   From this we can conclude that
\beqan \label{alphabessel}
|| \sum _{s\in T_1}  \epsilon _s \langle f, \varphi _s \rangle \alpha _s   || _2 ^2
\lesssim \sum _{s\in T_1} |\langle f, \varphi _s \rangle | ^2 ,
\eeqan
because the fact stated above about the Fourier support of the functions $\alpha _s$ allows us to prove this inequality in the same way we prove the Bessel inequality in Section \ref{besselsection}:  expand the square, and notice that $\langle \alpha _s , \alpha _t \rangle =0$ unless $|s| = |t|$.

Again we remark that if $T_P ^+$ is nonempty, then $\pi _1 (P) \subseteq C \pi _1 (\topp T)$.  Hence in the summation below we can ignore dependence on the parameter $l$ used in the last section.  Given these claims, together with Claim \ref{support}, we control the first term in \eqref{star} by
\beqa
 \sum _{P \in \calp }  \int_P L_P ^ +  &\lesssim &
\sum _{P \in \calp } \int _P  \epsilon _s \sum _{s\in (T_P ^ + )_1 } \langle f, \varphi _s \rangle \alpha _s \one _E \\
&+& \sum _{P \in \calp } \int _P  \epsilon _s \sum _{s\in (T_P ^ + )_1 } \langle f, \varphi _s \rangle (\phi _s - \alpha _s ) \one _E \\
&\lesssim &
\sum _k \sum _{P \in \calp _{k} } \delta  \int _P  
\sum _{j \geq 0} 2^{-Nj} {1\over |2^j P|} \int _{2^j P} | \sum _{s\in T_1 } \langle f, \varphi _s \rangle \alpha _s | \\
&+& \sum _k \sum _{P \in \calp _{k} }   2^{-200k}\sigma |P \cap \supp (L_P^+ )|.
\eeqa
Note that the second term in the last display is controlled by Claim \ref{support}.
For $P\in \calp _k$, it is convenient to split the function 
$ \sum _{s\in T_1 } \langle f, \varphi _s \rangle \alpha _s$ into two pieces, using the identity 
$\one _{\bbr ^2} = \one _{D_{k-5}} + \one _{(D_{k-5})^c}$, where 
\beqa
D_k= \{ (x,y) \colon |y| \lesssim 2^k |\pi _2 (\topp (T))| \}.
\eeqa
In other words, $D_k$ is horizontal strip of width $\sim 2^k |\pi _2 (\topp (T))|$.
(Obvious modifications can be made in the case $k\leq 5$.)
For the first piece-- the one closer to $\topp (T)$-- we can use the fact that the tile $P$ is far from $\topp (T)$ together with the decay in $j$ to obtain good control.  For the second piece-- the one away from $\topp (T)$ -- we can take advantage of the decay in the wave packets associated to tiles in $T$ in the form of the Bessel inequality in Lemma \ref{bessel}.
We focus first on the term close to $\topp (T)$:
\beqa
&\left. \right. & \sum _k  \sum _{P \in \calp _{k} } \delta  \int _P  
\sum _{j \geq 0} 2^{-Nj} {1\over |2^j P|} \int _{2^j P} 
	| \sum _{s\in T_1 } \langle f, \varphi _s \rangle \alpha _s  \one _{D_{k-5}} | \\
& = & \sum _k \sum _{P \in \calp _{k} } \delta  \int _P  
\sum _{j \geq k} 2^{-Nj} {1\over |2^j P|} \int _{2^j P} 
	| \sum _{s\in T_1 } \langle f, \varphi _s \rangle \alpha _s  \one _{D_{k-5}}| \\ 
& \lesssim &  \sum_k \sum_{P\in \calp _k} \delta 2^{-Nk} \int _P M ( | \sum _{s\in T_1 } \langle f, \varphi _s \rangle \alpha _s  \one _{D_{k-5}}|  ) \\
& = & \delta \int _{\cup _{l=0} ^C \cup _{P \in \calp _{k,l} } P } 
		2^{-Nk} M ( | \sum _{s\in T_1 } \langle f, \varphi _s \rangle \alpha _s  \one _{D_{k-5}}|  ) \\
& \lesssim & \delta 2^{-Nk} \left| \cup _{l=0} ^C \cup _{P \in \calp _{k,l} } P \right| ^{1\over 2}
		\left( \int | \sum _{s\in T_1 } \langle f, \varphi _s \rangle \alpha _s  \one _{D_{k-5}}|^ 2 \right) ^{1\over 2}.
\eeqa
This nearly finishes the proof for the first term, since we may estimate this $L^2$ norm by using orthogonality in the $x$-variable just as in the proof of Lemma \ref{bessel} below.  (Readers uncomfortable with this should look to the proof of Lemma \ref{bessel}.)  Specifically, we have
\beqa
\int | \sum _{s\in T_1 } \langle f, \varphi _s \rangle \alpha _s  \one _{D_{k-5}}|^ 2  
&=& \sum _{s\in T_1} \sum _{s' \in T_1} \langle f, \varphi _s \rangle \langle f, \varphi _{s'} \rangle \int _{D_k} \alpha _s \alpha _{s'} \\
& \lesssim & \sum _{s\in T_1} |\langle \varphi _s , f \rangle |^2 \sum _{s' \colon |s|=|s'|} \int |\alpha _s \alpha _{s'}| \\ 
& \lesssim & \sum _{s\in T_1} |\langle \varphi _s , f \rangle |^2 \\
& \lesssim & \sigma ^2 |\topp (T)|.
\eeqa
We have used symmetry and the $x$-orthogonality in the first inequality above.  This finishes the proof for the first term.  To control the second term (the one away from $\topp (T)$), we can appeal directly to a Bessel-type inequality.  Here we use such an inequality for the functions $\alpha _s$ rather than the functions $\varphi _s$, just as in the estimate above, but we also obtain significant decay in $k$ just as in Lemma \ref{bessel}.  The proof is identical to the proof of Lemma \ref{bessel}.  Hence
\beqa
&\left. \right. & \sum _k \sum _{l=0} ^ C \sum _{P \in \calp _{k,l} } \delta  \int _P  
\sum _{j \geq 0} 2^{-Nj} {1\over |2^j P|} \int _{2^j P} 
	| \sum _{s\in T_1 } \langle f, \varphi _s \rangle \alpha _s  \one _{(D_{k-5})^c} | \\
&\lesssim & \delta  \int _{\cup _{l=0} ^C \cup _{P \in \calp _{k,l} } P } 
	M (| \sum _{s\in T_1 } \langle f, \varphi _s \rangle \alpha _s  \one _{(D_{k-5})^c} |) \\
&\lesssim & \delta  \left| \cup _{l=0} ^C \cup _{P \in \calp _{k,l} } P \right| ^{1\over 2}
	\left( \int  | \sum _{s\in T_1 } \langle f, \varphi _s \rangle \alpha _s  \one _{(D_{k-5})^c} |^2  \right) ^{1\over 2} \\
&\lesssim & \delta  2^k |\topp (T)|^{1\over 2} (\sigma ^2 2^{-100k} |\topp (T)| ) ^{1\over 2} \\
&\lesssim & 2^{-10k} \delta  \sigma |\topp (T)|,	
\eeqa
which is what we want.

\begin{proof} [Proof of Claim \ref{average}]
Recall that we are considering a point $x\in P$ for some parallelogram $P$, and we consider the sum
\beqa
\sum _{s\in T_1 \colon |\pi _1 (s)| > |\pi _1 (P)| } \langle f, \varphi _s \rangle \phi _s (x).
\eeqa
The restriction in the summation already implies that for any $x$, there is $m(x)$ such that all tiles $s$ who make an appearance in the sum above satisfy $|\pi _1 (s)| \geq m(x)$.  Further, since we know that $u(x)\in \omega _{s,2}$, we also have $M(x)$ such that all tiles $s$ who make an appearance in the sum above satisfy $|\pi _1 (s)| \leq M(x)$.  Both of these claims are reversible, so 
\beqa
\{s\in T_1 \colon |\pi _1 (s)| > |\pi _1 (P)| \} = \{s\in T \colon m(x) \leq L(s) \leq M(x) \}.
\eeqa
Hence it is our goal to estimate
\beqa
\sum _{s\in T \colon m(x) \leq L(s) \leq M(x) } \langle f, \varphi _s \rangle  \alpha _s .
\eeqa
%
Denote by $k$ a Schwartz function such that 
$\supp \hat{k} \subseteq [-1-{1\over 100}, 1+ {1\over 100}]^2$, and such that $\hat{k} (\xi) = 1$ for $\xi \in [-1 , 1] ^2$. further denote by $k_r$ the function obtained by adapting $k$ to the rectangle $[{-1\over r}, {1\over r}] \times [{-1\over w}, {1 \over w}]$; i.e., let $k_r (x,y) = k({x \over r}, {y \over w})$.   With this definition, we know for any $N$ (which appears in the last line of the computation below)
\beqa
\sum _{s\in T_1 \colon m(x) \leq L(s) \leq M(x) } \langle f, \varphi _s \rangle   \alpha _s
&=& \sum _{s\in T_1 \colon m(x) \leq L(s)  } \langle f, \varphi _s \rangle   \alpha _s \\
&-& \sum _{s\in T_1 \colon L(s) > M(x) } \langle f, \varphi _s \rangle   \alpha _s \\
&=& (\sum _{s\in T_1  } \langle f, \varphi _s \rangle   \alpha _s   ) \ast k_{m(x)} \\
&-& (\sum _{s\in T_1  } \langle f, \varphi _s \rangle   \alpha _s   ) \ast k_{M(x)} \\
&\leq & \sum _{j \geq 0} 2^{-Nj} {1\over |2^j P|} \int _{2^j P} | \sum _{s\in T_1 } \langle f, \varphi _s \rangle \alpha _s |.
\eeqa

\end{proof}

\begin{proof} [Proof of Claim \ref{error}]
By the argument at the beginning of the proof of Claim \ref{average}, it suffices to estimate
\beqa
\sum _{s\in T \colon m(x) \leq |\pi _1(s)| \leq M(x) } \langle f, \varphi _s \rangle (\phi _s (x)- \alpha _s (x) )\one _{\omega_{s,2}} (u(x)).
\eeqa
To do this we first estimate $|\phi _s - \alpha _s|$.  By definition, we have
\beqa
|\phi _s (x)- \alpha _s (x) | &\leq &
\int |\psi _s (t)| | \varphi _s (x_1 -t, x_2 - t u(x) ) - \varphi (x_1 - t , x_2 ) | dt.
\eeqa
To compute the difference in the integrand, estimating the following quantity will be helpful:
\beqa
\star : = \sup _{z\in [0, t u(x)]} {\partial \over \partial x_2 } \varphi _s (x_1-t, x_2 - z) .
\eeqa
Fix an integer $j\geq 1$ and consider $|t| \sim 2^j |\pi _1 (s)|$.  If $(x_1, x_2) \not\in 2^{j+10} s$, then 
$\star \lesssim \chi _s ^{(2)} (x_1, x_2)$.  If $(x_1, x_2) \in 2^{j+10} s$, then $\star \lesssim 1$.  We also have that 
$\psi _s (t) \lesssim {1\over 2^{Nj} |s|}$ for any $N$.  Analogous facts hold when $j=0$ and $|t| \leq |\pi _1 (s)|$.  
Let $I_j = \{t \colon |t| \sim 2^j |\pi _1 (s)| \} $ for $j\geq 1$ and 
$I_0 = \{ t \colon |t| \leq |\pi _1 (s)| \}$. Combining these observations gives us for $(x_1, x_2) \not\in 2^{j+10} s$ that 
\beqa
|\phi _s (x)- \alpha _s (x) | &\lesssim & \sum _{j\geq 0} \int _{I_j } 
			{1\over 2^{Nj} |s|} 2^j |\pi _1 (s)| {|u (x)| \over w} \chi _s ^{(2)} (x_1, x_2)  dt \\ 
		&\lesssim & |\pi _1 (s)| {|u (x)| \over w} \chi _s ^{(2)} (x_1, x_2).
\eeqa
If $(x_1, x_2) \in 2^{j+10} s$, then we have $\star \lesssim 2^{100j} \chi _s ^{(2)}$, so 
\beqa
|\phi _s (x)- \alpha _s (x) | &\lesssim & \sum _{j\geq 0} \int _{I_j } 
			{1\over 2^{-Nj} |s|} 2^j |\pi _1 (s)| {|u (x)| \over w}  dt \\ 
		& \lesssim & |\pi _1 (s)| {|u (x)| \over w} \chi _s ^{(2)} (x_1, x_2).
\eeqa
Since $u(x) \in \omega _{s,2}$ for all $s \in T_1$, we know $u(x) \leq {w \over {|\pi _1 (s)|}} $.  Combining this with the fact that
$|\langle f, \varphi _s \rangle | \lesssim \sigma |s|^{1\over 2}$ and the estimate immediately above, we have
\beqa
|\sum _{ m(x) \leq |\pi _1 (s)| \leq M(x) } \langle f, \varphi _s \rangle ( \phi _s - \alpha _s )|
&\leq &
\sum _{ |\pi _1 (s)| \leq {{w} \over {u(x)}}   }
		\sigma |s|^{1\over 2}  |u(x)| {{|\pi _1(s)|} \over {w}} \chi _s ^{(2)} (x_1, x_2) \\
&\lesssim & \sigma \chi ^{(\infty) } _ {\topp (T)} (x_1, x_2) ,
\eeqa
which is what we claimed.

\end{proof}


\section{Proof of $\size$ estimate } \label{sizesection}
%

In this section we write $f=\one _F$; note that we do not use the fact that $f$ is a characteristic function.
As with the tree lemma, there are small modifications required from the one-dimensional situation to handle Schwartz tails in the vertical direction.  We use the Bessel inequality from Lemma \ref{bessel} to do this.
First we note that by assumption,
\beqa
\sigma ^2 \sum_{T\in \calt} |\topp (T)|  &\lesssim &
 \sum_{T\in \calt} \sum _{s\in T} |\langle f, \varphi _s \rangle | ^2
 \\
&=&  \int f \sum _T \sum _s \langle f, \varphi _s \rangle \varphi _s \\
&\leq & ||f|| _2  || \sum _T \sum _s \langle f, \varphi _s \rangle \varphi _s || _2 .
\eeqa
It is enough to prove
\beqa
|| \sum _T \sum _s \langle f, \varphi _s \rangle \varphi _s || _2
\leq \sigma \sqrt{ \sum_{T\in \calt} |\topp (T)|  } .
\eeqa
By expanding the square and using symmetry, we have
\beqa
|| \sum _{T\in \calt} \sum _{s\in T} \langle f, \varphi _s \rangle \varphi _s || _2  ^2 &=&
\sum_{T\in \calt} \sum _{T' \in \calt} \sum _{s\in T'}  \sum _{s'\in T'}
	\langle f, \varphi _s \rangle \langle f, \varphi _{s'} \rangle \langle \varphi _s , \varphi _{s'} \rangle \\
& \lesssim & \sum _{T\in \calt} \sum _{s\in T} \sum _{T' \in \calt} \sum _{s'\in T' \colon |s' | = |s| }
	|\langle f, \varphi _s \rangle \langle f, \varphi _{s'} \rangle \langle \varphi _s , \varphi _{s'} \rangle| \\
&+& \left| \sum _{T\in \calt} \sum _{s\in T} \sum _{T' \in \calt} \sum _{s'\in T' \colon |s' | < |s| }
	\langle f, \varphi _s \rangle \langle f, \varphi _{s'} \rangle \langle \varphi _s , \varphi _{s'} \rangle   \right| \\
&=& B+C.
\eeqa
Note that
\beqa
\{ s' \colon |s'| = |s| \text{ and } \omega _s \cap \omega _{s'} \neq \emptyset \}
\eeqa
partitions $\bbr ^2$, so
\beqa
\sum _{|s'| =|s| } |\langle \varphi _s , \varphi _{s'} \rangle | \sim 1.
\eeqa
Hence we can estimate the first term, using symmetry again, by
\beqa
B &\lesssim & \sum _{T\in \calt} \sum _{s\in T} \sum _{T' \in \calt} \sum _{s'\in T \colon |s' | = |s| }
	|\langle f, \varphi _s \rangle| ^2 |\langle \varphi _s , \varphi _{s'} \rangle| \\
&\lesssim &  \sum _{T\in \calt} \sum _{s\in T} |\langle f, \varphi _s \rangle| ^2 \\
&\sim & \sigma ^2 \sum_{T\in \calt} |\topp (T)|.
\eeqa
Now we look at the second term $C$.  By Cauchy-Schwarz, we have
\beqa
C &\leq & \sum _{T\in \calt} \left( \sum _{s\in T}|\langle f, \varphi _s \rangle| ^2   \right) ^{1\over 2}
	\left( \sum _{s\in T} \left| \sum _{T' \in \calt ' } \sum _{s'\in T' \colon |s' | < |s| }
		\langle \varphi _s , \varphi _{s'} \rangle \langle f, \varphi _{s'} \rangle \right|^2 \right)  ^{1\over 2} \\
  &\lesssim & \sum _{T\in \calt} \sigma  |\topp (T)| ^{1\over 2}  D(T)^{1\over 2}
\eeqa
where
\beqa
D(T)= \sum _{s\in T} \left| \sum _{T' \in \calt ' } \sum _{s'\in T' \colon |s' | < |s| }
		\langle \varphi _s , \varphi _{s'} \rangle \langle f, \varphi _{s'} \rangle \right|^2 .
\eeqa
It remains to analyze $D(T)$ for a tree $T\in \calt$.  We claim that the set of tiles over which the inner sum ranges is actually independent of $s$.  More specifically, define
\beqa
\cala = \{ s' \in \bigcup _{T' \neq T, T' \in \calt } T' \colon \omega _{s, 1} \cap \omega_{s', 1} \neq \emptyset
		\text{  and  } |s'| < |s| \text{  for some   } s\in \calt \}.
\eeqa
Then 
\begin{claim}
For each $s\in T$,
\beqa
\sum_{T' \in \calt} \sum _{s'\in T' \colon |s' | < |s| }
		\langle \varphi _s , \varphi _{s'} \rangle \langle f, \varphi _{s'} \rangle
= \sum _{s'\in \cala}
		\langle \varphi _s , \varphi _{s'} \rangle \langle f, \varphi _{s'} \rangle .
\eeqa
\end{claim}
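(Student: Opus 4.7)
My plan is to match the two sums term-by-term, showing that the symmetric difference of the two index sets consists entirely of pairs $(s,s')$ with $\langle\varphi_s,\varphi_{s'}\rangle = 0$. The whole argument will rest on the Fourier-support observation that $\widehat{\varphi_s}$ is (essentially) supported where $\xi/\eta\in\omega_{s,1}$, so that $\langle\varphi_s,\varphi_{s'}\rangle=0$ whenever $\omega_{s,1}\cap\omega_{s',1}=\emptyset$. I will reduce at the outset to the case that $T$ is a $1$-tree (decomposing $T=T_1\cup T_2$ into its maximal $1$- and $2$-subtrees and running the argument on each piece, with the roles of the right and left halves swapped in the $2$-tree case). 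Under this reduction, $\omega_{\topp(T)}\subseteq\omega_{s,2}$ for every $s\in T$.

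The central structural fact I would isolate first says: for any two tiles $s,s''\leq\topp(T)$ with $|\omega_{s''}|<|\omega_s|$, we have $\omega_{s''}\subseteq\omega_{s,2}$. This is a short dyadic-nesting argument --- both $\omega_s$ and $\omega_{s''}$ are dyadic intervals containing the common dyadic interval $\omega_{\topp(T)}$, so they are nested by size, forcing $\omega_{s''}\subseteq\omega_s$; and since $\omega_{s''}$ is at most half the size of $\omega_s$ while still meeting $\omega_{s,2}$ through $\omega_{\topp(T)}$, another dyadic nesting places $\omega_{s''}$ inside $\omega_{s,2}$.

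With this fact in hand I would dispatch the claim in three short steps. First, every LHS term with $T'=T$ (so $s'\in T$ and $|s'|<|s|$, equivalently $|\omega_s|<|\omega_{s'}|$) satisfies $\omega_s\subseteq\omega_{s',2}$ by the structural fact with the roles of $s$ and $s''$ swapped; hence $\omega_{s,1}\cap\omega_{s',1}=\emptyset$ and the term vanishes. Second, every RHS term with $|s'|\geq|s|$ is killed by the witness $s''\in T$ supplied by the definition of $\cala$: since $|\omega_{s''}|<|\omega_{s'}|\leq|\omega_s|$, the structural fact gives $\omega_{s'',1}\subseteq\omega_{s,2}$, and then $\omega_{s',1}$ --- which meets $\omega_{s'',1}$ and satisfies $|\omega_{s',1}|\leq|\omega_{s,2}|$ --- is forced into $\omega_{s,2}$ by another dyadic nesting, giving again $\langle\varphi_s,\varphi_{s'}\rangle=0$. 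Third, any surviving LHS term, with $T'\neq T$, $|s'|<|s|$ and $\omega_{s,1}\cap\omega_{s',1}\neq\emptyset$, lies in $\cala$ with witness $s$ itself.

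The main technical obstacle will be the triple dyadic-nesting chase in the second step, which requires keeping straight which of three nested dyadic intervals lies in which half of another, and in particular in which half of $\omega_s$. Everything else is essentially bookkeeping, and the reduction to the $1$-tree case is what makes the left/right bookkeeping manageable at all.
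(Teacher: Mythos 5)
Your dyadic chase is sound exactly where the paper's is: for a tile $s$ with $\omega_{\topp(T)}\subseteq\omega_{s,2}$, your structural fact plus the two nestings in your second step is a slightly more careful version of the paper's one-line assertion that in the remaining case $|s|\leq|s'|<|\tilde s|$ one has $\omega_{s,1}\cap\omega_{s',1}=\emptyset$, and your third step is the paper's ``clear from the definition of $\cala$'' direction. The genuine gap is the opening move. Splitting $T=T_1\cup T_2$ and ``swapping the roles of the right and left halves'' on $T_2$ is not a reduction for this statement: the vanishing criterion $\langle\varphi_s,\varphi_{s'}\rangle=0$ comes from the fact that $\widehat{\varphi_{s'}}$ sits over the right half $\omega_{s',1}$, and this is true of every wave packet no matter which subtree of $T$ (or which other tree) the tile lies in; the definition of $\cala$ and the inequality $|s'|<|s|$ are likewise stated in terms of right halves and lengths and do not transform under exchanging halves, and the tiles of the other trees are not decomposed at all. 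So there is no left-half analogue of your structural fact to run on $T_2$, and for $s$ of $2$-tree type (i.e.\ $\omega_{\topp(T)}\subseteq\omega_{s,1}$) the fact itself fails: a shorter tile of $T$ containing $\omega_{\topp(T)}$ may sit inside $\omega_{s,1}$. Concretely, your Step 2 breaks for such $s$: if $s'$ belongs to another tree, $\omega_{s'}=\omega_s$, and some $\tilde s\in T$ has $\omega_{\topp(T)}\subseteq\omega_{\tilde s}\subsetneq\omega_{s,1}$, then $s'\in\cala$ with witness $\tilde s$, the term is absent from the left-hand sum (its length is not strictly smaller than that of $s$), yet $\langle\varphi_s,\varphi_{s'}\rangle$ has no reason to vanish since the two wave packets have identical frequency support; more generally, once $\omega_{\topp(T)}\subseteq\omega_{s,1}$, both $\omega_{s,1}$ and $\omega_{s',1}$ contain $\omega_{\topp(T)}$ and so cannot be disjoint. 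Your Step 1 has the same defect when the shorter same-tree tile $s'$ lies in $T_2$ and $\omega_s$ sits inside $\omega_{s',1}$ close to its centre.

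For comparison: the paper does not attempt your half-swapping reduction; it runs the right-half argument for every $s\in T$, and its two key assertions (that the left-hand index set is contained in $\cala$, which silently discards the $T'=T$ terms your Step 1 tries to treat, and that $|s|\leq|s'|<|\tilde s|$ forces $\omega_{s,1}\cap\omega_{s',1}=\emptyset$) are themselves correct precisely under the $1$-tree geometry $\omega_{\topp(T)}\subseteq\omega_{s,2}$ that you reduced to, and fail for the $2$-tree-type configurations described above. So you have identified the right mechanism and reproduced the argument where it works, but the $2$-tree-type tiles and the same-tree pairs must be handled head on (for instance by stating and using the identity only for $s$ in the $1$-tree part and disposing of the remaining contributions by a separate estimate), not by appeal to a left/right symmetry that the wave packets do not possess.
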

\begin{proof}
It is clear from the definition of $\cala$ that the summation on the left is over a set of tiles that is contained in $\cala$.  So suppose $s' \in \cala$; by definition of $\cala$, this gives us $\tilde{s} \in \calt$ such that $|s'| < |\tilde{s}|$ and such that 
$ \omega _{\tilde{s}, 1} \cap \omega_{s', 1} \neq \emptyset$.  This last condition guarantees that $\omega_{s', 1} \supseteq \omega _T$.  If $|s| \geq |\tilde{s}|$, then of course $|s| > |s'|$ and $ \omega _{s, 1} \cap \omega_{s', 1} \neq \emptyset$, so that in fact the tile $s'$ appears in the summation on the left hand side of the claim.  If $|s| < |\tilde{s}|$ and $|s| > |s'|$ then we are done as before.  So assume $|s| \leq |s'| < |\tilde{s}|$.  In this case $\omega _{s, 1} \cap \omega _{s', 1} = \emptyset$, which implies that 
$\langle \varphi _s , \varphi _{s'} \rangle =0$, finishing the proof of the claim.
\end{proof}
Now for a collection of tiles $\calc$, define
\beqa
F(\calc) = \sum _{t \in \calc} \langle f, \varphi _{t} \rangle \varphi _{t}.
\eeqa
With this notation, we have
\beqa
D(T) = \sum _{s\in T} |\langle \varphi _s, F(\cala)\rangle  |^2 .
\eeqa
Before we proceed, we mention a key disjointness property of tiles in $\cala$.  

\begin{claim} \label{technicalsizeclaim}
Tiles in $\cala$ are pairwise disjoint.
\end{claim}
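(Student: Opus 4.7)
The plan is to argue by contradiction using the greedy selection procedure from Lemma~\ref{sizeiteration}. Suppose $s', s'' \in \cala$ are distinct with $s' \cap s'' \neq \emptyset$, and WLOG $|s'| \leq |s''|$. First I would unpack the definition of $\cala$: there exist $s, \tilde s \in T$ such that $|s'| < |s|$ with $\omega_{s,1} \cap \omega_{s',1} \neq \emptyset$, and $|s''| < |\tilde s|$ with $\omega_{\tilde s, 1} \cap \omega_{s'',1} \neq \emptyset$. Since the frequency intervals are dyadic and the larger contains the smaller whenever they meet, this upgrades to $\omega_s \subseteq \omega_{s'}$, $\omega_{s,1} \subseteq \omega_{s',1}$, and the analogous inclusions for $\tilde s, s''$.

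Next I would locate the tops. Let $T', T'' \in \calt \setminus \{T\}$ be the (unique) trees containing $s'$ and $s''$, respectively. Since every tree built by the selection procedure is a $1$-tree, $\omega_{\topp(T')} \subseteq \omega_{s',2}$ sits in the left half of $\omega_{s'}$, and likewise $\omega_{\topp(T'')} \subseteq \omega_{s'',2}$. On the other hand, $\omega_{s,1} \subseteq \omega_{s',1}$ and $\omega_{\tilde s,1} \subseteq \omega_{s'',1}$ force $\omega_{\topp(T)}$ to lie strictly to the clockwise side of both $\omega_{\topp(T')}$ and $\omega_{\topp(T'')}$ (using that $T$ itself is a $1$-tree so that $\omega_{\topp(T)} \subseteq \omega_{s,2} \cap \omega_{\tilde s, 2}$). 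This fixes the clockwise ordering of the three tops that will be invoked at the end.

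The main step: since $s'$ and $s''$ share the common width $w$ and $|s'| \leq |s''|$, physical intersection forces $s' \subseteq C s''$ (with the $C$ from the definition of ``$\leq$''). I would then argue on the frequency side that, using the inclusions $\omega_s \subseteq \omega_{s'}$, $\omega_{\tilde s} \subseteq \omega_{s''}$ together with the $1$-tree positioning of $\omega_{\topp(T'')}$ inside $\omega_{s'',2}$, one obtains $\omega_{\topp(T'')} \subseteq \omega_{s'}$. Combined with the spatial inclusion this yields $s' \leq \topp(T'')$. By the symmetric argument (or directly since $s''$ is the larger tile) one gets a corresponding statement relating $s''$ to $\topp(T')$. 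Now I would compare the clockwise positions of $c(\omega_{\topp(T')})$ and $c(\omega_{\topp(T'')})$: whichever is more clockwise was produced first by the greedy loop in Lemma~\ref{sizeiteration}, and the maximality of the tree built at that step would have absorbed the other tile --- contradicting the hypothesis that $s'$ and $s''$ belong to distinct trees $T' \neq T''$ (or, in the degenerate $T' = T''$ case, that they are distinct at all, since then they would already be comparable in a common tree and physical intersection with $|s'| \leq |s''|$ would force $s' \subseteq Cs''$ with matching frequency nesting, yielding the same absorption contradiction applied to the construction of $T'$).

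The main obstacle will be the bookkeeping on the frequency side: verifying precisely that $\omega_{\topp(T'')} \subseteq \omega_{s'}$, which is a combinatorial case analysis based on whether $|\omega_s|=|\omega_{s'}|/2$ or $|\omega_s|$ is strictly smaller, and on the symmetric data for $\tilde s, s''$. This is exactly the point at which the ``most clockwise'' convention in the greedy construction is essential, as flagged in the remark attached to the lemma --- without it the two tops $\topp(T')$ and $\topp(T'')$ could not be totally ordered in a way that singles out which should have absorbed the other's tile.
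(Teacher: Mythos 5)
Your outline follows the paper's strategy (nest the frequency intervals, then absorb one tile into the maximal tree that was selected earlier via the greedy loop of Lemma~\ref{sizeiteration}), but as written it has two genuine gaps. First, the trees produced by the selection algorithm are \emph{not} $1$-trees: the algorithm uses a $1$-tree only to locate a top, and then removes and stores the \emph{maximal} tree $\tilde T$ with that top (this maximality is exactly what the absorption step needs). So your assertion that $\omega_{\topp(T')}\subseteq\omega_{s',2}$, and the ensuing ``clockwise ordering of the three tops,'' rests on a false premise; all one knows is $\omega_{\topp(T')}\subseteq\omega_{s'}$ and $\omega_{\topp(T'')}\subseteq\omega_{s''}$.

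Second, and more seriously, the argument is not symmetric in the two tiles, though you treat it as if it were. The inclusion you flag as the main obstacle is in fact the easy one: both $\omega_{s'}$ and $\omega_{s''}$ contain $\omega_{\topp(T)}$ and $|\omega_{s''}|\leq|\omega_{s'}|$, so dyadic nesting gives $\omega_{\topp(T'')}\subseteq\omega_{s''}\subseteq\omega_{s'}$ immediately, whence $s'\leq\topp(T'')$ (up to the constant $C$, using $s'\cap s''\neq\emptyset$). But the ``symmetric statement relating $s''$ to $\topp(T')$'' is false in general: one only knows $\omega_{\topp(T')}\subseteq\omega_{s'}$, and $\omega_{\topp(T')}$ need not even meet the smaller interval $\omega_{s''}$ (indeed $\topp(T')$ may be spatially shorter than $s''$), so neither the frequency nor the spatial containment required for $s''\leq\topp(T')$ need hold. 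Consequently, in the branch of your case analysis where $c(\omega_{\topp(T')})$ is the more clockwise center, i.e.\ $T'$ was selected first, you have no absorption available and no contradiction. The paper's proof is asymmetric precisely here: from the nesting $\omega_{s''}\subseteq\omega_{s'}$ it deduces (this is where the ``most clockwise'' convention flagged in the remark after Lemma~\ref{sizeiteration} enters) that the tree containing the spatially \emph{longer} tile $s''$ was necessarily selected first, and then it performs only the one valid absorption, of $s'$ into that earlier maximal tree. Relatedly, your treatment of the degenerate case $T'=T''$ does not stand as written: two comparable, intersecting tiles inside a single (maximal) tree is not by itself a contradiction; the paper disposes of this case by the separate observation that the two tiles cannot lie in the same $1$-tree.
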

\begin{proof}
Suppose $t, t' \in \cala$.  Then there are $s, s' \in \calt$ such that $\omega _{t, 2} \supseteq \omega _s \supseteq \omega _{\topp(T)}$ and such that $\omega _{t', 2} \supseteq \omega _{s'} \supseteq \omega _{\topp (T')} $.  Hence $\omega _{t, 2} \cap \omega _{t', 2} \neq \emptyset$, we may assume without loss of generality that $\omega _{t, 2} \subseteq \omega _{t', 2}$, i.e., that $|t'| \leq |t|$.  This means the tree $T^*$ containing $t$ was selected before the tree containing $t'$.  Finally, note that $t$ and $t'$ cannot belong to the same $1$-tree, since 
$\omega _{t, 2} \subseteq \omega _{t', 2}$.  If $t\cap t' \neq \emptyset$, then in fact $t' \subseteq V (\topp (T^*))$, and hence $t'$ was included in the maximal tree $\widetilde{T^*}$ containing the $1$-tree $T^*$; see the selection algorithm in Section \ref{organizationsection} for construction of this tree $\widetilde{T^*}$.  Hence the tiles in $\cala$ are pairwise disjoint.  
\end{proof}
We now introduce some more notation to sort the tiles in $\cala $ according to how far they are from
$\topp (T)$.  For $k>1$, let $R_k = 2^k \topp (T)$.  Let $R_0 = \topp (T)$.  Then let
\beqa
\cala _k = \{ s' \in \cala \colon s' \subseteq R_k \text{  but  } s' \not\subseteq R_{k-1} \}.
\eeqa
Now by Minkowski,
\beqa
\left(\sum _{s\in T} |\langle \varphi _s, F(\cala) \rangle|^2  \right)^{1\over 2}
\leq \sum _k \left(\sum _{s\in T} |\langle \varphi _s, F(\cala _k) \rangle | ^2 \right) ^{1\over 2}.
\eeqa
It remains to show
\beqan \label{sizedecay}
\sum _{s\in T} |\langle \varphi _s, F(\cala _k) \rangle | ^2 \lesssim  2^{-10k} \sigma |\topp (T)|.
\eeqan
We will use the spatial localization of the tiles $s\in T$ to $\topp (T)$ to obtain the desired decay in $k$.
We have
\beqa
\sum _{s\in T} |\langle \varphi _s, F(\cala _k) \rangle | ^2
& \lesssim & \sum _{s\in T} |\langle \varphi _s,\one _{R_{k-3} } F(\cala _k) \rangle | ^2
	  +  \sum _{s\in T} |\langle \varphi _s, \one _{R_{k-3}^c } F(\cala _k) \rangle | ^2 \\
&=&  I _k + II _k .
\eeqa
First we estimate $I_k$.  For $x\in R_{k-3}$ and $s\in \cala _k$, we have 
\beqa
\varphi _s (x) \one_{R_{k-3}} (x) \lesssim 2^{-10k} {1\over {\sqrt{|s|}} } \chi _s ^{(\infty)} (x).  
\eeqa
We now estimate $||\one _{R_{k-3}} F(\cala _k)||_2$ by duality.  We make one small observation as a preliminary:  
\begin{claim}
If $M$ is the strong maximal operator, then 
\beqa
\int \chi _s ^{(\infty)} (x) g(x)dx \lesssim \int _s Mg(x) dx.
\eeqa
\end{claim}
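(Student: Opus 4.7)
The plan is to decompose $\chi_s^{(\infty)}$ dyadically into annular regions around $s$ and use the defining property of the strong maximal operator on each piece.

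First I would exploit the decay of $\chi$: since $\chi(x) = 1/(1+|x|^{100})$ and $\chi_s^{(\infty)}$ is its $L^\infty$-normalized adaptation to $s$, we have $\chi_s^{(\infty)} \lesssim 1$ on $s$ and $\chi_s^{(\infty)}(x) \lesssim 2^{-100k}$ for $x \in 2^{k+1}s \setminus 2^k s$ with $k \geq 0$. Writing $s = 2^0 s$, this gives
\beqa
\int \chi_s^{(\infty)}(x) g(x)\,dx \lesssim \sum_{k\geq 0} 2^{-100k} \int_{2^{k+1} s} g(x)\,dx.
\eeqa

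The key step is to trade each average of $g$ over $2^{k+1} s$ for the integral of $Mg$ over $s$. Here $M$ should be understood as the maximal operator over parallelograms of the orientation of $s$, which dominates averages over any dilate of $s$. For any $x_0 \in s$, since $x_0 \in 2^{k+1} s$, we have
\beqa
\frac{1}{|2^{k+1} s|} \int_{2^{k+1} s} g(x)\,dx \leq Mg(x_0).
\eeqa
Averaging this over $x_0 \in s$ yields
\beqa
\int_{2^{k+1} s} g(x)\,dx \leq |2^{k+1} s| \cdot \frac{1}{|s|} \int_s Mg(x_0)\,dx_0 = 2^{2(k+1)} \int_s Mg(x)\,dx,
\eeqa
where we used that $|2^{k+1} s|/|s| = 2^{2(k+1)}$ in two dimensions.

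Combining the two displays, the sum over $k$ becomes $\sum_{k\geq 0} 2^{-100k} \cdot 2^{2(k+1)} \lesssim 1$, yielding the claim. There is no serious obstacle here: the decay exponent $100$ in the definition of $\chi$ is much larger than the dimensional factor $2$, so the geometric series trivially converges. The only point requiring a bit of care is verifying that the maximal operator $M$ in the statement is indeed one that dominates averages over dilates of $s$ with its orientation, which is what is needed in its application immediately following this claim in Section \ref{sizesection}.
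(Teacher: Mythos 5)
Your proof is correct and follows essentially the same route as the paper: a dyadic annular decomposition of $\chi_s^{(\infty)}$, domination of each average over $2^k s$ by the value (or infimum) of $Mg$ on $s$, and summation of the geometric series using the rapid decay of $\chi$. Your closing caveat about $M$ needing to dominate averages over dilates with the orientation of $s$ matches the paper's own remark that the tiles are essentially pointed in the direction of $T$, so the strong maximal operator applies.
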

We remark that each $s\in \cala$ is essentially pointed in the direction of $T$, so the strong maximal operator is appropriate here.
\begin{proof}
\beqa
\int \chi _s ^{(\infty)} (x) g(x)dx & \lesssim & |s| \sum _{k\geq 0} 2^{-3k} {1\over {|2^k s|}} \int _{|2^k s|} |g| \\
&\lesssim & |s| \inf _{x\in s} M g(x) \\
&\lesssim &  \int _s Mg(x) dx.
\eeqa
\end{proof}
Consider a function $g \in L^2$, and remember that 
$ | \langle f , \varphi _s \rangle | \lesssim \sigma \sqrt{|s|}$.  Then using the claim above about disjointness of tiles $s\in \cala _k$, we have
\beqa
\int F(\cala _k) g \one_{R_{k-3}} &=&
\int \sum _{s \in \cala _k} \langle f , \varphi _s \rangle \varphi _s (x) \one_{R_{k-3}} (x) g \\
&\lesssim & \int \sum _{s \in \cala _k} 2^{-10k} \sigma \chi _s ^{(\infty)} (x) g \\
&\lesssim & 2^{-10k} \sigma \sum _{s \in \cala _k} \int _s M g \\
& \leq & 2^{-10k} \sigma  \int_{\bigcup _{s \in \cala _k} s} M g \\
& \leq & 2^{-10k} \sigma \left| R_k \right| ^{1\over 2} ||g||_2 \\
& \leq & 2^{-10k} \sigma  (2^{2k} |\topp (T)|) ^{1\over 2}  ||g||_2,
\eeqa
which implies that 
\beqa
I_k \lesssim || \one _{R_{k-3}} F( \cala _k) || _2 ^2 \lesssim \sigma ^2 2^{-4k} |\topp (T)|.
\eeqa
This proves \eqref{sizedecay} for $I_k$.

To estimate $II_k$, we need only estimate $|| F( \cala _k) || _2$ and apply Lemma \ref{bessel}.  We do this just as above:  let $g$ be such that $||g|| _2 = 1$.  Then 
\beqa
 \int F (\cala _k) g &\leq& \int | \sum _{s \in \cala _k} \langle f , \varphi _s \rangle \varphi _s g |\\
&\lesssim & \int |\sum _{s \in \cala _k} \sigma \chi _s ^{(\infty)} g | \\
& \lesssim & \sigma \int _{\cup _{s\in \cala _k} s } Mg \\
& \lesssim & \sigma |R_k | ^{1\over 2} .
\eeqa
So 
\beqa
||F(\cala_k) || _2 ^2 \lesssim \sigma ^2 |\cup \cala _k | \lesssim \sigma ^2 2^{2k} |\topp (T)|.
\eeqa
Hence by Lemma \ref{bessel},
\beqa
II_k \lesssim 2^{-10k} ||F(\cala_k) || _2 ^2 \lesssim \sigma ^2 2^{-8k} |\topp (T)|.
\eeqa
Summing in $k$ proves $D(T) \lesssim \sigma ^2 |\topp (T)|$, which finishes the proof.


\section{Localized Bessel inequality} \label{besselsection}

In this section we prove a Bessel inequality for $1$-trees with functions supported away from the top of the tree.  Specifically:
\begin{lemma} \label{bessel}
Let $T$ be a $1$-tree.  For $k\geq 1$, let $R_k = 2^k \topp (T)$.
For $k\geq 1$, let $\Omega _k = R_{k} \setminus R_{k-1}$.  Define $\Omega _0 = \topp (T)$.
Then for any $N>0$,
\beqa
\sum_{s\in T} |\langle f \one _{\Omega _k} , \varphi _s \rangle | ^2
	\lesssim 2^{-Nk} ||f \one _{\Omega _k}||_2 ^2.
\eeqa
\end{lemma}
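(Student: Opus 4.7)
The plan is to prove this localized Bessel inequality by duality combined with a Schur-test argument. By duality, the stated bound is equivalent to
$$
\Bigl\| \sum_{s\in T} a_s \varphi_s \Bigr\|_{L^2(\Omega_k)}^2 \lesssim 2^{-Nk} \sum_{s\in T} |a_s|^2 \qquad\text{for all } (a_s)\in\ell^2(T).
$$
Expanding the square yields $\langle M a, a\rangle$ with $M_{s,s'}=\int_{\Omega_k}\varphi_s\overline{\varphi_{s'}}$, so by Schur's lemma applied to the symmetric matrix $M$ it suffices to establish the row-sum estimate
$$
\sup_{s\in T}\ \sum_{s'\in T} \Bigl|\int_{\Omega_k}\varphi_s\,\overline{\varphi_{s'}}\Bigr| \lesssim 2^{-Nk}.
$$

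To prove the row-sum bound, I would exploit the structure of a $1$-tree by decomposing $T=\bigsqcup_{j\ge 0}T_j$ with $T_j=\{s\in T:|s|=2^{-j}|\topp(T)|\}$. A key feature of a $1$-tree is that tiles in $T_j$ share a common frequency interval $\omega^{(j)}$, and if $s\in T_{j_0}$ and $s'\in T_j$ with $j\ne j_0$ one checks (say with $|s|>|s'|$) that $\omega_s\subseteq\omega_{s',2}$, so the right halves $\omega_{s,1}$ and $\omega_{s',1}$ supporting $\hat\varphi_s$ and $\hat\varphi_{s'}$ are disjoint. In particular $\int_{\bbr^2}\varphi_s\overline{\varphi_{s'}}=0$ across distinct scales. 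Fix $s\in T_{j_0}$ and split the row sum according to the scale of $s'$.

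For same-scale pairs $s'\in T_{j_0}$, I would use the pointwise bound $|\varphi_s(x)|\lesssim|s|^{-1/2}\chi_s^{(\infty)}(x)$, the uniform decay $\chi_s^{(\infty)}(x)\lesssim 2^{-Nk}$ on $\Omega_k$ (since $s\subseteq C\topp(T)$ while $\Omega_k$ sits at distance $\gtrsim 2^k$ relative to the dimensions of $\topp(T)$, together with the decay rate $100$ of $\chi$), and the uniform bound $\sum_{s'\in T_{j_0}}\chi_{s'}^{(\infty)}(x)\lesssim 1$ coming from the pairwise disjointness of the same-scale tiles; this yields $\sum_{s'\in T_{j_0}}\int_{\Omega_k}|\varphi_s\varphi_{s'}|\lesssim 2^{-Nk}$. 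For cross-scale pairs the naive pointwise bound is too lossy (small-scale tiles proliferate), so I would extract cancellation from the frequency disjointness: replacing $\one_{\Omega_k}$ by a smooth cutoff $\eta_k$ adapted to $R_k\setminus R_{k-2}$ and applying Plancherel gives
$$
\int \eta_k\,\varphi_s\,\overline{\varphi_{s'}}=\langle\hat\varphi_s,\,\hat\eta_k*\hat\varphi_{s'}\rangle.
$$
The convolution $\hat\eta_k*\hat\varphi_{s'}$ is $\hat\varphi_{s'}$ blurred at frequency scale $\sim 2^{-k}|\omega_{\topp(T)}|$, while the gap between $\omega_{s,1}$ and $\omega_{s',1}$ is $\gtrsim 2^{\max(j_0,j)}|\omega_{\topp(T)}|$, so the separation exceeds the blurring width by a factor $\ge 2^k$, producing a Schwartz factor $2^{-Nk}$. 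Summing geometrically over $s'\in T_j$ and then over $j\ne j_0$ controls the cross-scale contribution; the boundary error $\eta_k-\one_{\Omega_k}$ sits in a thin annulus adjacent to $\Omega_k$ and is handled by repeating the same-scale argument there.

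The main obstacle will be the cross-scale estimate, where small-scale tiles are too numerous for pointwise Schwartz decay alone to suffice and the argument must genuinely harness the frequency disjointness built into the $1$-tree structure via Plancherel against a smooth cutoff. Once both contributions are under control, Schur's test and the duality reduction complete the proof.
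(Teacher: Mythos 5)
Your duality--plus--Schur reduction is fine, and your same-scale estimate (pointwise Schwartz decay of $\varphi_s$ on $\Omega_k$ together with the bounded overlap of the tiles of a fixed scale) is sound. The genuine gap is the cross-scale estimate. The mechanism you invoke there --- disjointness of the 2D Fourier supports versus the blur of $\hat\eta_k$ --- can only ever produce a bound that is \emph{uniform} over all $s'\in T_j$: the tiles of a given scale all have the same $|\hat\varphi_{s'}|$ and differ only by modulations recording their spatial position, and once you estimate by support-versus-blur you have discarded precisely that information. Moreover the separation-to-blur ratio is only $\sim 2^k$, not $2^{k+\max(j,j_0)}$: the supports of $\hat\varphi_s$, $\hat\varphi_{s'}$ are elongated radially (the $\eta$-support coming from $\tilde\beta$ is an interval of fixed multiplicative width, while for consecutive scales of a $1$-tree the ratio of the angular centers $c(\omega_{s',1})/c(\omega_{s,1})$ can be as small as about $5/3$), so there are points of the two supports with the same $\xi$ whose $\eta$-coordinates differ only by an $O(1)$ fraction, i.e.\ by only $\sim 2^k$ widths of $\hat\eta_k$ in the $\eta$-direction. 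So the per-pair bound you can actually extract is $\lesssim 2^{-Nk}$ with no decay in the scale gap and no decay in $\dist(s,s')$; summing it over the $\sim 2^j$ tiles of $T_j$ and then over $j$ is exactly the step that is unavailable --- there is nothing geometric to sum, and replacing the cancellation by absolute values fails for the same reason (one gets bounds of size $2^{(j-j_0)/2}2^{-Nk}$, growing in $j$). The same objection applies to your boundary term $\eta_k-\one_{\Omega_k}$, whose cross-scale part cannot be handled by the same-scale argument. To close the gap you must re-inject spatial information into the cross-scale terms, e.g.\ by exploiting the phases $e^{2\pi i(c(s)-c(s'))\cdot\xi}$ (integration by parts in frequency) to gain decay in $\dist(s,s')$ as well as in $k$.

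The paper avoids this issue by splitting $\Omega_k$ into two pieces rather than splitting the pairs by scale. On $\calc_k$, the part of $\Omega_k$ that is horizontally far from $\topp(T)$, the spatial decay \emph{improves} with the scale: $\|\varphi_s\one_{\calc_k}\|_2\lesssim 2^{-Nk/2-50j}$ for $s\in T^j$, so one can sum crudely over the $\lesssim 2^j$ tiles per scale and over $j$ with no cancellation at all. On $\calb_k$, the vertically far part, where no $j$-gain is possible, the cross-scale terms are removed exactly by orthogonality along each horizontal line, $\int_x\varphi_s(x,y)\overline{\varphi_{s'}(x,y)}\,dx=0$ for $|s|\neq|s'|$ in a $1$-tree (disjointness of the first-coordinate frequency supports), after which only same-scale interactions remain and are summable by tails --- your same-scale argument is the analogue of that last step. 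It is your cross-scale mechanism that needs to be replaced by one of these devices.
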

\begin{rem}
For a classical $1$-dimensional tree, this can be proved by using the extreme spatial decay of the wave packets $\varphi _s $, $s\in T$, away from $\topp (T)$.  We use this in conjunction with orthogonality in the $x$-variable to  handle interactions of functions $\varphi _s$, $\varphi _{s'}$ horizontally close to the tree, where tail estimates do not improve for shorter tiles in the tree.  This is the reason for the decomposition of $\Omega _k$ into $\calb _k$ and $\calc _k$ in the proof below.  
\end{rem}
\begin{proof}
For notational convenience, we will assume that the parallelogram $\topp(T)$ is centered at the origin, has width $1$, and has sides parallel to the coordinate axes.
First note that
\beqa
\sqrt{ \sum_{s\in T} |\langle f \one _{\Omega _k} , \varphi _s \rangle | ^2 } 
&=& 	\sqrt{ \sum_{s\in T} |\langle f \one _{\bold B _k} , \varphi _s \rangle | ^2 } 
	\sqrt{ \sum_{s\in T} |\langle f \one _{\bold C _k} , \varphi _s \rangle | ^2 } \\
&=:& B + C,
\eeqa
where
\beqa
\calb _k &=& \{ (x,y) \in \Omega _k \colon |y| \geq 2^k \} \\
\calc _k &=& \Omega _k \setminus \calb _k .
\eeqa
To estimate $B$ we will need to use orthogonality in the horizontal variable.  To estimate $C$ we will need only spatial decay, as in the one-dimensional case.

Note that by Cauchy-Schwarz
\beqa
B^2 &=& \int _{\mathbf B _k} f \sum _{s\in T} \langle f \one _{\mathbf B _k} , \varphi _s \rangle \varphi _s \\
& \leq & ||f \one _{\Omega _k}||_2 \left( \sum _{s\in T} \sum _{s\in T'} \int _{|y|\geq 2^k} \int _{x\in \bbr}
	\langle f \one _{\mathbf B _k} , \varphi _s \rangle  \langle f \one _{\mathbf B _k} , \varphi _{s'} \rangle
			\varphi _s (x,y) \varphi _{s'} (x,y)	dx dy   \right) ^{1\over 2} .
\eeqa
Also note that if $|s| \neq |s'|$, then for every $y$, we have
\beqa
\int _x \varphi _s (x,y) \varphi _{s'} (x,y) =0.
\eeqa
This follows from the definition of the wave packets $\varphi _s$; specifically, note that $\pi _1 (\supp ( \hat{\varphi} _s ) ) \cap \pi _1 (\supp ( \hat{\varphi} _{s'} ) ) = \emptyset $ whenever $\omega _{s,1} \cap \omega _{s,2} = \emptyset$, which happens whenever $s, s'$ are in the same $1$-tree and $|s| \neq |s'|$.
By symmetry we may estimate $|\langle f \one _{\Omega _k},  \varphi _s \rangle  \langle f \one _{\Omega _k} , \varphi _{s'} \rangle |\leq |\langle f \one _{\Omega _k},  \varphi _s \rangle|^2 $, which gives us

\beqa
&&\sum _{s\in T} \sum _{s'\in T} \int  _{|y|\geq 2^k}  \int _x
	\langle f \one _{B _k} , \varphi _s \rangle  \langle f \one _{\Omega _k} , \varphi _{s'} \rangle
			\varphi _s (x,y) \varphi _{s'} (x,y) \\
&\leq& \sum _{s\in T} \sum _{s'\in T \colon |s|=|s'|}
	|\langle f \one _{B _k}  \varphi _s \rangle|^2
	\int _{|y|\geq 2^k} \int_x   |\varphi _s || \varphi _{s'} |.
\eeqa
But note that
\beqa
\sum _{s'\in T \colon |s|=|s'|} \int _{|y|\geq 2^k} \int_x  |\varphi _s || \varphi _{s'} | \leq 2^{-Nk},
\eeqa
because the prototype $\varphi $ is Schwartz, $s\in T$, and $\Omega _k$ is far away from $\topp (T)$.
Hence
\beqa
B \lesssim 2^{-{N\over 2}k} ||f \one _{\Omega _k}||_2
	\left( \sum _{s\in T} |\langle f \one _{\Omega _k}  \varphi _s \rangle|^2  \right) ^{1\over 2}.
\eeqa
We now estimate $C$.  Define
\beqa
T^j = \{ s\in T \colon |s| = 2^{-j} |\topp (T)| \}.
\eeqa
Note that if $s\in T^j$, then
$|\langle f \one _{\mathbf{C} _k} , \varphi _s \rangle| \lesssim 2^{-{N\over 2}k -50j} ||f \one _{\Omega _k}||_2 $
by Cauchy-Schwarz and the fact that $||\varphi _s  \one _{ \mathbf{c} _k } ||_2 \lesssim 2^{-{N\over 2}k -50j}$.  This last claim follows from the fact that $\varphi _s$ is highly localized to $\topp (T)$, and because $\mathbf{C}_k$ is far away from $\topp (T)$ horizontally.  (Of course we could not make the same argument for $B$ because we can do no better than $||\varphi _s \one _{\calb _k} ||_2 \lesssim 2^{-Nk}$ for $s\in T^j$; i.e., there is no decay in the parameter $j$.)  This is already enough:
\beqa
C \leq \sum _{j\geq 0} \sum _{s\in T^j} |\langle f \one _{\Omega _k}  \varphi _s \rangle|^2
\lesssim 2^{-{N\over 2}k} ||f \one _{\Omega _k}||_2,
\eeqa
which finishes the proof of the lemma.
\end{proof}


\section{Square function estimates} \label{sqfcnsection}
In this section we prove Lemma \ref{lpsquarefcn}.  The proof is similar to the standard proof of $L^p$ boundedness for the analogous one-dimensional square function, with a few tweaks to handle the two-dimensionality.  For notational convenience we will assume, without loss of generality, that the tree $T$ has top that is axis parallel and centered at the origin.  Proving the lemma  with the spatial localization requires us to decompose $\Delta$ spatially as follows.  For $k\geq 1$, define the set $\Omega_ k= 2^k \topp (T) \setminus 2^{k-1} \topp (T)$.  For $k=0$, define $\Omega _k = \topp (T)$.  Now define
\beqa
\Delta _k (f) = \left( \sum_{s\in T} |\langle f , \one _{\Omega _k } \varphi _s \rangle| ^2 {{\one _{ s } } \over {|s|}}  \right)^{1\over 2} .
\eeqa
By Minkowski's inequality, we have
\beqa
\Delta f (x) &=& \left( \sum_{s\in T} |\langle f , \sum _k \one _{\Omega _k } \varphi _s \rangle| ^2 {{\one _{ s }} \over {|s|}}  \right)^{1\over 2} \\
&\leq & \sum _k \Delta _k f (x)
\eeqa
pointwise, so again by Minkowski's inequality we have
\beqa
|| \Delta f ||_ p  \leq \sum _k || \Delta _k f|| _p.
\eeqa
We will prove that for any $N$,
\beqan \label{deltadecay}
|| \Delta _k f|| _p \lesssim 2^{-Nk} ||\one _{\Omega _k} f||_p .
\eeqan
With this, we can use H\"{o}lder's inequality to see that for any $N$, we have
\beqa
||\Delta f||_p & \lesssim & \sum _k 2^{-Nk} ||\one _{\Omega _k} f||_p \\
&\lesssim &  \left( \sum _k 2^{- Nk} \int _{\Omega _k} |f| ^p  \right) ^{1\over p} \\
&\lesssim &  \left(    \int | \beta _{N,T} f | ^p     \right) ^{1\over p},
\eeqa
where $\beta _{N,T}$ is the function defined in the statement of Lemma \ref{lpsquarefcn},
which finishes the proof of Lemma \ref{lpsquarefcn}.  It remains to prove \eqref{deltadecay}.  Note that Lemma \ref{bessel} is exactly this when $p=2$.  By interpolation, it is enough to prove the following weak type estimate:
\beqa
|\{ \Delta _k f > \lambda \} | \lesssim 2^{2k} {{||f|| _1 } \over {\lambda ^1 } }.
\eeqa
By dividing the function $f$ into $\lesssim 2^{2k}$ pieces, we may assume the support of $f$ is contained in a translate of $\topp (T)$.  With this assumption, it is enough to prove for such $f$ that 
\beqa
|\{ \Delta _k f > \lambda \} | \lesssim {{||f|| _1 } \over {\lambda ^1 } }.
\eeqa
Our argument proceeds more or less by the usual path of Calderon-Zygmund decomposition.


Denote by $R_k$ the rectangle with same center and length as $R$ but $2^k$ times the height.  
 Let $\widetilde{\calb}$ be the collection of maximal rectangles of width $w$ taken from the collection such that 
\beqa
{1\over |R_k|} \int _{R_k} |f| > 2^{5k} \lambda,
\eeqa
and for each $R\in \widetilde{\calb}$, let $R' = \pi _1(R) \times \pi_2 (C \topp (T))$.  Then let $\calb =\{ R' \colon R \in \widetilde{\calb} \}$.  We can see already that 
$\sum _{R\in \widetilde{\calb} } |R|  \leq \sum _{R\in \calb} |R| \lesssim {||f||_1 \over \lambda}$.  
This follows from the weak (1,1) inequality for the Hardy-Littlewood maximal function, which holds for rectangles of fixed width: if we write, for $k\geq 0$,
\beqa
\widetilde{\calb} _k = \{ R \in \widetilde{\calb} \colon {1\over |R_k|} \int _{R_k} |f| > 2^{5k} \lambda  \}, 
\eeqa
then we have 
\beqa
\left. \right.
\sum _{R\in \widetilde{\calb} } |R|  \\
&\lesssim & \sum _{k \geq 0 }  2^k {||f||_1 \over 2^{5k} \lambda }  \\
&\lesssim & {||f||_1 \over \lambda }.
\eeqa

For each $(x,y)\in R$, let 
\beqa
b(x,y) = f(x,y) - {1\over |\pi _1 (R)|} \int _{\pi _1 ( R )} f(z,y) dz.
\eeqa
Note that by definition we have for each $y\in \pi _2 (\topp (T))$ that 
\beqa
\int _{\pi _1 (R)} b(x,y)dx =0.
\eeqa
We also have the following helpful fact:
\begin{claim}
For each $y\in \pi _2 (C\topp (T))$, we have 
\beqa
{1\over |\pi _1 (R)|} \int _{\pi _1 ( R )} f(z,y) dz \leq C \lambda.
\eeqa
\end{claim}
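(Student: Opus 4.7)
This is the standard ``the good function is bounded by $C\lambda$'' step of a Calder\'on--Zygmund decomposition, adapted to the present $2$-D setting. The input is the maximality of $R$ in $\widetilde{\calb}$: $R$ is a maximal width-$w$ rectangle such that
$\frac{1}{|R_k|} \int _{R_k} |f| > 2^{5k} \lambda$
holds for some $k \geq 0$. The first consequence of maximality I would record is that if $\tilde R$ denotes any strict length-enlargement of $R$ (keeping the width $w$ fixed), then $\tilde R$ fails the defining condition at every scale:
\beqa
\frac{1}{|\tilde R_k|} \int_{\tilde R_k} |f| \leq 2^{5k} \lambda \qquad \text{for every } k \geq 0.
\eeqa

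Next, I would fix $y \in \pi _2 (C\topp (T))$ and let $k_y$ be the smallest non-negative integer such that $y \in \pi_2(\tilde R_{k_y})$, so that the $2$-D rectangle $\tilde R_{k_y}$ contains the horizontal segment $\{y\}\times\pi_1(R)$. Because $y$ lies in a band $\pi_2(C\topp(T))$ of vertical width $\lesssim w$, and because $R$ is spatially close to the support of $f$ (a translate of $\topp(T)$) for the defining integral to be non-zero, one obtains $k_y = O(1)$. Combining this with the displayed maximality bound at scale $k_y$ gives
\beqa
\int_{\tilde R_{k_y}} |f| \;\lesssim\; \lambda\,|\tilde R_{k_y}|\;\lesssim\; \lambda\, w\, |\pi_1(R)|.
\eeqa

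Finally, I would pass from this $2$-D area bound to the claimed $1$-D line-integral bound. The strip $\tilde R_{k_y}$ has vertical extent $2^{k_y}w \lesssim w$, which coincides up to constants with the vertical extent of $\supp(f)$. Using the assumption that $f$ is supported in a translate of $\topp(T)$ (so in a horizontal $w$-band in $y$), the line integral at the chosen height $y$ is, for a.e.~such $y$, comparable to the $2$-D area integral divided by $w$ on this bounded vertical scale. This gives
\beqa
\frac{1}{|\pi_1(R)|}\int_{\pi_1(R)} f(z,y)\,dz \;\lesssim\; \lambda,
\eeqa
which is the claim.

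\textbf{Main obstacle.} The subtle step is the last one: passing from the $2$-D area bound to a pointwise-in-$y$ line integral bound is not automatic for a general $L^1$ function. I would expect the argument to genuinely use both the support of $f$ in a vertical $w$-band and the boundedness of $k_y$; alternatively, the claim should be read in an a.e.-in-$y$ sense, which is what the subsequent $L^\infty$-bound on the good part of the Calder\'on--Zygmund decomposition actually requires.
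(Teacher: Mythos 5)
There is a genuine gap, and it sits exactly where you flag it: the passage from the two\-/dimensional bound $\int_{\tilde R_{k_y}}|f|\lesssim \lambda\,|\tilde R_{k_y}|$ to a bound on the line average at a fixed height $y$. For a general integrable $f$ this implication is simply false, even for a.e.\ $y$: add to $f$ a small amount of mass concentrated on a thin horizontal strip $\pi_1(R)\times[y_0,y_0+\epsilon]$ with $\epsilon\ll w$ and $y_0\in\pi_2(C\topp(T))$; the averages over all the rectangles $R_k$ (which have height at least $w$) change negligibly, while the line averages at every $y\in[y_0,y_0+\epsilon]$ --- a set of positive measure --- become as large as you wish. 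So neither the ``each $y$'' nor the ``a.e.\ $y$'' reading of the claim follows from area averages alone, and the weaker reading would not suffice anyway: the subsequent Calder\'on--Zygmund step needs the essentially pointwise bound $g\lesssim\lambda$ in order to run $\int|g|^2\lesssim\lambda\|f\|_1$.

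The ingredient your argument never invokes, and which is the heart of the paper's proof, is the vertical frequency localization of $f$: $\widehat f$ is supported in a horizontal band of width $\sim 1/w$, so $f(x,\cdot)=f(x,\cdot)\ast k$ for a one-dimensional Schwartz kernel $k$ with $\widehat k\equiv 1$ on that band, hence with rapid decay at scale $w$. This smooths $f$ at scale $w$ in the vertical variable: $|f(z,y)|$ is pointwise dominated by $\sum_{k\ge 0}2^{-10k}$ times averages of $|f|$ over vertical windows of height $\sim 2^k w$ centered at $y$. Averaging in $z$ over $\pi_1(R)$ converts the line average in the claim into $\sum_{k\ge 0}2^{-10k}\,\frac{1}{|R_k|}\int_{R_k}|f|$, and only now do the stopping bounds ($\lesssim 2^{5k}\lambda$ for each $k$, which is the content of your maximality observation) enter, giving $\lesssim\lambda$ after summation. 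Your outer scaffolding --- maximality of $R$ and control of the averages over the vertical dilates --- matches the paper, but without the band-limitedness/convolution step the conclusion does not follow. (A lesser issue: your assertion that $k_y=O(1)$ is not justified in general, since the translate of $\topp(T)$ supporting $f$, and hence $R$, may sit vertically far from $\pi_2(C\topp(T))$; in that regime the claim is vacuous, so this is minor compared with the missing frequency-support argument.)
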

\begin{proof} [Proof of Claim]
Note that $\widehat{f}$ is supported in the annulus of width ${1\over w}$.  Let $k$ be a function such that $\widehat{k}(\xi)=1$ for 
$\xi \in [-4w,  4w]$.  Then 
\beqa
f(x,y) = \int f(x,w) k(y-w) dw, 
\eeqa
so 
\beqa
{1\over |\pi _1 (R)|} \int _{\pi _1 (R)} |f(z,y)| dz 
&=& {1\over |\pi _1 (R)|} \int _{\pi _1 (R)} |\int f(z,w) k(y-w) dw| dz \\
\eeqa
Because $k$ rapidly decays away from a rectangle of height $w$, if we denote by $R_k$ the rectangle with same center and length as $R$ but $2^k$ times the height, then
\beqa
& \left. \right. & {1\over |\pi _1 (R)|} \int _{\pi _1 (R)} |\int f(z,w) k(y-w) dw| dz \\
& \lesssim & {1\over |\pi _1 (R)| } \int _{\pi _1 (R)} \sum_k {1\over 2^k} \int _{2_k} ^{2^k} f(z,w) 2^{-10k} dw dz \\
&\leq & \lambda,
\eeqa
where the last inequality is by assumption on $R$. 
\end{proof}
With this claim, we define 
\beqa
g(x,y) = f(x,y) \text{  for  } (x,y) \not\in \bigcup _{R\in \calb} R
\eeqa
and
\beqa
g(x,y) = {1\over |\pi _1 (R)|} \int _{\pi _1 ( R )} f(z,y) dz \text{  for  } (x,y) \in R \in \calb.
\eeqa
Note that by the claim we have $g(x,y) \lesssim \lambda$ for $(x,y) \in R$.  Further, for almost every 
$(x,y) \not\in \bigcup _{R\in \calb} R$ such that $g(x,y)=f(x,y) >>\lambda$, there exists a horizontal line segment $L$ through $(x,y)$ such that 
${1\over |L|} \int _L f >>\lambda$, which implies there is a rectangle of width $w$ containing $(x,y)$ on which the average of $f$ is larger than $\lambda$, contradicting our assumption that $(x,y) \not\in \bigcup _{R\in \calb} R$.  Hence $g \lesssim \lambda $ almost everywhere.

To see the purpose of including the rectangles $5CR'$ in the exceptional set (rather than a small dilate of $R$ itself), consider a rectangle $R$ north of the tree $T$, and a mean zero function $h$ supported on $R$.  Analysis of $\int _{(5CR) ^c } \Delta h $ is a bit more complicated than in the one-dimensional case because the collection $\{ \varphi _s \} _{s\in T} $ has no orthogonality in the vertical direction.  However by excluding $R'$, we need only consider small tiles $s$ supported away from the vertical translate of $5CR$,  allowing us to take advantage of the spatial decay (in the horizontal variable) of the functions $\varphi _s $.  

With this modification, the proof now proceeds as expected:  Use the fact that $|g| \lesssim \lambda$, together with the $L^2$ estimate on $\Delta $ to see
\beqa
|\{ \Delta _k g > \lambda \}| \lesssim {{\int |g|^2 } \over {\lambda ^2} } \lesssim {{||f||_1} \over {\lambda } }.
\eeqa
Additionally, by the Chebyshev and triangle inequalities, together with sublinearity of $\Delta _k$, we have
\beqa
|\{ x\not\in E \colon \Delta _k (\sum_R b_R ) > \lambda \}| &\leq & {1\over {\lambda }} \sum _R \int _{(5CR')^c } |\Delta _k(b_R)|.
\eeqa
To finish the proof we show that for each $R \in \calb$, we have
\beqan \label{outside}
\int _{( 5CR')^c } |\Delta _k(b_R)| \lesssim \int |b_R|,
\eeqan
which will give us that
\beqa
|\{ x\not\in E \colon \Delta (\sum_R b_R ) > \lambda \}| \lesssim {1\over {\lambda }} \sum _R \int |b_R|
\lesssim \sum _R |R| \lesssim {{||f||_1 } \over {\lambda }}.
\eeqa

Once again, to prove \eqref{outside}, we essentially follow the one-dimensional argument, dealing with a few extra nuisances along the way.  A reader having trouble seeing through the technicalities should note that all of the computations below are essentially the same as in the one-dimensional case.  The problem is understanding why the present situation is essentially the same as the one-dimensional case.  More specifically, to prove \eqref{outside}, it is convenient to make a few simplifying (and valid) assumptions.  
For each parallelogram $s\in T$ define
\beqa
\tilde{s} = \pi _1 (s) \times C \pi _2 (\topp (T)) .
\eeqa
Since $s \subseteq \tilde{s}$, it is clear that if we define
\beqan \label{newdelta}
\tilde{\Delta _k } f = \left( \sum_{s\in T} |\langle f \one _{\Omega_k}, \varphi _s \rangle| ^2 {{\one _{\tilde{s} }  } \over {|s|}}  \right)^{1\over 2} ,
\eeqan
then $\Delta _k f \leq \tilde{\Delta _k} f$ pointwise.  For each $s\in T$, we know that $\pi _1 (\tilde{s})$ is contained in the union of two dyadic intervals $\tilde{s} _L$ and $\tilde{s} _R$ each of size $\lesssim \pi _1 (\tilde{s}) $.  Further, because the set of tiles of a given size and orientation partition $\bbr ^2$ (i.e., for each $\omega \in \cald$, we have 
$\bigcup _{R\in \calu _{\omega}} R = \bbr ^2$ ; see the definitions in Section \ref{modelreduction}), and because $|\pi _1 (s)| \geq |\pi _2 (s)|$ we know that for any dyadic interval $I$, there are $\lesssim 1$ tiles $s\in T$ such that  $I = \pi _1 ( \tilde{ s_L })$ or $I = \pi _1 (\tilde{ s_R })$.  All of this allows us to assume (possibly after dividing $T$ into $\sim 1$ pieces) that the tiles $s$ are parameterized by dyadic intervals, and that for each $x\in C \topp (T)$, and each dyadic interval $I$, there is at most one $s\in T$ such that $x\in \tilde{s}$ and $\pi _1 (\tilde{s}) = I$.

To prove \eqref{outside}, we split the sum inside $\Delta f$ into two pieces, one over tiles whose vertical projection is smaller than the length of $R$, and the other over tiles whose vertical projection is larger than the length of $R$.  We begin by controlling the sum over smaller tiles.  Note that the dominant term in both cases comes from tiles such that $|\pi _1 (s) | \sim | \pi _1 (R)|$.  In the integral below, we need only consider $x\in C\topp (T)$ such that $\pi _1 (x) \not\in \pi _1 (5CR)$.  This allows us to prove the desired estimate using spatial decay alone.  Further, since $\one _{\tilde{s}} (x)$ is constant on vertical segments projecting to 
$\pi _2 (C\topp (T))$, we have 
\beqa
&\left.\right. & \int _{x\in K\topp (T)\cap ( 5CR' )^c } \left(   \sum _{|\pi _1(s)| \leq |\pi _1 (R)|} |\langle b_R , \varphi _s \rangle |^2 {{\one _s } \over {|s|} } \right)^{1\over 2} \\ 
&\lesssim &
\int _{x\in K\topp (T)\cap ( 5CR' )^c } \left(   {{||b_R||_1 ^2 }\over {|R|^2}} \left( {{|x- c(R)| } \over {|\pi _1 (R)|}} \right)^{-10}  \right)^{1\over 2} \\
&\lesssim & {{||b_R|| _1   }\over {|\pi _1 (R)|}} \int _{t\in \bbr \colon |t| \geq 5 |\pi _1 (R)|} {1\over {|{t\over {|\pi _1 (R)|}}|^5} } dt \\
&\lesssim & ||b_R || _1.
\eeqa
We emphasize that the integral in the second-to-last line is one-dimensional.
It remains to control the sum over the tiles with vertical projection larger than $|\pi _1 (R)|$.  This requires using the mean-zero-along-horizontal-line-segments property of the function $b_R$.  Note that for any smooth function $h$, we have
\beqa
\langle b_R, h \rangle &=& \int _{y \in \pi _2 (R)} \int _{x\in \pi _1 (R)} b_R (x,y) h(x,y)  dx dy \\
&\leq & \int _{y \in \pi _2 (R)} \int _{x\in\pi _1 (R)} |b_R (x,y)| |h(x,y) - h(c_{\pi _1 (R)}, y)| dx dy.
\eeqa
Our goal is to apply this to the wave packets $\varphi _s$.  Specifically, we will show
\begin{claim}
\beqa
|\langle b_R, \varphi _s \rangle| & \lesssim & ||b_R||_1 
		{1\over {|s|^{1\over 2} } } {|\pi _1 (R)| \over |\pi _1 (s)| } 
	\min \left(1, \left({{|x - c(R)|} \over |\pi _1 (s)|} \right) ^{-10 } \right)
\eeqa
\end{claim}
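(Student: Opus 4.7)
The plan is to exploit the two facts highlighted just before the claim: first, that $b_R$ has zero mean along every horizontal segment contained in $\pi_1(R)$; and second, that in the regime of interest the tile is much wider than $R$, i.e.\ $|\pi_1(s)| \geq |\pi_1(R)|$, so that $\varphi_s$ is essentially constant on horizontal scales of length $|\pi_1(R)|$. The three amplitude factors in the claim correspond to three standard ingredients: $\|b_R\|_1$ from $L^1$-integration against $b_R$, $|s|^{-1/2}$ from the $L^2$-normalization of $\varphi_s$, and $|\pi_1(R)|/|\pi_1(s)|$ from the mean-zero cancellation in the horizontal direction, which converts a size estimate on $\varphi_s$ into a derivative estimate.

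Concretely, I would start from the rewriting
\[
\langle b_R, \varphi_s\rangle = \int_{\pi_2(R)} \int_{\pi_1(R)} b_R(u,y)\bigl[\varphi_s(u,y) - \varphi_s(c_{\pi_1(R)}, y)\bigr]\, du\, dy,
\]
and apply the mean value theorem in the first variable. Since $\varphi_s$ is $L^2$-normalized and Schwartz-adapted to $s$, with horizontal scale $|\pi_1(s)|$ (the long direction of the parallelogram), the derivative satisfies
\[
|\partial_{x_1}\varphi_s(u',y)| \lesssim \frac{1}{|s|^{1/2}\,|\pi_1(s)|}\,\chi_s^{(\infty)}(u',y).
\]
Plugging this into the mean value estimate $|\varphi_s(u,y) - \varphi_s(c_{\pi_1(R)}, y)| \leq |\pi_1(R)|\sup_{u'} |\partial_{x_1}\varphi_s(u',y)|$ and integrating $|b_R|$ against the result already yields the bound with amplitude $\|b_R\|_1 \cdot |s|^{-1/2} \cdot |\pi_1(R)|/|\pi_1(s)|$ and a Schwartz tail $\chi_s^{(\infty)}$ evaluated on points of $R$.

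To upgrade this to the $\min$ factor stated in the claim, one uses that for any $x\in \tilde s$ one has $\pi_1(x)\in \pi_1(s)$, so that the horizontal distance from $c(s)$ to any $u\in \pi_1(R)$ is comparable (up to an additive $|\pi_1(s)|$) to the horizontal component of $|x - c(R)|$; meanwhile, both $R$ and $\tilde s$ lie in the fixed vertical slab $\pi_2(C\topp(T))$, so the vertical component of $|x-c(R)|$ is harmlessly $O(|\pi_2(C\topp(T))|)\lesssim|\pi_1(s)|$. Consequently, when $|x-c(R)|/|\pi_1(s)| \gtrsim 1$ the Schwartz decay of $\chi_s^{(\infty)}$ on $\pi_1(R)$ provides the $(|x-c(R)|/|\pi_1(s)|)^{-10}$ factor, and otherwise one trivially bounds $\chi_s^{(\infty)}\lesssim 1$. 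The only non-routine step is this last piece of bookkeeping between the internal integration variable $u\in \pi_1(R)$ and the external spatial parameter $x$; beyond that, the argument is the textbook Calderón--Zygmund cancellation estimate applied in the horizontal direction only.
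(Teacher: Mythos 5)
Your outline coincides with the paper's proof in structure: both exploit the mean-zero property of $b_R$ along horizontal segments, write $\langle b_R,\varphi_s\rangle$ as an integral of $b_R(u,y)\bigl[\varphi_s(u,y)-\varphi_s(c_{\pi_1(R)},y)\bigr]$, and gain the factor $|\pi_1(R)|/|\pi_1(s)|$ from a derivative estimate, with $\|b_R\|_1$, $|s|^{-1/2}$ and the Schwartz tail supplying the remaining factors. The one step you assert rather than argue, namely $|\partial_{x_1}\varphi_s|\lesssim |s|^{-1/2}|\pi_1(s)|^{-1}\chi_s^{(\infty)}$, is precisely the point to which the paper devotes its proof: $s$ is a tilted parallelogram whose wave packet oscillates at vertical frequency $\sim 1/w$, so a horizontal displacement is not a displacement purely along the long side of $s$, and for a general tile of $\calu$ (slope of order $1$, horizontal frequency modulation of order $1/w$) your stated bound would be false. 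It holds here only because the tree has been normalized to have horizontal top, so that $\omega_{\topp(T)}\subseteq\omega_s$ forces the slope of $s$ to be $O(w/|\pi_1(s)|)$ and the horizontal frequency of $\varphi_s$ to be $O(1/|\pi_1(s)|)$. The paper makes this quantitative by decomposing the horizontal displacement $(u,y)-(c_{\pi_1(R)},y)$ in the coordinate frame of $s$: a component of size at most $|\pi_1(R)|$ along the long side, paired with the directional derivative of size $|s|^{-1/2}|\pi_1(s)|^{-1}$, plus a transverse component of size at most $w|\pi_1(R)|/|\pi_1(s)|$, paired with the transverse derivative of size $|s|^{-1/2}w^{-1}$; both contributions give the claimed amplitude $\|b_R\|_1\,|s|^{-1/2}\,|\pi_1(R)|/|\pi_1(s)|$. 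With that justification supplied, your argument, including the bookkeeping converting the Schwartz tail in the integration variable $u$ into the factor $\min\bigl(1,(|x-c(R)|/|\pi_1(s)|)^{-10}\bigr)$ for $x\in\tilde s$, is the same as the paper's.
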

\begin{proof}
We must deal with a small technicality here:  the tiles $s$ need not be precisely axis parallel, but fortunately they are close.  Precisely, we have that the vertical component (when using the coordinate frame of $s$) of $(x,y) - (c_{\pi _1(R)}, y)$ is less than $ { w| \pi _1 (R) | \over |\pi _1 (s)| }$ .  Of course we have the horizontal component (when using the coordinate frame of $s$) of $(x,y) - (c_R, y)$ is less than $  | \pi _1 (R) | $.  Further, we know that 
\beqa
D_1 \varphi _s (x,y) & \leq & {1\over \sqrt{|s|}} {1\over |\pi _1 (s)| } {\partial \over \partial x } 
	\varphi ({x\over |\pi _1 (s)|}, {y\over w}) \\
D_2 \varphi _s (x,y) & \leq & {1\over \sqrt{|s|}} {1\over w}  {\partial \over \partial y } 
	\varphi ({x\over |\pi _1 (s)|
}, {y\over w}). \\
\eeqa
Hence 
\beqa
|\varphi _s (x,y) - \varphi _s (c(\pi _1 (R)), y)| & \lesssim &
{ w| \pi _1 (R) | \over |\pi _1 (s)| } {1\over \sqrt{|s|}} {1\over w}  {\partial \over \partial y } 
	\varphi ({x\over |\pi _1 (s)|}, {y\over w}) \\
& + & | \pi _1 (R) | {1\over \sqrt{|s|}} {1\over |\pi _1 (s)| } {\partial \over \partial x } 
	\varphi ({x\over |\pi _1 (s)|}, {y\over w}).
\eeqa
\end{proof}

The claim yields, writing $\Gamma = K\topp (T)\cap ( 5CR' )^c $, 
\beqa
&& \int _{\Gamma } \left(   \sum _{|\pi _1(s)| > |\pi _1 (R)|} |\langle b_R , \varphi _s \rangle |^2 {{\one _{\tilde{s}} (x)} \over {|s|} } \right)^{1\over 2} dx \\
&\lesssim & \int _{ \Gamma }  ||b_R||_1 |\pi _1 (R)|
	\left( \sum _{|\pi _1 (s)| > |\pi _1 (R)|}
	\left( {  \min \left(1, \left({{|x - c(R)|} \over |\pi _1 (s)|} \right) ^{-10 } \right)  \over {|\pi _1 (s)||s|^{1\over 2} } } 
 \right)^2
			{{\one _{\tilde{s}} (x)} \over {|s|} } \right)^{1\over 2} dx \\
&\lesssim &  ||b_R || _1.
\eeqa
This completes the proof of \eqref{outside} and thus the proof of Lemma \ref{lpsquarefcn}.

\section{$BMO$ type estimates for the square function} \label{bmosection}
In this section we prove Lemma \ref{bmosquarefcn}.  As in the previous section, we consider the related operator $\tilde{\Delta}$.  See \eqref{newdelta} for the definition, as well as the discussion immediately following the definition for several simplifying assumptions that we make.  To prove the Lemma, we prove the following key claim.  Here, and in the rest of the proof, we write $\sigma = \size (T)$; note that we also have 
\beqa
\sigma \sim \left( {1\over {|\topp (T)|}} \sum _{s\in T} | \langle f, \varphi _s \rangle | ^2 \right) ^{1\over 2} .
\eeqa
As in the last section, we consider a slightly modified version of $\Delta$:  define 
\beqa
\tilde{\Delta  } f = \left( \sum_{s\in T} |\langle f , \varphi _s \rangle| ^2 {{\one _{\tilde{s} }  } \over {|s|}}  \right)^{1\over 2} 
\eeqa
where the rectangles $\tilde{s}$ are defined immediately above $\eqref{newdelta}$.
\begin{claim} \label{jn}
\beqa
| \{ \tilde{\Delta} f > \sigma n \} | \lesssim 2^{-n^2 } | \{ \tilde{\Delta} f > \sigma  \} |.
\eeqa
(Of course we do not need the full exponential-squared decay, but we do have it.)
\end{claim}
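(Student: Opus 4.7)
The plan is a John--Nirenberg-style stopping-time iteration. The base case is the $L^2$ identity (from orthogonality of the $\varphi_s$ at different scales in a $1$-tree, cf.\ Lemma \ref{bessel})
\[
\int \bigl(\tilde{\Delta} f\bigr)^2 \,\sim\, \sum_{s\in T} |\langle f,\varphi_s\rangle|^2 \lesssim \sigma^2 |\topp(T)|,
\]
together with the crucial self-similarity property that for any horizontal dyadic sub-interval $I$ of the projection of $\topp(T)$, the subcollection $T^I := \{s \in T : \pi_1(s) \subsetneq I\}$ is itself a $1$-tree inheriting uniform size $\lesssim \sigma$ from the hypothesis on $T$, and its associated square function $\tilde{\Delta}^I$ satisfies $\|\tilde{\Delta}^I f\|_2^2 \lesssim \sigma^2 |R_I|$ where $R_I = I \times \pi_2(C\topp(T))$. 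This makes the estimate amenable to iteration.

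Next I would carry out a stopping-time decomposition. For each $x \in \Omega_n := \{\tilde{\Delta} f > \sigma n\}$, choose the largest dyadic interval $I(x)$ for which the ``large-scale'' partial sum over tiles $s \in T$ with $\pi_1(s) \supseteq I(x)$ already saturates the budget $\sigma(n-1)$. Maximality extracts a pairwise disjoint family $\{I_j\}$; set $R_j = I_j \times \pi_2(C\topp(T))$. Modulo a negligible tail (handled by the Schwartz decay in the vertical direction), $\Omega_n \subseteq \bigcup_j R_j$, and on each $R_j$ any excess over $\sigma n$ must come from the residual ``small-scale'' square function $\tilde{\Delta}^{I_j} f$. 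One then compares $|R_j \cap \Omega_{n+1}|$ with $|\{x \in R_j : \tilde{\Delta}^{I_j} f(x) > c\sigma\}|$ and sums over $j$; disjointness of the $R_j$ gives $\sum_j |R_j| \lesssim |\Omega_1|$, producing the base of the iteration.

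To upgrade this from merely exponential to quadratic-exponential decay, one needs Khintchine-type $L^p$ bounds $\|\tilde{\Delta}^I f\|_p \lesssim \sqrt{p}\,\sigma\,|R_I|^{1/p}$, uniform in $p \geq 2$. Chebyshev optimized over $p$ converts such a bound into $|\{\tilde{\Delta}^I f > k\sigma\}| \lesssim e^{-c k^2} |R_I|$, which is compatible with the iteration since the $L^p$ bound inherits the same self-similarity under passing from $T$ to $T^{I_j}$. The main obstacle is precisely establishing the $\sqrt{p}$ growth of the norm: in the two-dimensional wave-packet setting this is not immediate from Bessel's inequality, and the natural route is to iterate orthogonality scale-by-scale in $T$ while using the refined Bessel inequality of Lemma \ref{bessel} to control the vertical tails that escape $\topp(T)$ at each stage. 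Once the sub-Gaussian tail bound is in hand for the local square functions $\tilde{\Delta}^{I_j}$, summing the resulting Chebyshev estimates over $j$ and telescoping through the stopping-time tree yields the claimed $2^{-n^2}$ decay.
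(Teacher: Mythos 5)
Your stopping-time skeleton is sound and, for the first stage, close in spirit to the paper's argument, but as written the proposal does not prove the stated claim: the only decay you actually extract from the iteration is $2^{-cn}$ (each generation you compare with $\{\tilde{\Delta}^{I_j} f > c\sigma\}$ and lose a fixed measure fraction), and the upgrade to $2^{-n^2}$ is deferred to a Khintchine-type bound $\|\tilde{\Delta}^I f\|_p \lesssim \sqrt{p}\,\sigma |R_I|^{1/p}$ that you explicitly label ``the main obstacle'' and never establish. That bound is not an auxiliary tool here: by Chebyshev it is essentially \emph{equivalent} to the sub-Gaussian tail estimate you are trying to prove, so invoking it (or promising to get it by ``iterating orthogonality scale-by-scale'') is circular as a proof strategy. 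This is the genuine gap.

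The missing idea is much simpler: run the John--Nirenberg iteration on $(\tilde{\Delta} f)^2$ rather than on $\tilde{\Delta} f$. Since the claim is equivalent to
\begin{equation*}
| \{ (\tilde{\Delta} f)^2 > n \sigma ^2 \} | \lesssim 2^{-n} |\{ (\tilde{\Delta} f)^2 >  \sigma ^2 \}| ,
\end{equation*}
one works with the additive quantity $a_{I,K}(x)=\sum_{I\subseteq J\subseteq K} |\langle f,\varphi_{s(J)}\rangle|^2 \one_{\tilde{s}(J)}(x)/|s(J)|$. Two facts drive the iteration: first, the per-tile bound $|\langle f,\varphi_{s(I)}\rangle|^2/|s(I)|\le \sigma^2$ means a maximal stopping interval overshoots its threshold by at most an additive $2\sigma^2$, so each generation advances the level of the \emph{square} by a bounded amount; second, the uniform size hypothesis of Lemma \ref{bmosquarefcn} (every subtree has size $\lesssim\sigma$, which you correctly noted is inherited by the localized trees $T^I$) gives, via Chebyshev, that the stopping intervals of the next generation cover at most a fixed fraction of $K$. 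Iterating yields measure decay $2^{-n}$ after $n$ generations, and since reaching level $n\sigma$ for $\tilde{\Delta} f$ forces passage through $\sim n^2$ generations, the $2^{-n^2}$ decay falls out with no sub-Gaussian input. (As the paper remarks, the application in Lemma \ref{bmosquarefcn} only needs exponential decay, so your first stage would suffice for that purpose; but it does not prove the claim as stated.)
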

With the Claim, we are almost done:
\beqa
|| \tilde{\Delta} f  || _2 ^2  
&\lesssim & \int_{ \{ \tilde{\Delta} f \leq  \sigma  \} } (\tilde{\Delta} f )^2 
	+ \sum _n \sum _{n=1} ^{\infty} (\sigma n)^2 | \{ \tilde{\Delta} f >  n\sigma  \} |    \\
&\lesssim & \int_{ \{ \tilde{\Delta} f \leq  \sigma  \} } (\tilde{\Delta} f )^2 
	+ \sum _n \sum _{n=1} ^{\infty} (\sigma n)^2 |2^{-n^2} \{ \tilde{\Delta} f >  \sigma  \} |    \\
&\lesssim & \int_{ \{ \tilde{\Delta} f \leq  \sigma  \} } (\tilde{\Delta} f )^2 
	+ \sigma ^2 | \{ \tilde{\Delta} f >  \sigma  \} |    \\
&\lesssim & \sigma \int _{ \{ \tilde{\Delta} f \leq  \sigma  \} }  \tilde{\Delta} f
	+ \sigma \int _{ \{ \tilde{\Delta} f > \sigma  \} }  \tilde{\Delta} f \\
&=&  \sigma \int  \tilde{\Delta} f.
\eeqa
With this, we see that 
\beqa
\sigma ^2 |\topp (T)| \sim || \tilde{\Delta} f  || _2 ^2  \lesssim \sigma \int  \tilde{\Delta} f,
\eeqa
which proves that 
\beqa
|| \tilde{\Delta} f  || _2 \sim \sigma |\topp (T)|^{1\over 2} \lesssim {1\over |\topp (T)|^{1\over 2} } \int \tilde{\Delta} f,
\eeqa
which is what we need.
It remains to prove the claim.

\begin{proof} [Proof of Claim \ref{jn}  ]

Of course to prove the claim it is enough to show that
\beqa
|\{ \tilde{\Delta } f > \sqrt{n} \sigma \} | \lesssim 2^{-n} |\{ \tilde{\Delta } f > \sigma \} |,
\eeqa
and this is equivalent to showing
\beqa
|\{ (\tilde{\Delta } f) ^2 > n \sigma ^2 \} | \lesssim 2^{-n} |\{ (\tilde{\Delta } f) ^2 >  \sigma ^2 \} |  ,
\eeqa
which can be shown in a rather straightforward manner following the proof of the John-Nirenberg inequality.
Recall that for each dyadic $I$ we have an associated tile in $T$, which we call $s(I)$.
For notational convenience, define for intervals $I,K$
\beqa
a_{I, K} (x)= \sum _{I \subseteq J \subseteq K}   |\langle f, \varphi _{s(J)} \rangle |^2  {{\one_s (x)} \over {|s(J)|}} .
\eeqa
We first note that for any $K$, if $I$ is a maximal interval on which
\beqa
a_{I,K}  > m \sigma ^2,
\eeqa
then we know
\beqa
a_{I,K}  < (m+2) \sigma ^2,
\eeqa
since
\beqa
|\langle f, \varphi _{s(I)} \rangle |^2  {1 \over {|s(I)|}} \leq \sigma ^2 .
\eeqa
We begin by defining a collection of intervals $\cali _0$:
\beqa
\cali _0 = \{ \text{ maximal dyadic } I \colon a_{I,\pi _1 (C\topp (T) ) }  > 100\sigma ^2 \} .
\eeqa
Then having defined $\cali _{n-1}$, define for any $K \in \cali _{n-1}$
\beqa
\cali _n (K) &=& \{  \text{ maximal dyadic } I \colon a_{I,K }  > 100\sigma ^2 \} \\
\cali _n &=& \bigcup _{K \in \cali _{n-1} } \cali _n (K) .
\eeqa
We remark that for any $K$,
\beqa
\bigcup _{I \in \cali _n (K) } |I| \leq {1\over {2}} |K|.
\eeqa
To see this we only need to use Chebyshev and the estimate on $\size (T)$:
\beqa
\left| \bigcup _{I \in \cali _n (K) } I \right|
& \leq & {1\over {10\sigma ^2 }} \int  a_{I, K} \\
& \leq & {1\over {10\sigma ^2 }} \sum _{J \subseteq K} |\langle f, \varphi _{s(J)} \rangle |^2 \\
& \leq & {1\over {10}} |K|,
\eeqa
where the last inequality is due to the estimate on $\size (T)$.  Similarly, 
\beqa
|\bigcup _{I\in \cali _0} I | \leq {1\over 2} |\pi _1 (C\topp (T))|.
\eeqa
Putting together all $K$ in $\cali _{n-1}$ gives us that
\beqa
\bigcup _{I \in \cali _n} |I| \leq {1\over 2} \bigcup _{I \in \cali _{n-1} } |I|,
\eeqa
and iterating this gives us that
\beqa
\bigcup _{I \in \cali _n} |I| \leq 2^{-n} \bigcup _{I \in \cali _0} |I|,
\eeqa
which proves Claim \ref{jn} since
\beqa
(\tilde{\Delta }f ) ^2 (x) \lesssim n \sigma ^2
\eeqa
for $x$ such that $\pi _1 (x) \not \in \bigcup _{I \in \cali _n} I $.

\end{proof}


\section{Appendix: The case $p>2$} \label{pgreater2}


In this appendix we briefly discuss the proof of Theorem \ref{main} for $p>2$, 
which is essentially the proof in \cite{LL1}.

Following the tree decomposition of Section \ref{organizationsection} and the remarks
in Section \ref{lemmas}, we need to show   
\beqa
\sum_{\delta }  \sum_{\sigma}  \sum_{T \in \calt _{\delta ,\sigma} }
	 \delta \sigma |\topp (T)| \lesssim |F|^{\frac 1p} |E|^{1-\frac 1p} .
\eeqa
This time we care most about $p$ close to $\infty$.
We may assume $|E|\leq |F|$ because if $|E|>|F|$ then
we may apply the previous arguments for the case $p\leq 2$.  We emphasize here that there is no circularity.  Both the argument in this section (in which we assume $|E|\leq |F|$ ) and the argument in the bulk of the paper (in which we assume $|E|\geq |F|$) work when 
$p=2$.  Hence the $p=2$ case of the estimate in \eqref{modelthm} is established for arbitrary $E,F$.  This allows us to assume $|E|\leq |F|$ in this section, where $p\geq 2$, and allows us to assume $|E|\geq |F|$ in the earlier part of the paper, where $p \leq 2$.

By Estimates \ref {size} and \ref{density} it suffices to prove 
\beqan
\label{plarge}
\sum_{\delta }  \sum_{\sigma}  \sum_{T \in \calt _{\delta ,\sigma} }
	 \delta \sigma 
\min ({{ |E|} \over {\delta }}, {{|F|}\over {\sigma ^2}})
\lesssim |F|^{\frac 1p} |E|^{1-\frac 1p} 
\eeqan
for $p\geq 2$.

The following simple estimate will be helpful:
\begin{claim} \label{basic}
For any $\delta$, we have
\beqa
\sum_{\sigma} 	\delta \sigma \min ({{ |E|} \over {\delta }}, {{|F|}\over {\sigma ^2}}) \lesssim \sqrt{\delta |E||F|}.
\eeqa
\end{claim}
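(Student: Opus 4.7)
The plan is to split the sum over dyadic $\sigma$ at the crossover point where the two arguments of the minimum are equal, and observe that in each regime the dyadic sum is geometric and controlled by the boundary term.

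First I would solve $\frac{|E|}{\delta}=\frac{|F|}{\sigma^{2}}$, obtaining the threshold $\sigma_{\ast}:=\sqrt{\delta|F|/|E|}$. For $\sigma\le\sigma_{\ast}$ the minimum equals $|E|/\delta$, so the summand simplifies to $\delta\sigma\cdot|E|/\delta=\sigma|E|$; for $\sigma\ge\sigma_{\ast}$ the minimum equals $|F|/\sigma^{2}$, so the summand simplifies to $\delta|F|/\sigma$.

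Next I would sum each of the two geometric series. For the low range,
\[
\sum_{\sigma\le\sigma_{\ast}}\sigma|E|\ \lesssim\ \sigma_{\ast}|E|\ =\ \sqrt{\delta|E||F|},
\]
using that the dyadic sum is dominated by its largest term. For the high range,
\[
\sum_{\sigma\ge\sigma_{\ast}}\frac{\delta|F|}{\sigma}\ \lesssim\ \frac{\delta|F|}{\sigma_{\ast}}\ =\ \sqrt{\delta|E||F|},
\]
again because the dyadic sum is dominated by the smallest term (here the smallest $\sigma$ is $\sim\sigma_{\ast}$). Adding the two bounds yields the claim.

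There is essentially no obstacle here: it is a routine balancing of two competing estimates. The only thing to watch is that the sum over $\sigma$ really is dyadic so that the geometric sums telescope to their endpoint values (which is why Section~\ref{organizationsection} organized the tiles into dyadically indexed collections $\calt_{\delta,\sigma}$ in the first place), and that we do not need an upper cutoff on $\sigma$ beyond $\sigma\lesssim 1$, which is harmless since both summands decay in the relevant direction.
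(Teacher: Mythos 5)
Your proposal is correct and is exactly the paper's argument: split the dyadic sum over $\sigma$ at the crossover value $\sigma_{\ast}=\sqrt{\delta|F|/|E|}$ where the two terms of the minimum agree, and sum the resulting geometric series, each dominated by its endpoint term $\sqrt{\delta|E||F|}$.
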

\begin{proof}
We need only observe that the two terms in the minimum are equal when $\sigma = \sqrt{\delta {{|F|} \over {|E|}}  }$ and split the sum over $\sigma$ accordingly.
\end{proof}

We split the sum (\ref{plarge}) in $\delta$ into two pieces, with the dividing line being
$\delta = {{|E|} \over{|F|}}$.  For smaller $\delta$, we use Claim \ref{basic} above:
\beqa
\sum _{\delta \leq {{|E|} \over{|F|}} }
\sum_{\sigma} 	\delta \sigma \min ({{ |E|} \over {\delta }}, {{|F|}\over {\sigma ^2}})
&\lesssim &\sum _{\delta \leq {{|E|} \over{|F|}}}
\sqrt{\delta |E||F|}  \\
&\lesssim &  |E| .
\eeqa
For larger $\delta$, we use the estimate $\size \lesssim 1$:
\begin{claim} \label{sizeclaim}
If the function in the definition of $\size (T)$ is called $f$, then 
\beqa
\size (T) \lesssim ||f||_{\infty}.
\eeqa
\end{claim}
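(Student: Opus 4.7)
The plan is to bound $\size(T)$ using the localized Bessel inequality from Lemma \ref{bessel}, together with the trivial bound $\|g\|_2 \leq \|g\|_\infty |\supp g|^{1/2}$. Since $\size(T)$ is defined as a supremum over $1$-tree subtrees $T' \subseteq T$, it suffices to establish
\[
\sum_{s \in T'} |\langle f, \varphi_s \rangle|^2 \lesssim \|f\|_\infty^2 \, |\topp(T')|
\]
for every $1$-tree $T'$, and this is where Lemma \ref{bessel} enters directly.

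First, I would partition $\bbr^2$ into the annular shells $\Omega_k$ around $\topp(T')$ introduced in Lemma \ref{bessel}: $\Omega_0 = \topp(T')$ and $\Omega_k = R_k \setminus R_{k-1}$ with $R_k = 2^k \topp(T')$ for $k \geq 1$. Writing $f = \sum_{k \geq 0} f \one_{\Omega_k}$ and applying Minkowski's inequality in the $\ell^2_s$ norm yields
\[
\Bigl(\sum_{s \in T'} |\langle f, \varphi_s \rangle|^2\Bigr)^{1/2} \leq \sum_{k \geq 0} \Bigl(\sum_{s \in T'} |\langle f \one_{\Omega_k}, \varphi_s \rangle|^2\Bigr)^{1/2}.
\]

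Next I would invoke Lemma \ref{bessel} to each shell, which gives for any $N > 0$ that the $k$-th summand is bounded by $2^{-Nk/2} \|f \one_{\Omega_k}\|_2$. The trivial bound $\|f \one_{\Omega_k}\|_2 \leq \|f\|_\infty |\Omega_k|^{1/2}$, combined with $|\Omega_k| \lesssim |R_k| \lesssim 2^{2k} |\topp(T')|$, produces
\[
\Bigl(\sum_{s \in T'} |\langle f, \varphi_s \rangle|^2\Bigr)^{1/2} \lesssim \|f\|_\infty |\topp(T')|^{1/2} \sum_{k \geq 0} 2^{-Nk/2} 2^{k}.
\]
Choosing $N$ large (any $N>2$ works) makes the geometric sum convergent. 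Dividing by $|\topp(T')|^{1/2}$ and taking the supremum over $1$-trees $T' \subseteq T$ gives $\size(T) \lesssim \|f\|_\infty$.

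There is no real obstacle here since Lemma \ref{bessel} has already absorbed all of the orthogonality work (the almost-orthogonality of wave packets at different scales in a $1$-tree, together with their Schwartz-tail decay away from the tree top). The only thing to watch is that the exponential-type decay factor $2^{-Nk/2}$ from Lemma \ref{bessel} is strong enough to absorb the polynomial growth $2^k$ of $|\Omega_k|^{1/2}$, which is immediate by taking $N$ sufficiently large.
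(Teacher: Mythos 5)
Your proof is correct and follows essentially the same route as the paper: decompose $f$ over the shells $\Omega_k$ around the tree top, apply Minkowski and then Lemma \ref{bessel} on each shell, and use the trivial bound $\|f\one_{\Omega_k}\|_2 \lesssim \|f\|_\infty 2^k |\topp(T')|^{1/2}$ before summing the geometric series. The only cosmetic difference is that you state the reduction to arbitrary $1$-tree subtrees $T'$ explicitly, which the paper handles implicitly by proving the bound for any $1$-tree.
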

Of course we are using $f = \one _F$, which proves that here $\size (T) \lesssim 1$.
\begin{proof}
For $k\geq 1$, define 
\beqa
\Omega _0 &=& \topp (T) \\
\Omega _k &=& 2^k \topp (T) \setminus 2^{k-1} \topp (T) .
\eeqa
We need only note that for any $1$-tree $T$, by Lemma \ref{bessel}, 
\beqa
\left( \sum _{s\in T} |\langle f, \varphi _s \rangle |^2  \right) ^{1\over 2} 
&\leq &\sum _k  \left( \sum _{s\in T} |\langle \one _{\Omega _k} f, \varphi _s \rangle |^2  \right) ^{1\over 2} \\
&\lesssim & \sum _k 2^{-Nk} ||\one _{\Omega _k} f || _2 ^ 2 \\
& \lesssim & ||f||_{\infty} ^2 |\topp (T)|
\eeqa
since $|\Omega _k | \lesssim 2^{2k} |\topp (T)|$.  This proves the claim.
\end{proof}
Hence
\beqa
\sum _{\delta \geq {{|E|} \over{|F|}} } \sum_{\sigma \leq 1} \delta \sigma {{ |E|} \over {\delta }}
\lesssim |E| \log {{|F|} \over{|E|}}  .
\eeqa
Combining these two estimates proves (\ref{plarge}) since $|E|\leq |F|$.


\begin{thebibliography}{77}


\bibitem{B1} \label{B1}  Bateman, Michael.
{\it $L^p$ estimates for maximal averages along one-variable vector fields in ${\mathbf R} ^2$, }
Proc. Amer. Math. Soc. $\mathbf{137}$,  (2009), 955-963.

\bibitem{B2} \label{B2}  Bateman, Michael.
{\it  Maximal averages along a planar vector field depending on one variable, } to appear in Trans. AMS, 
preprint available at http://lanl.arxiv.org/abs/1104.2859

\bibitem{BT} \label{BT}  Bateman, Michael, and Thiele, Christoph.
{\it  $L^p$ estimates for the Hilbert transform along a one variable vector field , }
preprint available at http://www.math.ucla.edu/~thiele/papers/BTmar2011.pdf



\bibitem{LL1}  \label{LL1} Lacey, Michael, and Xiaochun Li.
{\it   Maximal Theorems for the Directional Hilbert Transform on the Plane }
Trans. Amer. Math. Soc.  358  (2006), 4099-4117.

\bibitem{LL2}  \label{LL2} Lacey, Michael, and Xiaochun Li.
{\it   On a Conjecture of EM Stein on the Hilbert Transform on Vector Fields }
Memoirs of the AMS 205  (2010), no. 965.


\bibitem{LT} \label{LT} Lacey, Michael, and Thiele, Christoph.
{\it A proof of boundedness of the Carleson operator }
Mathematical Research Letters vol. 7 no. 4, (2000), pp 361-370.


\end{thebibliography}
\end{document}